\newcounter{mycount}
\theoremstyle{plain}
\newtheorem{theorem}[mycount]{Theorem}
\newtheorem{corollary}[mycount]{Corollary}
\newtheorem{lemma}[mycount]{Lemma}
\newtheorem{proposition}[mycount]{Proposition}
\newtheorem{conjecture}[mycount]{Conjecture}
\newtheorem{remark}{Remark}
\theoremstyle{definition}
\newtheorem{definition}{Definition}
\newtheorem{fact}{Fact}
\theoremstyle{example}
\newtheorem{example}{Example}
\theoremstyle{remark}
\numberwithin{equation}{section}
\numberwithin{figure}{section}
\newcommand\des{\mathop{\rm des}}
\def\des{\mathsf{des}}
\def\ides{\mathsf{ides}}
\def\iasc{\mathsf{iasc}}
\newcommand{\asc}{\mathsf{asc}}
\def\max{\mathsf{max}}
\def\zero{\mathsf{zero}}
\def\rep{\mathsf{rep}}
\def\rmin{\mathsf{rmin}}
\def\lmax{\mathsf{lmax}}
\def\lmin{\mathsf{lmin}}
\def\rmax{\mathsf{rmax}}
\def\nasc{\mathsf{nasc}}
\def\maxx{\mathrm{max}}
\def\ealm{\mathsf{ealm}}
\def\zpair{\mathsf{zpair}}
\def\mpair{\mathsf{mpair}}
\def\zpos{\mathsf{zpos}}
\def\mpos{\mathsf{mpos}}
\def\A{\mathcal{A}}
\def\T{\CMcal{T}}
\def\I{\operatorname{{\bf I}}}
\def\CS{ \CMcal{S}}
\def\B{ \mathcal{B}}
\def\IDES{\mathrm{IDES}}
\def\DES{\mathrm{DES}}
\def\RMAX{\mathrm{RMAX}}
\def\LMAX{\mathrm{LMAX}}
\def\LMIN{\mathrm{LMIN}}
\def\DIST{\mathrm{DIST}}
\def\ASC{\mathrm{ASC}}
\def\RMIN{\mathrm{RMIN}}
\def\ZERO{\mathrm{ZERO}}
\def\MAX{\mathrm{MAX}}
\def\NASC{\mathrm{NASC}}
\def\ST{\mathrm{ST}}
\def\S{\mathfrak{S}}
\def\st{\mathsf{st}}
\def\boxit#1{\leavevmode\hbox{\vrule\vtop{\vbox{\kern.33333pt\hrule\kern1pt\hbox{\kern1pt\vbox{#1}\kern1pt}}\kern1pt\hrule}\vrule}}
\newcommand{\PATTERN}{
    \draw[step=1, xshift=14pt, yshift=14pt, \cfill, line cap=round] (0,0) grid (3,3);
    \draw[step=1, xshift=14pt, yshift=14pt, thick] (0,2) -- (3,2);
    \draw[step=1, xshift=14pt, yshift=14pt, thick] (1,0) -- (1,3);
    \foreach \x/\y in {1/1,2/3,3/2} \node[disc, fill=black] at (\x,\y) {};
}
\newcommand{\SEPATTERN}{
    \draw[step=1, xshift=14pt, yshift=14pt, \cfill, line cap=round] (0,0) grid (3,3);
    \draw[step=1, xshift=14pt, yshift=14pt, thick] (0,1) -- (3,1);
    \draw[step=1, xshift=14pt, yshift=14pt, thick] (1,0) -- (1,3);
    \foreach \x/\y in {1/2,2/3,3/1} \node[disc, fill=black] at (\x,\y) {};
}
\newcommand{\pattern}{\!\raisebox{-0.3em}{
  \begin{tikzpicture}[line width=0.7pt, scale=0.15]
    \tikzstyle{disc} = [circle,thin,draw=black, minimum size=1.7pt, inner sep=0pt ]
    \PATTERN
  \end{tikzpicture}}
}
\newcommand{\sepattern}{\!\raisebox{-0.3em}{
  \begin{tikzpicture}[line width=0.7pt, scale=0.15]
    \tikzstyle{disc} = [circle,thin,draw=black, minimum size=1.7pt, inner sep=0pt ]
    \SEPATTERN
  \end{tikzpicture}}
}
\newcommand{\cfill}{black!40}
\begin{document}

\title[Euler--Stirling statistics on ascent sequences]{A new decomposition of ascent sequences and Euler--Stirling statistics}

\author[S. Fu]{Shishuo Fu}
\address[Shishuo Fu]{College of Mathematics and Statistics, Chongqing University,  Chongqing 401331, P.R. China}
\email{fsshuo@cqu.edu.cn}

\author[E.Y. Jin]{Emma Yu Jin}
\address[Emma Yu Jin]{
Fakult\"at f\"ur Mathematik, Universit\"at Wien, 1090 Wien, Austria
}
\email{yu.jin@univie.ac.at}

\author[Z. Lin]{Zhicong Lin}
\address[Zhicong Lin]{Research Center for Mathematics and Interdisciplinary Sciences, Shandong University, Qingdao 266237, P.R. China}
\email{zhicong.lin@univie.ac.at}

\author[S.H.F. Yan]{Sherry H.F. Yan}
\address[Sherry H.F. Yan]{Department of Mathematics,
Zhejiang Normal University, Jinhua 321004, P.R. China}
\email{hfy@zjnu.cn}

\author[R.D.P. Zhou]{Robin D.P. Zhou}
\address[Robin D.P. Zhou]{College of Mathematics Physics and Information,
Shaoxing University, Shaoxing 312000, P.R. China}
\email{dapao2012@163.com}

\date{\today}

\begin{abstract}
As shown by Bousquet-M\'elou--Claesson--Dukes--Kitaev (2010),  ascent sequences can be used to encode $({\bf2+2})$-free posets. It is known that ascent sequences are enumerated by the Fishburn numbers, which appear as
the coefficients of the formal power series
$$\sum_{m=1}^{\infty}\prod_{i=1}^m (1-(1-t)^i).$$
In this paper, we present a novel way to recursively decompose ascent sequences, which leads to:
\begin{itemize}
\item a calculation of the Euler--Stirling distribution on ascent sequences, including the numbers of ascents ($\asc$), repeated entries $(\rep)$, zeros ($\zero$) and maximal entries ($\max$). In particular, this  confirms and extends Dukes and Parviainen's conjecture on the equidistribution of $\zero$ and $\max$.
\item a far-reaching generalization of the generating function formula for $(\asc,\zero)$ due to Jel\'inek. This is accomplished  via a bijective proof of the quadruple equidistribution of $(\asc,\rep,\zero,\max)$ and $(\rep,\asc,\rmin,\zero)$, where $\rmin$ denotes the right-to-left minima statistic of ascent sequences.
\item  an extension of a conjecture posed by Levande, which asserts that the pair $(\asc,\zero)$ on ascent sequences has the same distribution as the pair $(\rep,\max)$ on $({\bf2-1})$-avoiding inversion sequences.  This is achieved via a decomposition of $({\bf2-1})$-avoiding inversion sequences parallel to that of ascent sequences.
\end{itemize}

 This work is motivated by a double Eulerian equidistribution of Foata (1977) and a tempting bi-symmetry conjecture, which asserts that the  quadruples  $(\asc,\rep,\zero,\max)$ and $(\rep,\asc,\max,\zero)$ are equidistributed  on ascent sequences.
\end{abstract}

\keywords{ascent sequences, $({\bf2-1})$-avoiding inversion sequences, ascents, distinct entries, maximal entries, Fishburn numbers}

\maketitle


\section{Introduction}

Ascent sequences were introduced by Bousquet-M\'elou--Claesson--Dukes--Kitaev~\cite{bcdk}
to unify three seemingly unrelated  combinatorial structures: $({\bf2+2})$-free posets, a family of
permutations avoiding a certain pattern and Stoimenow's involutions~\cite{sto,zag}.
Many other  equinumerous objects, including Fishburn matrices~\cite{dp,je,yan}, $({\bf2-1})$-avoiding inversion sequences and non-$2$-neighbor-nesting matchings~\cite{cl,lev}, and various  interesting statistics on them~\cite{dp,dkrs,je2,kr2,kr} have been extensively investigated in these years. This paper is devoted to a systematic study of joint distributions of some classical word statistics, which we classify as {\em Eulerian} or {\em Stirling} statistics, on ascent sequences. In particular, two conjectures regarding ascent sequences due  respectively to Dukes--Parviainen~\cite{dp} and Levande~\cite{lev}  are solved. Our central contribution is the discovery of a new  decomposition of ascent sequences.

Let us first review some necessary definitions and state our original motivation. The set $\I_n$ of {\em inversion sequences}  of length $n$,
$$
\I_n:=\{s=(s_1,s_2,\ldots, s_n): 0\leq s_i<i\},
$$
serves as various kinds of codings (cf.~\cite{bv,fo,kl,leh}) for the set $\S_n$ of all permutations of $[n]:=\{1,2,\ldots,n\}$. For example, the map $\Theta:\S_n\rightarrow\I_n$ defined for $\pi=\pi_1\pi_2\cdots\pi_n\in\S_n$ as
 $$
 \Theta(\pi)=(s_1,s_2,\ldots,s_n),\quad\text{where $s_i:=\left|\{j: \text{$j<i$ and $\pi_j>\pi_i$}\}\right|$},
 $$
 is a natural coding, known as the {\em Lehmer code}, of $\S_n$. For any sequence $s\in\I_n$, let
\begin{align*}
\asc(s)&:=|\{i\in[n-1]: s_i<s_{i+1}\}|,\\
\rep(s)&:=n-\vert\{s_1,s_2,\ldots,s_n\}\vert,\\
\zero(s)&:=\vert\{i\in[n]: s_i=0\}\vert,\\
\max(s)&:=\vert\{i\in[n]:s_i=i-1\}\vert\quad\text{and}\\
\rmin(s)&:=|\{i\in[n]: s_i<s_j\text{ for all $j>i$}\}|
\end{align*}
be the numbers of {\bf asc}ents, {\bf rep}eated entries,  {\bf zero}s, {\bf maxi}mal entries (or maximal for short) and {\bf r}ight-to-left {\bf min}ima of $s$, respectively. For example, if $s=(0,1,2,0,4,2,2,7)\in\I_8$, then $\asc(s)=4$, $\rep(s)=3$, $\zero(s)=2$, $\max(s)=5$ and $\rmin(s)=3$.  These five statistics on inversion sequences can be classified into two categories as follows.

The classical {\em Eulerian polynomial} $A_n(t)$ may be defined as the descent polynomial over permutations
$A_n(t):=\sum_{\sigma\in\S_n}t^{\des(\sigma)}$, where $\des(\sigma):=|\{i\in[n-1]: \sigma_i>\sigma_{i+1}\}|$.
The  {\em signless Stirling numbers of the first kind} count permutations by their number of {\em {\bf\em l}eft-to-right {\bf\em max}ima}, namely
$$
(x)_n:=x(x+1)\cdots(x+n-1)=\sum_{\sigma\in\S_n}x^{\lmax(\sigma)},
$$
where $\lmax(\sigma):=|\{i\in[n]:\sigma_i>\sigma_j\text{ for all $j<i$}\}|$. Therefore, each statistic whose distribution  gives $A_n(t)$ (resp.~$(x)_n$) can be called an {\em Eulerian statistic} (resp.~a {\em Stirling statistic}). So $\des$ is an Eulerian statistic and $\lmax,\lmin,\rmax$ are all Stirling statistics over permutations, where  $\lmin$ and $\rmax$ are   the numbers of left-to-right minima  and right-to-left maxima, respectively. Since Lehmer code $\Theta$ transforms the quadruple $(\des,\lmax,\lmin,\rmax)$ on $\S_n$ to $(\asc,\zero,\max,\rmin)$ on $\I_n$, $\asc$ is an Eulerian statistic while $\zero,\max,\rmin$ are all Stirling statistics over inversion sequences. Interestingly, Dumont~\cite{du} showed that $\rep$ is also an Eulerian statistic over inversion sequences.

A sequence $s\in\I_n$ is called an {\em ascent sequence} if for all $2\leq i\leq n$, $s_i$ satisfies
\begin{align*}
s_i\leq \asc(s_1,s_2,\ldots, s_{i-1})+1.
\end{align*}
As shown by Bousquet-M\'elou--Claesson--Dukes--Kitaev~\cite{bcdk},
ascent sequences can be used to encode unlabeled $({\bf2+2})$-free posets (isomorphic to interval orders by the work of Fishburn~\cite{fi1,fi2}).  Both objects are proved to be enumerated by the {\em Fishburn numbers}, appearing as the sequence A022493 in the OEIS~\cite{oeis} with the elegant generating function
 \begin{equation}\label{ser:fish}
 \sum_{m\geq1}\prod_{i=1}^m(1-(1-t)^i)=t+2t^2+5t^3+15t^4+53t^5+217t^6+1014t^7+\cdots.
 \end{equation}
 Moreover, under their bijection, the word statistics $\asc$ and $\zero$ on ascent sequences  correspond to the {\em magnitude} and the number of {\em minimal} elements of $({\bf2+2})$-free posets. In this paper, any statistic whose distribution over  a member of the Fishburn family equals the distribution of $\asc$ (resp.~$\zero$) on ascent sequences is called an {\em Eulerian statistic} (resp.~a {\em Stirling statistic}). Since the formal power series~\eqref{ser:fish} is a non-D-finite series, to show that a statistic is Eulerian or Stirling in the Fishburn family may  still be hard even if  its generating function has been calculated.


Our inspiration to consider the joint distribution of $(\asc,\rep)$ on  ascent sequences stems from the fact that this pair is symmetric over inversion sequences, that is
\begin{equation}\label{inv:sym}
\sum_{s\in\I_n}
u^{\asc(s)}x^{\rep(s)}
=\sum_{s\in\I_n}
u^{\rep(s)}x^{\asc(s)},
\end{equation}
which follows from an amazing double Eulerian equidistribution due to Foata~\cite{fo}:
\begin{equation}\label{dou:fo}
\sum_{s\in\I_n}
u^{\asc(s)}x^{\rep(s)}=\sum_{\pi\in\S_n}u^{\des(\pi)}x^{\des(\pi^{-1})}.
\end{equation}
Now a natural question arises: does the symmetry in~\eqref{inv:sym} still hold when $\I_n$ is replaced by its subset $\A_n$, the set of all ascent sequences of length $n$? This leads us to the following bi-symmetric conjecture involving  double Euler--Stirling  statistics over $\A_n$.
\begin{conjecture}\label{conj1ref}
For $n\geq1$, we have the bi-symmetric quadruple equidistribution
\begin{align*}
\sum_{s\in\A_n}
u^{\asc(s)}x^{\rep(s)}z^{\zero(s)}q^{\max(s)}
=\sum_{s\in\A_n}
u^{\rep(s)}x^{\asc(s)}z^{\max(s)}q^{\zero(s)}.
\end{align*}
\end{conjecture}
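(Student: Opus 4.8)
The plan is to give a bijective proof, and the cleanest entry point is the quadruple equidistribution $(\asc,\rep,\zero,\max)\overset{d}{=}(\rep,\asc,\rmin,\zero)$ on $\A_n$ established in this paper. Granting it, Conjecture~\ref{conj1ref} becomes \emph{equivalent} to the single identity
\[
\sum_{s\in\A_n}u^{\rep(s)}x^{\asc(s)}z^{\zero(s)}q^{\max(s)}=\sum_{s\in\A_n}u^{\rep(s)}x^{\asc(s)}z^{\zero(s)}q^{\rmin(s)},
\]
obtained by comparing the right-hand side of Conjecture~\ref{conj1ref} with that equidistribution (and relabelling variables); in words, $\max$ and $\rmin$ are equidistributed on every fibre of the map $s\mapsto(\rep(s),\asc(s),\zero(s))$. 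So the first step is to record this reduction, after which the problem collapses to exhibiting \textbf{a bijection $\psi\colon\A_n\to\A_n$ that fixes the triple $(\rep,\asc,\zero)$ and satisfies $\rmin\circ\psi=\max$} --- one would hope $\psi$ can even be taken to be an involution. Composing $\psi$ with the bijection behind the $\rmin$-quadruple identity then produces the map the conjecture asks for.

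For the construction of $\psi$ I see three routes. First, one can try to work recursively through the new decomposition of $\A_n$: assuming $\psi$ on $\A_{n-1}$, extend it compatibly with the insertion rule that builds length-$n$ ascent sequences from shorter ones, carefully tracking how $\max$, $\rmin$ and the frozen triple respond to each branch of that rule. Second, one can transfer the question to a more symmetric member of the Fishburn family: to Fishburn matrices, on which $\asc$ and $\rep$ become natural size statistics, $\zero$, $\max$ and $\rmin$ are read off the extreme rows and columns, and transposition is an obvious involution waiting to be corrected; or to $({\bf2-1})$-avoiding inversion sequences, for which a decomposition parallel to the one for $\A_n$ is developed in this paper, in the hope that the $\max\leftrightarrow\rmin$ exchange is transparent there. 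Third, non-bijectively, one can take the catalytic functional equation that the new decomposition produces for the four-variable refined Fishburn series $\sum_{n\ge1}t^n\sum_{s\in\A_n}u^{\asc(s)}x^{\rep(s)}z^{\zero(s)}q^{\max(s)}$, solve it (as is in fact carried out in this paper), and then verify invariance under $(u,x,z,q)\mapsto(x,u,q,z)$.

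The main obstacle --- and the reason this is a proposal and not a proof --- is that the defining inequality $s_i\le\asc(s_1\cdots s_{i-1})+1$ is intrinsically one-sided, so every tool built on it is lopsided in precisely the direction where symmetry is wanted: the decomposition tree and its functional equation treat the pair $(\asc,\zero)$ and the pair $(\rep,\max)$ on completely different footings, and the formula obtained for the four-variable series does not \emph{visibly} satisfy the symmetry $(u,x,z,q)\mapsto(x,u,q,z)$. Producing the bijection $\psi$ above --- equivalently, a genuinely self-dual model or decomposition of the Fishburn family --- is exactly the point at which all the easy attempts stall, and I expect the conjecture to demand a structural idea beyond the decomposition introduced here; that is why it is recorded as a conjecture.
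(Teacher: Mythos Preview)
Your reduction via Theorem~\ref{bij:sym} is correct: granting that $(\asc,\rep,\zero,\max)\overset{d}{=}(\rep,\asc,\rmin,\zero)$, Conjecture~\ref{conj1ref} is equivalent to the equidistribution of $\max$ and $\rmin$ over every fibre of $(\rep,\asc,\zero)$. That reformulation is not in the paper and is a genuine contribution of your note.

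That said, you should be aware that the paper does \emph{not} prove Conjecture~\ref{conj1ref}; it is recorded there precisely as an open conjecture. The paper's own suggested attack is your third route: show directly that the explicit series of Theorem~\ref{T:gen} satisfies $G(t;x,q,u,z)=G(t;u,z,x,q)$. The authors state explicitly that they were unable to carry this out, and your diagnosis of why --- the formula is not visibly invariant under $(u,x,z,q)\mapsto(x,u,q,z)$ --- matches theirs. The ``Recent developments'' note at the end of the paper reports that this route was eventually completed in a sequel~\cite{js} using basic hypergeometric series identities, so if you want an actual proof that is where to look; neither a bijection $\psi$ of the type you describe nor a self-dual decomposition is currently known.

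In short: there is no proof in this paper for you to compare against, your proposal is an honest account of the state of play, and your reduction to a $\max\leftrightarrow\rmin$ exchange (with $(\rep,\asc,\zero)$ frozen) is a clean target that the paper does not isolate. The gap you name --- constructing $\psi$, or otherwise breaking the one-sidedness of the ascent-sequence recursion --- is exactly the gap the authors leave open.
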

This conjecture has been verified for  $n$ up to $10$ by using Maple. At the beginning, it is hard even to show that $\rep$ is Eulerian on ascent sequences.
Our first main result is a formula for the generating function $G(t;x,q,u,z)$ of ascent sequences, counted by the length (variable $t$), $\rep$ (variable $x$), $\max$ (variable $q$), $\asc$ (variable $u$) and $\zero$ (variable $z$):
\begin{align*}
G(t;x,q,u,z)=\sum_{n\geq1} \left(\sum_{s\in\A_n}
x^{\rep(s)}q^{\max(s)}u^{\asc(s)}z^{\zero(s)}\right)t^n.
\end{align*}
\begin{theorem}\label{T:gen}
The generating function $G(t;x,q,u,z)$ of ascent sequences is
\begin{align*}
G(t;x,q,u,z)&=\sum_{m\geq0}\frac{zqr x^m(1-qr)(1-r)^m(x+u-xu)}
{[x(1-u)+u(1-qr)(1-r)^m][x+u(1-x)(1-qr)(1-r)^m]}\\
&\quad\times \prod_{i=0}^{m-1}\frac{1+(zr-1)(1-qr)(1-r)^i}{x+u(1-x)(1-qr)(1-r)^i},
\end{align*}
where $r=t\,(x+u-xu)$.
\end{theorem}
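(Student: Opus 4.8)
The plan is to deduce Theorem~\ref{T:gen} from the new recursive decomposition of ascent sequences, which is the paper's central tool. The fundamental obstruction to any direct attack on
\begin{equation*}
G(t;x,q,u,z)=\sum_{n\ge1}\sum_{s\in\A_n}t^{n}x^{\rep(s)}q^{\max(s)}u^{\asc(s)}z^{\zero(s)}
\end{equation*}
is that the set of admissible values for a letter of an ascent sequence is governed by the number of ascents among the preceding letters, so a plain ``append a letter at the end'' recursion neither closes on $G$ nor even detects whether the appended letter is an ascent (that depends on the previous letter). The remedy is a catalytic variable: I would set
\begin{equation*}
F(t;x,q,u,z\,;\,v)=\sum_{n\ge1}\sum_{s\in\A_n}t^{n}x^{\rep(s)}q^{\max(s)}u^{\asc(s)}z^{\zero(s)}\,v^{\asc(s)},
\end{equation*}
so that the exponent of $v$ records the statistic controlling the recursion and $G(t;x,q,u,z)=F(t;x,q,u,z;1)$, and then show --- this is the substantive content of the decomposition --- that $F(v)$ satisfies a \emph{linear} functional equation
\begin{equation*}
F(v)=B(t;x,q,u,z\,;v)+A(t;x,q,u,z\,;v)\,F\bigl(\phi(v)\bigr)
\end{equation*}
for explicit rational $A,B$ and an explicit affine substitution $v\mapsto\phi(v)$ (possibly after eliminating an auxiliary variable recording the last letter). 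The combination $r=t\,(x+u-xu)=t\bigl(1-(1-x)(1-u)\bigr)$ should emerge here as the variable attached to a single step of the decomposition, packaging the length weight $t$ together with the $\asc$/$\rep$ weighting of the entry introduced at that step.

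Granting such an equation, the rest is essentially formal: iterating, and writing $\phi^{(i)}$ for the $i$-fold iterate of $\phi$, one gets
\begin{equation*}
F(v)=\sum_{m\ge0}B\bigl(\phi^{(m)}(v)\bigr)\prod_{i=0}^{m-1}A\bigl(\phi^{(i)}(v)\bigr),
\end{equation*}
after which one sets $v=1$ and simplifies. Because $\phi$ is affine its iterates at $v=1$ are geometric in a single base, which is exactly what produces the factors $(1-r)^{i}$ and $(1-qr)(1-r)^{i}$ in the statement, while the $\zero$- and $\max$-contributions of the starting block of the decomposition account for the factor $zqr$ and for the two bracketed factors in the denominator of the prefactor. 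As consistency checks one should recover Jel\'inek's generating function for the pair $(\asc,\zero)$ on setting $x=q=1$ (the generalization advertised in the introduction), and, on setting $x=q=z=u=1$, one should get $r=t$ and a collapse, after an index shift, to the Fishburn series~\eqref{ser:fish}; the coefficient of $t$ must equal $zq$, the weight of the unique length-one ascent sequence $(0)$.

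The hard part will be arranging the decomposition so that all four statistics $\asc,\rep,\zero,\max$ transform under control at once, and $\max$ is the delicate one: a position $i$ of an ascent sequence is maximal precisely when $(s_1,\dots,s_i)=(0,1,\dots,i-1)$, so the maximal letters always form an unbroken initial run, and the decomposition has to be built so as to peel off (or otherwise quarantine) that run without disturbing the recursion for $\asc$, $\rep$ and $\zero$. A second, more routine but still nontrivial, difficulty is the passage from the iterated solution to the compact closed form of Theorem~\ref{T:gen}: spotting the telescoping that reduces the generic product to the displayed $\prod_{i=0}^{m-1}$ and the factorization of the prefactor's denominator. Should this simplification prove unwieldy, a safer route for the write-up is to derive the functional equation from the decomposition and then \emph{verify}, by a direct (finite, if lengthy) computation, that the explicit series of the statement satisfies that equation together with the initial condition $[t^{1}]G=zq$; uniqueness of the solution then completes the proof.
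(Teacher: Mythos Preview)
Your high-level framework (catalytic variable, linear functional equation, iteration) is the right shape, but the specific choices you make do not close. First, your catalytic variable is redundant as written: with both $u^{\asc(s)}$ and $v^{\asc(s)}$ you simply have $F(v)=G(t;x,q,uv,z)$, so $v$ carries no information beyond $u$. More seriously, even if you repair this by taking the standard pair of catalytic variables (last letter and number of ascents, as in Bousquet-M\'elou et al.) for the ``append at the end'' recursion, you will not be able to track $\rep$: whether the appended letter is a repeat depends on the full multiset of values already used, not just on the last letter and the ascent count. Your outline acknowledges that $\max$ is the delicate statistic but does not confront the fact that $\rep$ is equally inaccessible to an end-append recursion.

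The paper's decomposition is built precisely to solve this. The catalytic variable is $w^{\ealm(s)}$, where $\ealm(s)$ is the entry immediately \emph{after} the initial maximal run $(0,1,\dots,p-1)$; the recursion does not append at the end but removes or modifies the entry at position $p+1$ or $p+2$, splitting $\A'=\{s:|s|>\max(s)\}$ into four cases $\CS_1,\CS_2,\CS_3,\CS_4$ according to the local pattern there. Each case has a bijection to shorter sequences (or to sequences with one more maximal) under which all four of $\asc,\rep,\zero,\max$ change in a controlled way. The resulting functional equation is also more tangled than your template $F(v)=B(v)+A(v)F(\phi(v))$: it couples $F(t;x,q,w,u,z)$ with $F(t;x,qw,1,u,z)$, $F(t;x,q,0,u,z)$ and $F(t;x,q,1,u,z)$. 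One first sets $w=0$ to eliminate the third unknown, then applies the kernel method in $w$ (choosing $w=1+q^{-1}-r$) to kill the left side, obtaining a genuine one-step linear recursion for $G(t;x,q,1,u,z)$ in the variable $q$, with substitution $q\mapsto q+1-qr$. It is the iteration of \emph{that} substitution in $q$, not in the catalytic variable, which produces the factors $(1-qr)(1-r)^i$ in the stated formula.
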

This formula is obtained by developing a new decomposition of ascent sequences. It may be possible to prove Conjecture~\ref{conj1ref} from our formula in Theorem~\ref{T:gen} by showing
$G(t;x,q,u,z)=G(t;u,z,x,q)$, although we have not succeeded so far. On the other hand, by setting $u=x=1$ in $G(t;x,q,u,z)$, we get a nice symmetric expression for the generating function of $(\zero,\max)$ on ascent sequences.
\begin{corollary}\label{zero:max}The generating function $G(t;1,q,1,z)$ of $(\zero,\max)$ on ascent sequences is
\begin{align*}
\sum_{m\geq0}qzt\prod_{i=0}^{m-1}[1-(1-zt)(1-qt)(1-t)^i].
\end{align*}
Consequently,  the pair $(\zero,\max)$ is symmetric on $\A_n$.
\end{corollary}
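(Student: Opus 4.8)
The plan is to derive Corollary~\ref{zero:max} directly from Theorem~\ref{T:gen} by specializing $x=u=1$, and then to read off the symmetry from the resulting closed form. First I would record that $r=t\,(x+u-xu)$ becomes $r=t$ when $x=u=1$. Since the identity of Theorem~\ref{T:gen} is an identity of formal power series in $t$ whose coefficients are rational functions of $x,q,u,z$, this substitution is admissible provided no denominator is thereby annihilated; this I would check at the outset, noting that $x(1-u)+u(1-qr)(1-r)^m$ specializes to $(1-qt)(1-t)^m$ and $x+u(1-x)(1-qr)(1-r)^i$ specializes to $1$, both power series with constant term $1$.

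Next I would simplify the $m$-th summand. At $x=u=1$ one has $x^m=1$, $x+u-xu=1$ and $x(1-u)=u(1-x)=0$, so the prefactor
\[
\frac{zqr\,x^m(1-qr)(1-r)^m(x+u-xu)}{[x(1-u)+u(1-qr)(1-r)^m]\,[x+u(1-x)(1-qr)(1-r)^m]}
\]
collapses to $zqt\,(1-qt)(1-t)^m\big/\bigl((1-qt)(1-t)^m\bigr)=zqt$, while each factor of the product simplifies, using $zr-1=zt-1=-(1-zt)$, to
\[
\frac{1+(zr-1)(1-qr)(1-r)^i}{x+u(1-x)(1-qr)(1-r)^i}=1-(1-zt)(1-qt)(1-t)^i .
\]
Multiplying these together yields exactly $G(t;1,q,1,z)=\sum_{m\geq0}qzt\prod_{i=0}^{m-1}[1-(1-zt)(1-qt)(1-t)^i]$, which is the asserted formula.

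Finally, the symmetry is immediate: the displayed expression is invariant under interchanging $q$ and $z$, since $qzt$ is symmetric in $q,z$ and so is every bracket $1-(1-zt)(1-qt)(1-t)^i$. Hence $G(t;1,q,1,z)=G(t;1,z,1,q)$, and comparing coefficients of $t^n$ gives $\sum_{s\in\A_n}q^{\max(s)}z^{\zero(s)}=\sum_{s\in\A_n}q^{\zero(s)}z^{\max(s)}$, i.e.\ the pair $(\zero,\max)$ is symmetric on $\A_n$ — in particular this confirms the equidistribution of $\zero$ and $\max$ conjectured by Dukes and Parviainen. The computation presents no real obstacle; the hard part, such as it is, is merely the bookkeeping verification in the first paragraph that setting $x=u=1$ does not kill a denominator in the rational-function identity of Theorem~\ref{T:gen} — and it does not, since the two denominators specialize to $(1-qt)(1-t)^m$ and $1$.
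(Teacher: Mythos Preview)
Your proof is correct and follows exactly the approach indicated in the paper, which obtains Corollary~\ref{zero:max} by setting $u=x=1$ in Theorem~\ref{T:gen}. Your write-up in fact supplies more computational detail than the paper itself, and the checks that the denominators specialize to units in the power-series ring are a nice touch.
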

\begin{remark}
The $q=1$ case of Corollary~\ref{zero:max} was first conjectured by Kitaev and Remmel~\cite{kr2}, and then proved independently by Jel\'inek~\cite{je}, Levande~\cite{lev} and Yan~\cite{yan} via the connection with Fishburn matrices or Fishburn diagrams. Our proof here is the first  direct approach based only on the decomposition of ascent sequences. The $z=1$ case implies that $\max$ is a Stirling statistic on $\A_n$, proving a conjecture by Dukes and Parviainen~\cite[Conj.~13]{dp}.
\end{remark}

Our second main result is a combinatorial bijection, combining a recent coding due to Baril and Vajnovszki~\cite{bv} and two new bijections involving two subsets of inversion sequences, on ascent sequences.

\begin{theorem}\label{bij:sym}
There is a bijection $\Upsilon:\A_n\rightarrow\A_n$ which transforms the quadruple
$$
(\asc,\rep,\zero,\max) \text{ to } (\rep,\asc,\rmin,\zero).
$$
Consequently, $(\asc,\rep)$ is symmetric on $\A_n$, $\rep$ is a new Eulerian statistic and $\rmin$ is a new Stirling statistic.
\end{theorem}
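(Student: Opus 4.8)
The plan is to realize $\Upsilon$ as a composition of three bijections
\[
\A_n \xrightarrow{\;\phi\;} \mathcal{B}_n \xrightarrow{\;\psi\;} \mathcal{C}_n \xrightarrow{\;\gamma\;} \A_n,
\]
where $\mathcal{B}_n,\mathcal{C}_n\subseteq\I_n$ are two carefully chosen families of inversion sequences, $\psi$ is the restriction of the Baril--Vajnovszki coding~\cite{bv} (whose defining feature is that it interchanges $\asc$ and $\rep$, so that it is exactly the map making the symmetry~\eqref{inv:sym} effective), and $\phi,\gamma$ are the two new bijections promised in the statement. An alternative would be to read off the equidistribution directly from Theorem~\ref{T:gen} by producing a second decomposition of $\A_n$ that governs $(\rep,\asc,\rmin,\zero)$ and obeys the same recursion; but since we want the bijection itself, I would construct the three maps explicitly.

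First I would use the new decomposition of ascent sequences underlying Theorem~\ref{T:gen} to define $\phi\colon\A_n\to\mathcal{B}_n$ so that the quadruple $(\asc,\rep,\zero,\max)$ on $\A_n$ is turned into a quadruple of ``first--kind'' word statistics on $\mathcal{B}_n$ on which the Baril--Vajnovszki machinery operates cleanly. The crucial effect of $\phi$ should be to trade the constraint-sensitive statistic $\max$ for an honest positional (or value) statistic on $\mathcal{B}_n$, while carrying $\asc$, $\rep$ and $\zero$ over essentially unchanged. Since the proof of Theorem~\ref{T:gen} already records how $\asc,\rep,\zero,\max$ evolve under each elementary step of the decomposition, checking the behaviour of $\phi$ on all four statistics reduces to matching those rules against the definition of $\mathcal{B}_n$.

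Next, applying $\psi$ swaps $\asc$ and $\rep$; here the work is to pin down the image $\mathcal{C}_n:=\psi(\mathcal{B}_n)$ and to follow what $\psi$ does to the two remaining coordinates, namely $\zero$ and the $\max$-surrogate coming out of the first step. Finally, $\gamma\colon\mathcal{C}_n\to\A_n$ is the second new bijection, tailored so that the composite $\Upsilon=\gamma\circ\psi\circ\phi$ sends $(\asc,\rep,\zero,\max)$ to $(\rep,\asc,\rmin,\zero)$. The stated consequences are then immediate: specializing the relevant variables to $1$ gives the symmetry of $(\asc,\rep)$ on $\A_n$, and since $\asc$ is Eulerian and $\zero$ is by definition the basic Stirling statistic on $\A_n$, the equidistribution exhibits $\rep$ as a new Eulerian statistic and $\rmin$ as a new Stirling statistic.

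The main obstacle is the statistic $\rmin$. Unlike $\asc$, $\rep$ and $\zero$, the number of right-to-left minima is a genuinely positional quantity, hence not transparently respected either by the Baril--Vajnovszki coding or by a decomposition-driven map; I expect the heart of the argument to be a structural lemma identifying, under $\gamma$, the $\max$-surrogate produced in the first two steps with $\rmin$ on $\A_n$. A second, more bookkeeping-heavy but still delicate point is to verify that $\mathcal{B}_n$ and $\mathcal{C}_n$ are exactly the right subsets of $\I_n$, so that the three maps genuinely compose to a bijection of $\A_n$ onto itself rather than merely an injection into $\I_n$, and that $\psi$---originally designed for permutation statistics on all of $\I_n$---interacts compatibly with the ascent-sequence restriction and with the statistic $\zero$.
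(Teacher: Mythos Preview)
Your high-level picture---three bijections with Baril--Vajnovszki in the middle---matches the paper's architecture, but the central step is mis-specified in a way that would block the construction. The Baril--Vajnovszki code $b$ is a bijection $\S_n\to\I_n$ translating permutation statistics to inversion-sequence statistics; it is not a self-map on inversion sequences, and it does not by itself interchange $\asc$ and $\rep$. In the paper the swap of the two Eulerian coordinates is carried out on the \emph{permutation} side, via the trivial involution $\pi\mapsto(\pi^{-1})^c$, which sends $(\des,\iasc,\lmin,\lmax)$ to $(\iasc,\des,\lmax,\rmax)$ and exchanges the pattern-avoidance classes $\S_n(\sepattern\,)$ and $\S_n(\pattern\,)$. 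The code $b$ then appears twice, once in each direction: it restricts to bijections $\S_n(\sepattern\,)\to\B_n$ and $\S_n(\pattern\,)\to\C_n$, and the two ``new'' bijections $\beta:\B_n\to\A_n$ and $\gamma:\C_n\to\A_n$ are modified-ascent-sequence style addition/subtraction algorithms, \emph{not} maps built from the decomposition underlying Theorem~\ref{T:gen}. So your middle map $\psi$ should really be $b\circ(\text{inverse-complement})\circ b^{-1}$, and your outer maps should not be sought in Section~\ref{sec:2} at all.

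A second point: you flag $\rmin$ as the hard coordinate, anticipating a special structural lemma. In the paper's argument it is actually the easiest. The key observation is that both $\beta$ and $\gamma$ preserve the \emph{relative order} of all entries (this is forced by the defining properties (a),(b) of $\B_n$ and (c) of $\C_n$), hence they preserve $\RMIN$ as a set-valued statistic; on the permutation side, $b$ sends $\RMAX$ to $\RMIN$ by Baril--Vajnovszki's theorem. Thus $\rmin$ is tracked automatically once the relative-order facts are in place, and the genuine work lies in verifying that $b$ lands $\S_n(\sepattern\,)$ and $\S_n(\pattern\,)$ in $\B_n$ and $\C_n$ respectively, and that the addition algorithms invert $\beta$ and $\gamma$ onto all of $\A_n$.
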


The following  formulae are immediate consequences of Theorems~\ref{T:gen} and~\ref{bij:sym}.
\begin{corollary}
The pairs $(\asc,\zero)$ and $(\rep,\max)$ are equidistributed on $\A_n$ and have the common  generating function:
\begin{align}\label{E:gfrepmaxi}
G(t;1,1,u,z)&=\sum_{m\geq0}u^m\prod_{i=0}^m \frac{1-(1-zt)(1-t)^i}{u+(1-u)(1-zt)(1-t)^i}\\
\label{E:gf2}&=\sum_{m\geq0}\frac{zt(1-t)^{m+1}}{1-u+u(1-t)^{m+1}}
\prod_{i=0}^{m-1}(1-(1-zt)(1-t)^{i+1}).
\end{align}
\end{corollary}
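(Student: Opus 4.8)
The plan is to read everything off from the two main theorems already in hand, so no new combinatorics is required. First, the equidistribution of $(\asc,\zero)$ and $(\rep,\max)$ on $\A_n$ is a one-line consequence of Theorem~\ref{bij:sym}: the bijection $\Upsilon$ sends $(\asc,\rep,\zero,\max)$ to $(\rep,\asc,\rmin,\zero)$, hence in particular carries the pair $(\rep,\max)$ to $(\asc,\zero)$, and applying $\Upsilon$ inside the sum over $\A_n$ gives $\sum_{s\in\A_n}x^{\rep(s)}q^{\max(s)}=\sum_{s\in\A_n}u^{\asc(s)}z^{\zero(s)}$ after the renaming $x\mapsto u$, $q\mapsto z$. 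Thus both pairs share the common generating function $G(t;1,1,u,z)$, and what remains is to deduce the two closed forms \eqref{E:gfrepmaxi} and \eqref{E:gf2} from the master formula $G(t;x,q,u,z)$ of Theorem~\ref{T:gen}.

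For \eqref{E:gf2} I would substitute $x=q=1$ into Theorem~\ref{T:gen}. Then $r=t(x+u-xu)$ collapses to $r=t$ and $x+u-xu=1$; the $m$-th numerator becomes $zt(1-t)^{m+1}$; the second bracket $x+u(1-x)(1-qr)(1-r)^m$ of the denominator becomes $1$; the first bracket becomes $1-u+u(1-t)^{m+1}$; and each factor $\frac{1+(zr-1)(1-qr)(1-r)^i}{x+u(1-x)(1-qr)(1-r)^i}$ of the product becomes $1-(1-zt)(1-t)^{i+1}$. Collecting these pieces reproduces \eqref{E:gf2} term by term.

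For \eqref{E:gfrepmaxi} the cleanest route is to use the equidistribution just obtained and instead compute the $(\rep,\max)$-generating function $G(t;x,q,1,1)$, then rename $(x,q)\mapsto(u,z)$. Setting $u=1$ again forces $r=t$, and after cancelling the common factor $(1-qt)(1-t)^m$ the $m$-th summand simplifies to
$$
\frac{qt\,x^m}{x+(1-x)(1-qt)(1-t)^m}\prod_{i=0}^{m-1}\frac{1-(1-qt)(1-t)^{i+1}}{x+(1-x)(1-qt)(1-t)^i}.
$$
Writing $a_i:=1-(1-qt)(1-t)^i$ and $b_i:=x+(1-x)(1-qt)(1-t)^i$, one reindexes $\prod_{i=0}^{m-1}a_{i+1}=\prod_{j=1}^{m}a_j$, notes that the scalar prefactor $qt$ is precisely $a_0$, and absorbs the isolated denominator $b_m$ into the product; the $m$-th summand thereby becomes $x^m\prod_{i=0}^{m}a_i/b_i$, which is exactly \eqref{E:gfrepmaxi} after the renaming. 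As a cross-check one could also verify \eqref{E:gfrepmaxi}$=$\eqref{E:gf2} directly, but the double-specialization argument is less error-prone.

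I do not anticipate a genuine obstacle here: the statement is a corollary precisely because both halves are mechanical. The one point that calls for care is the reindexing and telescoping in the last paragraph — recognizing that the loose factors $qt$ and $1/b_m$ fit together with $\prod_{i=0}^{m-1}$ to form the single product $\prod_{i=0}^{m}$ — together with keeping the specializations $x=q=1$ versus $u=z=1$ straight, since $G$ is not symmetric in its four parameters.
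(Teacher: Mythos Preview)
Your proposal is correct and follows exactly the route the paper intends: the corollary is stated as an ``immediate consequence of Theorems~\ref{T:gen} and~\ref{bij:sym}'' with no further proof, and you have supplied precisely the two specializations ($x=q=1$ for \eqref{E:gf2}, $u=z=1$ followed by the renaming for \eqref{E:gfrepmaxi}) and the equidistribution argument via $\Upsilon$ that make this immediate. Your reindexing step, recognizing $qt=a_0$ and absorbing $b_m$ into the product, is the only nontrivial bookkeeping, and you have it right.
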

\begin{remark}
Note that~\eqref{E:gfrepmaxi} was derived by Jel\'{i}nek via decomposing the primitive Fishburn matrices~\cite{je}, while expression~\eqref{E:gf2} appears to be new.
\end{remark}

For a sequence $s\in\I_n$, we say that $s$ is {\em $({\bf2-1})$-avoiding} if there exists {\em no}  $i<j$ such that $s_i=s_j+1$. The $({\bf2-1})$-avoiding inversion sequences were used by Claesson and Linusson~\cite{cl} to encode the non-$2$-neighbor-nesting matchings. Denote by $\T_n$  the set of all $({\bf2-1})$-avoiding inversion sequences of length $n$. Levande~\cite{lev} showed that $\max$ is  Stirling  on $\T_n$ by introducing the Fishburn diagrams. At the end of his paper, he suggested that $\rep$ is an Eulerian statistic over $\T_n$. Our third main result answers his conjecture affirmatively.

\begin{theorem}\label{main:3}
The pair $(\rep,\max)$ is equidistributed  over $\A_n$ and $\T_n$.
Consequently,
\begin{align}\label{E:rema2co}
\sum_{s\in\A_n}
u^{\asc(s)}z^{\zero(s)}=\sum_{s\in\A_n}
u^{\rep(s)}z^{\max(s)}
=\sum_{s\in\T_n}
u^{\rep(s)}z^{\max(s)}.
\end{align}
\end{theorem}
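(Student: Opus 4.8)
The plan is to establish the equidistribution of $(\rep,\max)$ over $\A_n$ and $\T_n$ by constructing, for $({\bf2-1})$-avoiding inversion sequences, a recursive decomposition that runs exactly parallel to the new decomposition of ascent sequences developed for Theorem~\ref{T:gen}. Concretely, I would first re-examine how an ascent sequence $s\in\A_n$ is built from a shorter one: the decomposition records where the new maximal and zero entries are inserted and how $\rep$ and $\asc$ change under each type of insertion, yielding a functional equation (or a family of recursions on a refined generating function) whose solution is the series in Theorem~\ref{T:gen}. The key observation to extract is the sub-system of recursions governing only the pair $(\rep,\max)$; specializing $u=x=1$ in $G$ (equivalently, passing to \eqref{E:gfrepmaxi}--\eqref{E:gf2}) gives the bivariate generating function $\sum_{s\in\A_n}u^{\rep(s)}z^{\max(s)}t^n$ in closed form.

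Next I would set up the analogous decomposition on $\T_n$. An inversion sequence $s=(s_1,\dots,s_n)$ is $({\bf2-1})$-avoiding iff no value $v+1$ occurs to the left of a value $v$; in particular the last entry $s_n$ is essentially unconstrained except by $0\le s_n<n$ and the global pattern condition, so one can hope to peel off $s_n$ — or, more cleverly, peel off entries from a well-chosen position — and track how the number of maximal entries ($s_i=i-1$) and the number of repeated entries evolve. The target is to show that the resulting recursion for $\sum_{s\in\T_n}u^{\rep(s)}z^{\max(s)}t^n$ coincides term by term with the one obtained on the ascent-sequence side. Equivalently, and perhaps more transparently, I would aim for a direct bijection $\Phi:\A_n\to\T_n$ preserving $(\rep,\max)$, built level by level from the two parallel decompositions; the bijection approach has the advantage of making the correspondence between "insertion moves" on the two families explicit and is in the spirit of Theorem~\ref{bij:sym}. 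Once $(\rep,\max)$ is matched across $\A_n$ and $\T_n$, the chain of identities in \eqref{E:rema2co} follows immediately: the first equality is Theorem~\ref{bij:sym} (via the bijection $\Upsilon$, using that $\zero$ maps to $\rmin$ is not needed here — only the $(\asc,\zero)\leftrightarrow(\rep,\max)$ part), and the second is the new result.

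The main obstacle I anticipate is isolating the correct structural recursion on $\T_n$: the $({\bf2-1})$-avoidance condition is a pattern condition "spread out" over the whole sequence, so naive removal of the last entry does not obviously interact well with the statistic $\max$, which is tied to absolute positions. The delicate point is to find the right "active site" to insert/remove at — presumably governed by the largest value present and its leftmost occurrence, mirroring how ascents control legal values in $\A_n$ — so that (i) the $({\bf2-1})$-avoiding property is maintained, (ii) $\max$ changes in a controlled way, and (iii) $\rep$ changes by $0$ or $1$ predictably. A secondary technical nuisance is bookkeeping of $\rep$: repeated entries can be created or destroyed nonlocally when a value is inserted, so one likely needs an auxiliary statistic (e.g., the number of distinct values, or the multiplicity profile of the current maximum) to close the recursion, just as the ascent-sequence decomposition needed the full quadruple refinement before specializing. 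If the bijective route proves too rigid, the fallback is the purely generating-functional one: derive the $\T_n$ recursion, solve it, and check it equals \eqref{E:gf2} — a computation, but a routine one once the recursion is in hand.
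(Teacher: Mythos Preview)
Your overall strategy is correct and is essentially what the paper does: Section~4 develops a decomposition of $\T_n$ parallel to the $\A_n$ decomposition of Section~2, introducing an auxiliary statistic $\mpair$ (and a secondary parameter $\mpos$) on $({\bf2-1})$-avoiding inversion sequences to play the role of $\ealm$, and then proves the equidistribution of the triples $(\rep,\max,\ealm)$ on $\A_n$ and $(\rep,\max,\mpair)$ on $\T_n$ by induction on $|s|-\max(s)$. So your anticipation that one needs an extra ``active site'' statistic tied to the maximals in order to close the recursion is exactly right.

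There is one genuine slip, however. You claim that the first equality in~\eqref{E:rema2co}, namely $(\asc,\zero)\sim(\rep,\max)$ on $\A_n$, follows from Theorem~\ref{bij:sym} ``via the bijection $\Upsilon$ --- only the $(\asc,\zero)\leftrightarrow(\rep,\max)$ part''. But $\Upsilon$ has no such part: it sends $(\asc,\rep,\zero,\max)$ to $(\rep,\asc,\rmin,\zero)$, so it yields $(\asc,\zero)\sim(\rep,\rmin)$ and $(\asc,\max)\sim(\rep,\zero)$, neither of which is what you need. In the paper this equidistribution is obtained in two other ways: either by a direct computation from the formula in Theorem~\ref{T:gen} (one checks that the specializations $G(t;1,1,u,z)$ and $G(t;u,z,1,1)$ both simplify to~\eqref{E:gfrepmaxi}), or by a second parallel decomposition of $\A_n$ with respect to $(\asc,\zero)$ using yet another auxiliary statistic $\zpair$ (Section~4.2). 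You should replace your appeal to $\Upsilon$ with one of these two arguments.
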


The rest of this paper is organized as follows. In Section~\ref{sec:2}, we develop a new decomposition of ascent sequences and give a proof of Theorem~\ref{T:gen}. In  Section~\ref{sec:3}, we construct the bijection $\Upsilon$ for Theorem~\ref{bij:sym}. A generalization of Theorem~\ref{main:3}, involving three new marginal statistics on ascent sequences or $({\bf2-1})$-avoiding inversion sequences, is proved in Section~\ref{sec:4}.

\section{A decomposition of ascent sequences}\label{sec:2}

This section is devoted to the proof of Theorem~\ref{T:gen}. Note that for any ascent sequence, all its maximal entries must appear exactly in the initial  strictly increasing subsequence. This particular property inspires us to consider the following parameter, which is crucial for our decomposition of ascent sequences.

\begin{definition}For a sequence $s=(s_1,\ldots,s_n)\in\A_n$ with $\max(s)=p<n$, the {\bf e}ntry {\bf a}fter the {\bf l}ast {\bf m}aximal of $s$ is denoted by $\ealm(s)$, that is, $\ealm(s)=s_{p+1}$. For example, if $s=(0,1,2,3,2,4)$, then $\ealm(s)=2$. For convenience, we set $\ealm(0,1,\ldots,n-1)=0$.
\end{definition}

Let $\A$ be the set of all ascent sequences. Denote by $|s|$ the length of a sequence $s$. Clearly, we have
$$
\sum_{\substack{s\in\A\\ \vert s \vert=\max(s)}}t^{\vert s\vert}x^{\rep(s)}q^{\max(s)}u^{\asc(s)}z^{\zero(s)}
=\sum_{n\geq1}(qt)^n\,u^{n-1}z=\frac{qtz}{1-qtu}.
$$
Let us introduce
\begin{align*}
F(t;x,q,w,u,z)&:=\sum_{\substack{s\in\A\\ \vert s \vert>\max(s)}}t^{\vert s\vert}x^{\rep(s)}q^{\max(s)}w^{\ealm(s)}u^{\asc(s)}z^{\zero(s)},\\
G(t;x,q,w,u,z)&:=qtz(1-qtu)^{-1}+F(t;x,q,w,u,z)
\end{align*}
and
$$a_p(t;x,w,u,z):=[q^p]F(t;x,q,w,u,z).$$
We will establish a functional equation for $F(t;x,q,w,u,z)$ by dividing the set
$\A':=\{s\in\A:\vert s\vert> \max(s)\}$
 into the following disjoint subsets:
 \begin{align*}
 \CMcal{S}_{1}&:=\{s\in\A': \vert s\vert=\max(s)+1\},\\
 \CMcal{S}_{2}&:=\{s\in\A'\setminus\CMcal{S}_{1}: s_{\max(s)+1}\ge s_{\max(s)+2}\},\\
 \CMcal{S}_{3}&:=\{s\in\A'\setminus\CMcal{S}_{1}: s_{\max(s)+1}< s_{\max(s)+2},\max(s)\notin\{s_i: \max(s)+2\leq i\leq |s|\}\},\\
 \CMcal{S}_{4}&:=\{s\in\A'\setminus\CMcal{S}_{1}: s_{\max(s)+1}< s_{\max(s)+2},\max(s)\in\{s_i: \max(s)+2\leq i\leq |s|\}\}.
 \end{align*}
 The main idea is to reduce the counting of ascent sequences to those either with smaller length or with longer initial strictly increasing subsequence. In the following, we deal with each of the above subsets separately.  
\begin{lemma}\label{L:case1}
The generating function for $\CS_1$ is
\begin{align}\label{eq:case1}
\sum_{s\in\CS_1}t^{\vert s\vert}x^{\rep(s)}q^{\max(s)}w^{\ealm(s)}u^{\asc(s)}z^{\zero(s)}
=\frac{qxzt^2(z+qtuw-qtuzw)}{(1-qtu)(1-qtuw)}.
\end{align}
\end{lemma}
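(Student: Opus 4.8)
The plan is to directly enumerate the elements of $\CS_1$, which are the easiest to describe: an ascent sequence $s$ of length $n+1$ whose first $n$ entries form the strictly increasing prefix $(0,1,\ldots,n-1)$ and whose last entry $s_{n+1}$ is appended. First I would fix $n=\max(s)\geq1$ (note $n\geq1$ since we need at least one maximal entry, and $|s|>\max(s)$ forces $|s|=n+1\geq2$, so $n\geq1$). For the prefix $(0,1,\ldots,n-1)$ we have $\asc=n-1$, one zero, and $\rep=0$. The appended entry $s_{n+1}$ may be any value in $\{0,1,\ldots,\asc(0,1,\ldots,n-1)+1\}=\{0,1,\ldots,n\}$ by the ascent-sequence condition. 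I would then split into cases according to the value of the last entry $j:=s_{n+1}=\ealm(s)$, tracking how each statistic changes.

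The key bookkeeping is as follows. Writing $j=\ealm(s)$: if $j=0$, the last entry contributes an extra zero ($z$), no ascent, and a repeat ($x$, since $0$ already occurs); if $1\leq j\leq n-1$, it contributes no new zero, no ascent (since $s_n=n-1\geq j$... careful: actually $s_n=n-1$ and we need $s_n$ vs $s_{n+1}=j$; since $j\leq n-1=s_n$ this is not an ascent), a repeat ($x$, since $j\in\{0,\ldots,n-1\}$ already appears), and weight $w^j$; if $j=n$, it contributes no new zero, one new ascent ($u$, since $n>n-1=s_n$), no repeat, and weight $w^n$. Meanwhile every element of $\CS_1$ carries the base weight $t^{n+1}q^n u^{n-1} z$ from the prefix. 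Summing the last-entry contribution over $j$ gives the factor
\[
z x + \sum_{j=1}^{n-1} x w^j + u w^n
= zx + xw\frac{1-w^{n-1}}{1-w} + uw^n,
\]
which I would simplify to a single rational expression in $w$. Then I sum over $n\geq1$ the quantity $t^{n+1}q^n u^{n-1} z\cdot\bigl(\text{the above}\bigr)$; this is a sum of three geometric-type series in the variable $qtu$ (for the $zx$ and $uw^n$ pieces) and in $qtuw$ (for the middle piece, after telescoping), which converge as formal power series and collapse to the stated closed form with denominator $(1-qtu)(1-qtuw)$.

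The computation is entirely routine; the only place to be careful is the boundary case $n=1$, where the ``middle'' range $1\leq j\leq n-1$ is empty and the prefix is just $(0)$ with $\asc=0$, so that the appended entry can only be $0$ or $1$ — one checks this is consistent with the general formula (the empty sum $\sum_{j=1}^{0}$ vanishes, and $u^{n-1}=u^0=1$). I expect the main, though still minor, obstacle to be simply organizing the three geometric sums and verifying that their common-denominator recombination matches the right-hand side of~\eqref{eq:case1} exactly, in particular correctly producing the numerator $z + qtuw - qtuzw$ after clearing the $(1-w)$ in the middle term against the other pieces.
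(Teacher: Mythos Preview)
Your approach is the same as the paper's, but there is a genuine error in the case analysis. You fix $n=\max(s)$ and then allow the appended entry $j=s_{n+1}$ to range over $\{0,1,\ldots,n\}$. The value $j=n$ must be excluded: if $s_{n+1}=n=(n+1)-1$, then the last position is itself maximal, so the resulting sequence $(0,1,\ldots,n-1,n)$ has $\max(s)=n+1=|s|$ and hence lies in $\{s:|s|=\max(s)\}$, not in $\A'$ (and in particular not in $\CS_1$). Equivalently, fixing $\max(s)=n$ is incompatible with $j=n$.

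Concretely, your extra term $uw^{n}$ in the per-$n$ factor contributes
\[
\sum_{n\ge1} t^{n+1}q^{n}u^{n-1}z\cdot uw^{n}
= \frac{zqt^{2}uw}{1-qtuw},
\]
which is nonzero, so your sum would not match the right-hand side of~\eqref{eq:case1}. Once you drop the $j=n$ case, the remaining two pieces (the $j=0$ term and the $1\le j\le n-1$ sum) combine exactly as you describe, and the $(1-w)$ in the middle term cancels to give the stated numerator $z+qtuw-qtuzw$; this is precisely the computation carried out in the paper's proof.
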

\begin{proof}
We assume that $\max(s)=p$. If $\vert s\vert=\max(s)+1$, then the ascent sequence $s$ must be $(0,1,2,\ldots,p-1,i)$, where $0\le i=\ealm(s)<p$. Consequently, the left-hand-side of~\eqref{eq:case1} is equal to
\begin{align*}
\sum_{p=1}^{\infty}t^{p+1}x q^p w^0 u^{p-1}z^2 +\sum_{p=2}^{\infty}t^{p+1} x q^p \left(\sum_{i=1}^{p-1}w^i\right)u^{p-1}z
=\frac{qxzt^2(z+qtuw-qtuzw)}{(1-qtu)(1-qtuw)},
\end{align*}
as desired.
\end{proof}


\begin{lemma}\label{L:case2}
The generating function for $\CS_2$ is
\begin{multline}\label{eq:case2}
 \sum_{s\in\CS_{2}}t^{\vert s\vert}x^{\rep(s)}q^{\max(s)}w^{\ealm(s)}u^{\asc(s)}z^{\zero(s)}
\\=\frac{tx}{1-w}(F(t;x,q,w,u,z)-F(t;x,qw,1,u,z))+tx(z-1)F(t;x,q,0,u,z).
\end{multline}
\end{lemma}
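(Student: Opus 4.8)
The plan is to realize $\CS_2$ via a length-reducing bijection that deletes the single entry $\ealm(s)$ sitting immediately after the initial block of maximal entries. Write $s=(0,1,\dots,p-1,a,s_{p+2},\dots,s_n)\in\CS_2$, where $p=\max(s)$ and $a=\ealm(s)=s_{p+1}$, and let $\phi(s)=(s',a)$, where $s'$ is obtained from $s$ by deleting the entry $a$ at position $p+1$. The first task is to show $s'\in\A'$ together with the correct statistic transfer. Since position $p+1$ of $s$ is not a maximal entry we have $a\le p-1$, so in $s'$ the prefix $0,1,\dots,p-1$ is still strictly increasing and the junction $s'_p=p-1\to s'_{p+1}=s_{p+2}$ is not an ascent, because $s_{p+2}\le s_{p+1}=a\le p-1$ by the defining inequality of $\CS_2$; a direct check then propagates all the remaining ascent-sequence inequalities for $s'$ from those for $s$. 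Moreover $|s'|=n-1>p$ because $s\notin\CS_1$ forces $n\ge p+2$, so $s'\in\A'$, and positions $1,\dots,p$ are still exactly the maximal ones of $s'$.

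Next I would record the statistic transfer. As $a\le p-1$, the value $a$ already occurs among $s_1,\dots,s_p$, hence the support of $s'$ equals that of $s$ and $\rep(s')=\rep(s)-1$; the junction analysis gives $\asc(s')=\asc(s)$; the initial block of maximals is unchanged, so $\max(s')=\max(s)=p$; one has $\zero(s')=\zero(s)-[a=0]$; and $\ealm(s')=s'_{p+1}=s_{p+2}$. Conversely, for any $s'\in\A'$ with $p=\max(s')$ and any integer $a$ with $\ealm(s')\le a\le p-1$, inserting $a$ immediately after position $p$ of $s'$ produces a sequence lying in $\CS_2$ (the inequality $a\ge\ealm(s')$ is precisely what places it in $\CS_2$, while $a\le p-1$ keeps the number of maximals equal to $p$), and this operation is inverse to $\phi$. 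Thus $\phi$ is a bijection from $\CS_2$ onto $\{(s',a):s'\in\A',\ \ealm(s')\le a\le\max(s')-1\}$.

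Finally I would translate the bijection into generating functions. For each fixed $s'\in\A'$, writing $b=\ealm(s')$ and $p=\max(s')$, the sequences of $\CS_2$ mapped by $\phi$ to a pair with first coordinate $s'$ contribute in total
$$tx\,t^{|s'|}x^{\rep(s')}q^{\max(s')}u^{\asc(s')}z^{\zero(s')}\sum_{a=b}^{p-1}w^{a}z^{[a=0]},$$
and the inner sum equals $\frac{w^{b}-w^{p}}{1-w}+(z-1)[b=0]$, since $z^{[a=0]}$ differs from $1$ only in the term $a=0$, which lies in the summation range exactly when $b=0$. Summing over $s'\in\A'$ and recognizing $\sum_{s'\in\A'}(\cdots)\,w^{\ealm(s')}=F(t;x,q,w,u,z)$, $\sum_{s'\in\A'}(\cdots)\,w^{\max(s')}=F(t;x,qw,1,u,z)$ (the exponent $\max(s')$ is now carried by the variable $qw$ and the $\ealm$-variable is set to $1$), and $\sum_{s'\in\A',\,\ealm(s')=0}(\cdots)=F(t;x,q,0,u,z)$, one obtains exactly the right-hand side of~\eqref{eq:case2}.

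The step demanding the most care — and the only essential use of the hypothesis $s\in\CS_2$ — is checking that the deletion neither creates nor destroys an ascent around positions $p$ and $p+1$: this rests on the inequalities $\ealm(s)\le\max(s)-1$ (valid throughout $\A'$, since $\ealm$ sits at a non-maximal position) and $s_{p+2}\le s_{p+1}$ (the definition of $\CS_2$), together with the elementary fact that once an ascent sequence has an entry strictly below its position index minus one, all later entries are too, so the maximal entries of both $s$ and $s'$ occupy precisely their first $p$ positions. Everything else is bookkeeping of the five weights.
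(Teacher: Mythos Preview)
Your proof is correct and follows essentially the same argument as the paper: both delete the entry $\ealm(s)$ at position $\max(s)+1$ to set up a bijection $\CS_2\to\{(s',a):s'\in\A',\ \ealm(s')\le a\le\max(s')-1\}$ with the same statistic transfer, and then translate this into the stated identity for $F$. Your write-up is in fact more careful than the paper's (which simply asserts the bijection ``clearly'' works) in verifying that the deleted sequence remains in $\A'$ and that $\max$ is unchanged.
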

\begin{proof}
Any sequence $s\in\CS_2\cap\A_n$ with $\max(s)=p$ and $\ealm(s)=i$ has the form
$$
s=(0,1,\ldots,p-1,i,j,s_{p+3},\ldots,s_n)
$$
for some $j\leq i\leq p-1$. By removing $i$ from $s$ we obtain
$$
\hat{s}=(0,1,\ldots,p-1,j,s_{p+3},\ldots, s_{n}).
$$
Clearly, the mapping $s\mapsto (\hat{s},i)$ is a bijection between
$$
\{s\in\CS_2\cap\A_{n}:\ealm(s)=i\}\quad\text{and}\quad\{(s,i): s\in\A_{n-1}\cap\A',  \ealm(s)\le i\le\max(s)-1\}
$$
such that
$$
\asc(s)=\asc(\hat{s}), \rep(s)=\rep(\hat{s})+1, \max(s)=\max(\hat{s}), \zero(s)=\zero(\hat{s})+\chi(i=0).
$$
Here $\chi(\mathsf{S})$ equals $1$, if the statement $\mathsf{S}$ is true; and $0$, otherwise.

Recall that $a_p(t;x,w,u,z)$ is the coefficient of $q^p$ in $F(t;x,q,w,u,z)$. By the above bijection, in order to derive the coefficient of $q^p$ in the left-hand-side of~\eqref{eq:case2}, we need to do the substitution
$$
w^j\rightarrow \sum_{i=j}^{p-1}w^i=\frac{w^j}{1-w}-\frac{w^p}{1-w}\quad \text{for  $1\le j<p$}
$$
 and
$$
w^0\rightarrow z+\sum_{i=1}^{p-1}w^i=\frac{1-w^p}{1-w}+(z-1)
$$
in the generating function $(tx)a_p(t;x,w,u,z)$. After multiplying $q^p$ and summing over all $p$,  we get the generating function for $\CS_2$:
\begin{align*}
&\quad\sum_{p=1}^{\infty}\frac{q^ptx}{1-w}a_p(t;x,w,u,z)
-\sum_{p=1}^{\infty}\frac{q^ptx}{1-w}a_p(t;x,1,u,z)w^p+\sum_{p=1}^{\infty}
tx(z-1)a_p(t;x,0,u,z)q^p\\
&=\frac{tx}{1-w}F(t;x,q,w,u,z)-\frac{tx}{1-w}F(t;x,qw,1,u,z)+tx(z-1)F(t;x,q,0,u,z),
\end{align*}
as desired.
\end{proof}


\begin{lemma}\label{L:case3a}
The generating function for $\CS_3$ is
\begin{align}
&\,\quad \sum_{s\in\CS_3}t^{\vert s\vert}x^{\rep(s)}q^{\max(s)}w^{\ealm(s)}u^{\asc(s)}z^{\zero(s)}\label{eq:case3}\\
\nonumber&=tux\left(\frac{w+z-wz}{1-w}\right)F(t;x,q,1,u,z)-\frac{tux}{1-w}F(t;x,q,w,u,z)\\
\nonumber&\quad-tux(z-1)F(t;x,q,0,u,z).
\end{align}
\end{lemma}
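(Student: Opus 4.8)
The plan is to follow the same pattern as for $\CS_1$ and $\CS_2$: build a weight‑preserving bijection that peels one entry off a member of $\CS_3$, and then convert it into the stated identity. First I would record the shape of $s\in\CS_3$ with $\max(s)=p$. Since the maximal entries of an ascent sequence fill its initial strictly increasing run, $s=(0,1,\dots,p-1,i,j,s_{p+3},\dots,s_n)$ with $i=\ealm(s)$ and $j=s_{p+2}$; combining the ascent‑sequence inequality at positions $p+1,p+2$ with the defining conditions of $\CS_3$ gives $0\le i<j\le p-1$, and moreover the value $p$ occurs nowhere in $s$.

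Next I would define the reduction $s\mapsto(\hat s,i)$: delete the entry $s_{p+1}=i$, and simultaneously subtract $1$ from every entry in positions $p+2,\dots,n$ that exceeds $p$ (none equals $p$). The deletion of $i$ kills exactly one ascent, the step $i\to j$, so every prefix‑ascent count beyond position $p+1$ drops by one, and contracting the value range by one unit exactly compensates; I would check that $\hat s\in\A'$, $\max(\hat s)=p$, $\ealm(\hat s)=j$, and $0\le i\le\ealm(\hat s)-1$. The inverse takes $\hat s\in\A'$ and $i$ with $0\le i<\ealm(\hat s)$, inserts $i$ just after the maximal prefix of $\hat s$, and adds $1$ to every tail entry that is $\ge\max(\hat s)$ — reopening a gap at $\max(\hat s)$, which is exactly what forces the output into $\CS_3$ (so $\max$ is not a tail value) rather than merely into $\CS_3\cup\CS_4$. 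This should give a bijection from $\CS_3$ onto $\{(\hat s,i):\hat s\in\A',\ 0\le i<\ealm(\hat s)\}$ along which $|s|=|\hat s|+1$, $\rep(s)=\rep(\hat s)+1$, $\max(s)=\max(\hat s)$, $\asc(s)=\asc(\hat s)+1$, $\ealm(s)=i$, and $\zero(s)=\zero(\hat s)+\chi(i=0)$.

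Then I would translate this into generating functions. Fixing $\hat s$ with $\ealm(\hat s)=j$, the total contribution of all $s$ mapping to it equals $tux$ times $\hat s$'s weight, except that the factor $w^{\ealm(\hat s)}=w^{j}$ is replaced by $\sum_{i=0}^{j-1}w^{i}z^{\chi(i=0)}=z+w+\cdots+w^{j-1}$ (and by $0$ when $j=0$). Writing $z+w+\cdots+w^{j-1}=(z-1)+\frac{1-w^{j}}{1-w}$ for $j\ge1$ and cancelling the spurious $(z-1)$ attached to the $j=0$ terms, the sum over all $\hat s\in\A'$ collapses to $\bigl((z-1)+\frac1{1-w}\bigr)F(t;x,q,1,u,z)-\frac1{1-w}F(t;x,q,w,u,z)-(z-1)F(t;x,q,0,u,z)$, and since $(z-1)+\frac1{1-w}=\frac{w+z-wz}{1-w}$, multiplying by $tux$ yields \eqref{eq:case3}.

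The hard part will be the bijection in the second step. Merely deleting $s_{p+1}$ does not work: a tail entry that was pinned to its largest legal value by the pre‑deletion ascent count can become illegal after the deletion, so the simultaneous contraction of the value range is indispensable. I would need to verify with care that contraction and re‑expansion are mutually inverse, that each preserves the ascent‑sequence condition, and that the re‑expansion lands precisely in $\CS_3$ and not just in $\CS_3\cup\CS_4$. Once that is in place, the generating‑function computation above is routine algebra.
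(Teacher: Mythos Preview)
Your proposal is correct and follows essentially the same approach as the paper's proof: the same bijection $s\mapsto(\hat s,i)$ (delete $s_{p+1}=i$ and shift tail entries exceeding $p$ down by one) onto pairs $(\hat s,i)$ with $\hat s\in\A_{n-1}$ and $0\le i<\ealm(\hat s)$, followed by the same substitution $w^{j}\mapsto z+w+\cdots+w^{j-1}$ in $(tux)a_p(t;x,w,u,z)$. Your additional remarks on why the contraction/re-expansion is needed and why the inverse lands in $\CS_3$ rather than $\CS_3\cup\CS_4$ are accurate and slightly more explicit than the paper, which simply says the bijection ``can be checked routinely.''
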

\begin{proof}
Any sequence $s\in\CS_{3}\cap\A_n$ with $\max(s)=p$ and $\ealm(s)=i$ has the form
$$
s=(0,1,\ldots,p-1,i,j,s_{p+3},\ldots,s_n)
$$
 for some $i<j<p$ and $p$ does not appear in  $(j,s_{p+3},\ldots,s_n)$. Construct $\hat{s}'$ as
$$
 \hat{s}'=(0,1,\ldots,p-1,j,s'_{p+3},\ldots,s'_n),
$$
where $s'_k=s_k-\chi(s_k>p)$ for $k\geq p+3$. It can be checked routinely that the mapping $s\mapsto (\hat{s}',i)$ establishes a bijection between
$$
\{s\in\CS_3\cap\A_{n}:\ealm(s)=i\}\quad\text{and}\quad\{(s,i): s\in\A_{n-1}, i<\ealm(s)\}
$$
 satisfying
$
\asc(s)=\asc(\hat{s}')+1$, $\rep(s)=\rep(\hat{s}')+1$, $\max(s)=\max(\hat{s}')$ and $\zero(s)=\zero(\hat{s}')+\chi(i=0)$.

In terms of the generating function, we have to do the substitution
$$
\quad w^j\rightarrow z+w+\cdots+w^{j-1}=\frac{1-w^j}{1-w}+(z-1)\quad \text{for  $1\le j<p$}
$$
and $w^0\rightarrow 0$ in $(tux)a_p(t;x,w,u,z)$. After multiplying $q^p$ and summing over all $p$,  we obtain the generating function for $\CS_3$:
\begin{align*}
&\quad\sum_{p=1}^{\infty}\frac{q^ptux}{1-w}(a_p(t;x,1,u,z)-a_p(t;x,w,u,z))\\
&\quad\quad+\sum_{p=1}^{\infty}tux(z-1)(a_p(t;x,1,u,z)-a_p(t;x,0,u,z))q^p\\
&=tux\left(\frac{w+z-wz}{1-w}\right)F(t;x,q,1,u,z)\\
&\quad\quad-\frac{tux}{1-w}F(t;x,q,w,u,z)-tux(z-1)F(t;x,q,0,u,z),
\end{align*}
as desired.
\end{proof}

The case for $\CS_4$ is a bit intricate. Let $\CMcal{P}$ be the set of all sequences $s\in\A$ such that the integer $\max(s)-1$ appears exactly once in $s$.
We treat the subset $\CS_4$ in the next two lemmas, which are also useful in proving Theorem~\ref{main:3}.

\begin{lemma}\label{L:case3}
There is a bijection $\phi:\A_n\cap\CMcal{P} \rightarrow \A_{n-1}$ that transforms the quintuple
$$
(\asc,\rep,\ealm,\max,\zero)\quad\text{to}\quad(\asc+1,\rep,\ealm,\max+1,\zero).
$$
\end{lemma}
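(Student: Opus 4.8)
The plan is to realize $\phi$ as the operation that erases a single value from the alphabet. First I would record the shape of $s=(s_1,\ldots,s_n)\in\A_n\cap\CMcal{P}$ (assume $n\ge 2$). Setting $p=\max(s)$, the fact recalled at the start of this section that all maximal entries sit inside the initial strictly increasing run forces that run to have length exactly $p$, so $s_i=i-1$ for $1\le i\le p$ and, when $p<n$, $s_{p+1}\le s_p=p-1$; since $s\in\CMcal{P}$ the value $p-1$ occurs only at position $p$, hence $s_{p+1}\le p-2$ and $s_i\ne p-1$ for every $i>p$. (In particular $p\ge 2$.) I would then define $\phi(s)$ to be the word of length $n-1$ obtained by deleting the entry $s_p=p-1$ and subtracting $1$ from each remaining entry that exceeds $p-1$, i.e.\ each entry $\ge p$; write $\hat s=\phi(s)$, so that for $i\ge p$ the entry $\hat s_i$ is the image of $s_{i+1}$.

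The hard part will be verifying that $\hat s$ is again an ascent sequence. The mechanism to isolate first is this: deleting position $p$ lowers the ascent count of every prefix of length $\ge p+1$ by exactly $1$, because $(s_{p-1},s_p)=(p-2,p-1)$ was an ascent, $(s_p,s_{p+1})$ was not (as $s_{p+1}\le p-2$), and the merged pair $(p-2,s_{p+1})$ is not; and subtracting $1$ from all entries $\ge p$ changes no comparison between consecutive entries (in a mixed pair the $\ge p$ side stays $\ge p-1$ and the other side stays $\le p-2$). Hence $\asc(\hat s_1,\ldots,\hat s_{i-1})+1=\asc(s_1,\ldots,s_i)$ for all $i\ge p$, and the required inequality at position $i\ge p$ of $\hat s$ becomes $\hat s_i\le\asc(s_1,\ldots,s_i)$: if $s_{i+1}<p$ then $s_{i+1}\le p-2<p-1\le\asc(s_1,\ldots,s_i)$, and if $s_{i+1}\ge p$ then $\hat s_i=s_{i+1}-1$ and the original bound $s_{i+1}\le\asc(s_1,\ldots,s_i)+1$ gives exactly what we want; the positions $i<p$ are immediate. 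I expect this to be the only delicate step, and it is delicate precisely because it relies on the single-occurrence hypothesis — that is exactly what pins $s_{p+1}$ and all later sub-$p$ entries down to $\le p-2$.

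Next I would track the five statistics. Positions $1,\ldots,p-1$ of $\hat s$ are still maximal whereas position $p$ now carries $s_{p+1}\le p-2$, so $\max(\hat s)=p-1$ and $\ealm(\hat s)=\hat s_p=s_{p+1}=\ealm(s)$, the subcase $n=p$ being absorbed by the convention $\ealm(0,1,\ldots,m-1)=0$; the prefix-ascent identity of the previous paragraph yields $\asc(\hat s)=\asc(s)-1$; the length decreases by $1$ and the number of distinct values also decreases by exactly $1$ (the sole entry equal to $p-1$ vanishes, while the surviving values $\le p-2$ are untouched and the injective shift $v\mapsto v-1$ carries the distinct entries $\ge p$ into $\{p-1,p,\ldots\}$), so $\rep(\hat s)=\rep(s)$; and since $p-1\ne 0$ and the shift only touches entries $\ge p\ge 2$, no entry becomes or ceases to be a zero, so $\zero(\hat s)=\zero(s)$. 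Thus $\phi$ sends the quintuple $(\asc,\rep,\ealm,\max,\zero)$ to $(\asc+1,\rep,\ealm,\max+1,\zero)$ as claimed.

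Finally I would produce the inverse $\psi:\A_{n-1}\to\A_n\cap\CMcal{P}$: given $t$ with $q:=\max(t)\ge 1$, add $1$ to every entry of $t$ that is $\ge q$ and then insert a new entry equal to $q$ immediately after position $q$. Running the forward analysis in reverse shows that $\psi(t)$ is a valid ascent sequence (the inserted pair $(q-1,q)$ raises every sufficiently long prefix ascent count by $1$, which pays exactly for the entries that were incremented), that the value $q=\max(\psi(t))-1$ occurs only at the inserted slot (every other entry is $<q$ or $\ge q+1$), and that $\psi\circ\phi$ and $\phi\circ\psi$ are identities; all of this is bookkeeping once the forward direction is settled, and a direct check of the small case finishes the proof.
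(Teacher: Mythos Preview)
Your construction is exactly the paper's: with their convention $\max(s)=p+1$, they delete the entry $s_{p+1}=p$ and replace each later $s_k$ by $s_k-\chi(s_k>p)$, which is precisely your ``erase the value $\max(s)-1$ and shift everything above it down by one'' (up to the relabelling $p_{\text{paper}}+1=p_{\text{student}}$). The paper dispatches the verification with a single ``it is easy to check'', whereas you spell out the prefix-ascent bookkeeping, the statistic tracking, and the explicit inverse; all of those details are correct, so your proof is a fully fleshed-out version of the same argument.
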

\begin{proof}
Set $\phi(0,1,\ldots,n-1)=(0,1,\ldots,n-2)$. Any sequence $s\in(\A_n\cap\CMcal{P})\setminus\{(0,1,\ldots,n-1)\}$ with $\max(s)=p+1$ and $\ealm(s)=i$ has the form
$$
s=(0,1,\ldots,p-1,p,i,s_{p+3},\ldots,s_n),
$$
 where $p$ does not appear in  $(i,s_{p+3},\ldots,s_n)$. Define
 $$
 \phi(s)=(0,1,\ldots,p-1,i,s'_{p+3},\ldots,s'_n),
 $$
 where $s'_k=s_k-\chi(s_k>p)$ for $k\geq p+3$. It is easy to check that $\phi$ is a bijection with the desired properties. 
\end{proof}

\begin{lemma}\label{L:c3}
Let  $\CMcal{P}^c=\A\setminus\CMcal{P}$. There is a bijection
$$
\xi: \{s\in\A_n\cap\CS_4:\ealm(s)=i\} \rightarrow \{(s,i): s\in\A_n\cap\CMcal{P}^c,i<\ealm(s)\}
$$
such that if $\xi(s)=(s^*,i)$,
then $\asc(s)=\asc(s^*)$, $\rep(s)=\rep(s^*)$, $\max(s)=\max(s^*)-1$ and $\zero(s)=\zero(s^*)+\chi(i=0)$.
Furthermore, the generating function for $\CS_4$ is
\begin{align}
&\,\quad \sum_{s\in\CMcal{S}_4}t^{\vert s\vert}x^{\mathrm{rep}(s)}q^{\max(s)}w^{\ealm(s)}u^{\mathrm{asc}(s)}z^{\mathrm{zero}(s)}\label{eq:case4}\\
\nonumber&=\frac{(1-qut)(w+z-wz)}{q(1-w)}F(t;x,q,1,u,z)-\frac{1-qut}{q(1-w)}F(t;x,q,w,u,z)\\
\nonumber&\quad -\biggl(\frac{1}{q}-ut\biggr)(z-1)F(t;x,q,0,u,z).
\end{align}
\end{lemma}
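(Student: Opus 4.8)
The plan is to prove the bijection $\xi$ first and then derive the generating function for $\CS_4$ as a formal consequence, exactly paralleling the arguments already used for $\CS_2$ and $\CS_3$. Recall that a sequence $s\in\CS_4\cap\A_n$ with $\max(s)=p$ and $\ealm(s)=i$ has the form $s=(0,1,\ldots,p-1,i,j,s_{p+3},\ldots,s_n)$ with $i<j<p$ and with $p$ \emph{appearing} somewhere among $(j,s_{p+3},\ldots,s_n)$. The key structural observation is this: since $s$ is an ascent sequence and the entry $p$ occurs later, the prefix $(0,1,\ldots,p-1,i)$ together with the ascent at position $p+1$ (because $i<j$) makes $p$ a legal value from position $p+2$ on; so ``promoting'' the initial block by prepending nothing but instead \emph{raising the whole tail} cannot be the move. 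Instead, the natural inverse of the $\CS_3$ construction is to \emph{insert} a new maximal entry: given $(s^*,i)$ with $s^*\in\A_n\cap\CMcal{P}^c$ and $i<\ealm(s^*)$, write $\max(s^*)=p+1$ (note $s^*$ has length $n$, not $n-1$), locate the initial increasing run $(0,1,\ldots,p)$ of $s^*$, and form $s=\xi^{-1}(s^*,i)=(0,1,\ldots,p-1,i,s^*_{p+2}+\chi(s^*_{p+2}\ge p),\ldots)$ — that is, reinsert $i$ right after $p-1$, drop one maximal entry, and shift up by one those later entries that are $\ge p$ so that the value $p$ (which must reappear since $s^*\notin\CMcal{P}$, i.e.\ $p$ occurs at least twice in $s^*$) survives in the tail. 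I would check directly that this is well-defined, lands in $\CS_4$, is inverse to an explicit forward map $\xi$, and has the claimed effect on $(\asc,\rep,\max,\zero)$: length is preserved, one ascent is lost when we delete a maximal but one is gained at the newly created ascent $p-1<\cdots$, no wait — more carefully, deleting the top of the initial run and inserting $i$ changes $\asc$ by $0$ overall (lose the ascent into the old maximum, gain the ascent out of $i$), $\rep$ is unchanged because $p$ still appears the same number of times after the shift minus the removed copy, $\max$ drops by exactly one, and $\zero$ picks up $\chi(i=0)$ from the reinserted $i$.

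Once $\xi$ is in hand, the generating function identity follows by the same bookkeeping as in Lemmas~\ref{L:case2} and~\ref{L:case3a}. Writing $a_p=a_p(t;x,w,u,z)=[q^p]F$, the bijection says that summing over $s\in\CS_4$ with $\max(s)=p$ amounts to taking the generating function over $\CMcal{P}^c$-sequences of $\max$-value $p+1$ and, for each such sequence $s^*$ with $\ealm(s^*)=j$, summing $w^i$ over $0\le i<j$ with a $z$-weight replacing the $w^0$ term: concretely one performs the substitution $w^j\mapsto z+w+\cdots+w^{j-1}=\frac{1-w^j}{1-w}+(z-1)$ for $1\le j<p$ and $w^0\mapsto 0$ in the generating function for $\CMcal{P}^c$-sequences, with a compensating factor accounting for the length being unchanged (hence a factor $q^{-1}$ converting ``$\max=p+1$'' into ``$\max=p$'', rather than the factor $tx$ seen in the $\CS_2$ case). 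The generating function for $\CMcal{P}^c$ itself is obtained from Lemma~\ref{L:case3}: since $\phi:\A_n\cap\CMcal{P}\to\A_{n-1}$ bijects the $\CMcal{P}$-part of length $n$ with \emph{all} ascent sequences of length $n-1$, shifting $(\asc,\max)$ each up by one, the generating function for $\CMcal{P}$-sequences is $qut\,F(t;x,q,w,u,z)$ (plus the pure-increasing term, which is handled by $G$), and therefore the generating function for $\CMcal{P}^c$-sequences with $\max>\max(\cdot)$ trivially is $F-qut\,F=(1-qut)F$. Feeding this through the substitution above and multiplying by $q^{-1}$ yields precisely the three terms on the right of~\eqref{eq:case4}, with the coefficient $\frac{1-qut}{q(1-w)}$ on the $F(t;x,q,w,u,z)$ and $F(t;x,q,1,u,z)$ pieces and $\bigl(\frac1q-ut\bigr)(z-1)$ on the $F(t;x,q,0,u,z)$ piece.

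The main obstacle I anticipate is making the bijection $\xi$ \emph{precise and provably correct}, in particular verifying that the forward map genuinely lands in $\CMcal{P}^c$ (one must argue that after inserting the new maximal entry and shifting, the value $p$ really does occur at least twice — equivalently that some later entry of the original $s\in\CS_4$ equals $p$ \emph{and} survives the re-indexing — and simultaneously that $\max(s)-1=p-1$ occurs exactly... no: that it does \emph{not} become a $\CMcal{P}$-sequence), and conversely that $\xi^{-1}$ applied to a $\CMcal{P}^c$-sequence produces a sequence all of whose maximal entries still sit in the initial increasing run and satisfy $s_{\max(s)+1}<s_{\max(s)+2}$ with $\max(s)$ reappearing later — i.e.\ genuinely lands in $\CS_4$ and not in $\CS_1\cup\CS_2\cup\CS_3$. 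The ascent-sequence legality condition $s_k\le\asc(s_1,\ldots,s_{k-1})+1$ must be rechecked at every position affected by the shift; this is the kind of ``routine but delicate'' verification that is easy to state and slightly fiddly to nail down. Everything after that — the substitution into generating functions and the algebra producing~\eqref{eq:case4} — is mechanical and mirrors Lemmas~\ref{L:case2}–\ref{L:case3a} line for line.
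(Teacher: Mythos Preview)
Your generating-function bookkeeping is correct and is exactly what the paper does: once one knows from Lemma~\ref{L:case3} that the length generating function for $\CMcal{P}\cap\A'$ is $qut\,F$, hence that for $\CMcal{P}^c$ it is $(1-qut)F$, applying the substitution $w^j\mapsto\frac{1-w^j}{1-w}+(z-1)$ for $j\ge1$ and $w^0\mapsto0$, and then multiplying by $q^{-1}$ to pass from $\max=p+1$ to $\max=p$, reproduces the right-hand side of~\eqref{eq:case4}. Your packaging of this step is in fact slightly slicker than the paper's coefficient-by-coefficient computation with $a_{p+1}-tu\,a_p$.

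The bijection, however, is wrong as you have written it. The shift you propose is both unnecessary and fatal. If you take $s^*=(0,1,\ldots,p,s^*_{p+2},\ldots,s^*_n)\in\CMcal{P}^c$, replace the entry $p$ at position $p+1$ by $i$, and then add $\chi(s^*_k\ge p)$ to each later entry, then no entry of the resulting sequence can equal $p$: any later $s^*_k=p$ is pushed to $p+1$, and any $s^*_k<p$ stays below $p$. Hence $p\notin\{s_{p+2},\ldots,s_n\}$ and the output lands in $\CS_3$, not in $\CS_4$. Your own parenthetical ``so that the value $p$ \ldots\ survives in the tail'' is exactly what the shift destroys.

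The correct (and much simpler) map, which the paper uses, is: given $s=(0,1,\ldots,p-1,i,j,s_{p+3},\ldots,s_n)\in\CS_4$, set
\[
s^*=(0,1,\ldots,p-1,p,j,s_{p+3},\ldots,s_n),
\]
i.e.\ replace $i$ by $p$ at position $p+1$ and \emph{do nothing else}. This is a valid ascent sequence because at positions $p$ and $p+1$ the pair $(p-1,i)$ contributes no ascent while $(i,j)$ contributes one, and the pair $(p-1,p)$ contributes one ascent while $(p,j)$ contributes none (since $j\le p$); so $\asc(s_1,\ldots,s_{k-1})=\asc(s^*_1,\ldots,s^*_{k-1})$ for every $k\ge p+3$ and the ascent bound is preserved entry by entry. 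Since $p$ appears in the tail of $s$ by the definition of $\CS_4$, the value $\max(s^*)-1=p$ occurs at least twice in $s^*$, so $s^*\in\CMcal{P}^c$ with $\ealm(s^*)=j>i$. The inverse is equally simple: replace $p$ at position $p+1$ by $i$. The effect on $(\asc,\rep,\max,\zero)$ is then immediate, and there is nothing delicate to verify. The shift you imported is the mechanism from the $\CS_3$ case, where it compensates for the \emph{absence} of $p$ in the tail; in $\CS_4$ that absence does not occur, and no compensation is needed.
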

\begin{proof}
Any sequence  $s\in \A_n\cap\CMcal{S}_4$ with $\max(s)=p$ and $\ealm(s)=i$ has the form
$$
s=(0,1,2,\ldots,p-1,i,j,s_{p+3},\ldots, s_{n})
$$
where $i<j\le p$ and the subsequence $(j,s_{p+3},\ldots,s_n)$ contains $p$.
Define  $\xi(s)=(s^*,i)$, where
$$s^*:=(0,1,2,\ldots,p-1,p,j,s_{p+3},\ldots, s_{n})\in \A_n\cap\CMcal{P}^c.
$$ It is easy to verify that $\xi$ is reversible and has the required properties.

Since $\CMcal{P}^c=\A\setminus\CMcal{P}$,  it follows from Lemma~\ref{L:case3} that  the length generating function  for $\{s\in\CMcal{P}^c: \max(s)=p+1\}$ by the weight $(\rep,\ealm,\asc,\zero)$ is
\begin{equation}\label{gen:pc}
a_{p+1}(t;x,w,u,z)-(tu)a_p(t;x,w,u,z).
\end{equation}
In view  of the bijection $\xi$, in order to get the coefficient of $q^p$ in the left-hand-side of~\eqref{eq:case4}, we need to do the substitution
$$
\quad w^j\rightarrow z+w+\cdots+w^{j-1}=\frac{1-w^j}{1-w}+(z-1)\quad \text{for  $1\le j\le p$}
$$
and $w^0\rightarrow 0$
in~\eqref{gen:pc}.
After multiplying $q^p$ and summing over all $p$,  we get the generating function for $\CS_4$:
\begin{align*}
&\quad \sum_{p=1}^{\infty}\left[\frac{a_{p+1}(t;x,1,u,z)}{1-w}-\frac{a_{p+1}(t;x,w,u,z)}{1-w}
+(z-1)(a_{p+1}(t;x,1,u,z)-a_{p+1}(t;x,0,u,z))\right]q^p\\
&\quad -\sum_{p=1}^{\infty}\left[\frac{uta_p(t;x,1,u,z)}{1-w}
-\frac{uta_p(t;x,w,u,z)}{1-w}+ut(z-1)(a_{p}(t;x,1,u,z)-a_{p}(t;x,0,u,z))\right]q^p\\
&=\frac{(1-qut)(w+z-wz)}{q(1-w)}F(t;x,q,1,u,z)-\frac{1-qut}{q(1-w)}F(t;x,q,w,u,z)\\
&\quad -\biggl(\frac{1}{q}-ut\biggr)(z-1)F(t;x,q,0,u,z),
\end{align*}
as desired.
\end{proof}
We are now in a position to complete the proof of Theorem~\ref{T:gen}. Combining equalities~\eqref{eq:case1}, \eqref{eq:case2}, \eqref{eq:case3} and~\eqref{eq:case4} we find that
\begin{align*}
\left(1-\frac{qt(x+u-ux)-1}{q(1-w)}\right)F(t;x,q,w,u,z)
&=\frac{qxzt^2(z+qtuw-qtuzw)}{(1-qtu)(1-qtuw)}\\
&\quad-\frac{tx}{1-w}F(t;x,qw,1,u,z)\\
&\quad+(z-1)(t(x+u-xu)-q^{-1})F(t;x,q,0,u,z)\\
&\quad+\frac{w+z-wz}{1-w}(uxt+q^{-1}-ut)F(t;x,q,1,u,z),
\end{align*}
which is equivalent to
\begin{align}\label{E:G2}
\nonumber\left(1-\frac{qt(x+u-ux)-1}{q(1-w)}\right)G(t;x,q,w,u,z)
\nonumber&=\frac{zqt}{1-qtu}-\frac{tx}{1-w}G(t;x,qw,1,u,z)\\
\nonumber&\quad+(z-1)(t(x+u-xu)-q^{-1})G(t;x,q,0,u,z)\\
&\quad+\frac{w+z-wz}{1-w}(uxt+q^{-1}-ut)G(t;x,q,1,u,z).
\end{align}
By setting $w=0$ and $r=t(x+u-xu)$, we find that
\begin{align*}
\left(1-r+q^{-1}\right)G(t;x,q,0,u,z)
&=\frac{zqt}{1-qtu}+(z-1)(r-q^{-1})G(t;x,q,0,u,z)\\
&\quad+z(uxt+q^{-1}-ut)G(t;x,q,1,u,z),
\end{align*}
which leads to
\begin{align}\label{E:Gw0}
G(t;x,q,0,u,z)=\frac{zqt}{(1-qtu)(1-zr+zq^{-1})}
+\frac{ztx-zr+zq^{-1}}{1-zr+zq^{-1}}G(t;x,q,1,u,z).
\end{align}
Plugging~\eqref{E:Gw0} into~\eqref{E:G2} we get
\begin{align*}
\left(1-\frac{qr-1}{q(1-w)}\right)&G(t;x,q,w,u,z)
=\frac{zqt}{(1-qtu)}\frac{(1-r+q^{-1})}{(1-zr+zq^{-1})}-\frac{tx}{1-w}G(t;x,qw,1,u,z)\\
&+\left(\frac{w+z-wz}{1-w}+\frac{z(z-1)(qr-1)}{q-qzr+z}\right)(tx-r+q^{-1})G(t;x,q,1,u,z).
\end{align*}
We use the kernel method to solve this equation. We set $w=1+q^{-1}-r$ to make the left-hand-side become zero  and as a result it can be further simplified into
\begin{align*}
G(t;x,q,1,u,z)&=\frac{zt}{(1-qtu)}\frac{1-qr}{(tx-r+q^{-1})}
+\frac{tx}{(tx-r+q^{-1})}\frac{1-zr+zq^{-1}}{1-r+q^{-1}}G(t;x,qw,1,u,z).
\end{align*}
Next we define $\delta_m=r^{-1}-r^{-1}(1-qr)(1-r)^m$ and clearly $\delta_1=q+1-qr$. We further define that
\begin{align*}
\mu_m&=\frac{z(1-qr)(1-r)^m(x+u-xu)[1-(1-qr)(1-r)^m]}
{[x(1-u)+u(1-qr)(1-r)^m][x+u(1-x)(1-qr)(1-r)^m]},\\
\nu_m&=\frac{x[1-(1-qr)(1-r)^m]}{[x+u(1-x)(1-qr)(1-r)^m]}
\frac{[1+(zr-1)(1-qr)(1-r)^m]}{[1-(1-qr)(1-r)^{m+1}]}.
\end{align*}
By iterating the previous equation, we have
\begin{align*}
&\quad G(t;x,q,1,u,z)=\sum_{m=0}^n\mu_m\prod_{i=0}^{m-1}\nu_i+\prod_{i=0}^{n}\nu_i \,G(t;x,\delta_{m+1},1,u,z).
\end{align*}
Consequently, as a power series in $t$, we conclude that $G(t;x,q,1,u,z)=\sum_{m=0}^{\infty}\mu_m\prod_{i=0}^{m-1}\nu_i$, which is
\begin{align*}
G(t;x,q,1,u,z)&=\sum_{m=0}^{\infty}\frac{x^m z(1-qr)(1-r)^m(x+u-xu)[1-(1-qr)(1-r)^m]}
{[x(1-u)+u(1-qr)(1-r)^m][x+u(1-x)(1-qr)(1-r)^m]}\\
&\quad\times \prod_{i=0}^{m-1}\frac{[1-(1-qr)(1-r)^i]}{[x+u(1-x)(1-qr)(1-r)^i]}
\frac{[1+(zr-1)(1-qr)(1-r)^i]}{[1-(1-qr)(1-r)^{i+1}]}\\
&=\sum_{m=0}^{\infty}\frac{zqr x^m(1-qr)(1-r)^m(x+u-xu)}
{[x(1-u)+u(1-qr)(1-r)^m][x+u(1-x)(1-qr)(1-r)^m]}\\
&\quad\times \prod_{i=0}^{m-1}\frac{1+(zr-1)(1-qr)(1-r)^i}{x+u(1-x)(1-qr)(1-r)^i},
\end{align*}
where $r=t\,(x+u-xu)$. The proof of Theorem~\ref{T:gen} is complete.



\section{The construction of $\Upsilon$} \label{sec:3}

Recently, restricted versions of Foata's double Eulerian equidistribution~\eqref{dou:fo} have been found by Kim and Lin~\cite{kl,kl2} for restricted permutations  enumerated by the {\em Schr\"oder numbers} and the {\em Euler numbers}. In this section, we will prove two restricted versions of~\eqref{dou:fo} for the Fishburn numbers involving two families of permutations avoiding generalized patterns (known as bivincular patterns) introduced in~\cite{bcdk}.

We say that a permutation $\pi\in\S_n$ contains the pattern \mbox{\sepattern} if there is a subsequence
$\pi_i\pi_{i+1}\pi_j$  of $\pi$ satisfying that $\pi_i-1=\pi_j $ and $\pi_i<\pi_{i+1}$,
otherwise we say that $\pi$ avoids the pattern  \mbox{\sepattern}.
For example, the permutation $1{\bf35}4{\bf2}$ contains the pattern \mbox{\sepattern},
while the permutation $31524$ avoids it.
Let
 $\S_n(\sepattern\,)$ be the set of
$(\sepattern\,)$-avoiding permutations in $\S_n$.
It was shown in \cite{bcdk} that $\S_n(\sepattern\,)$ is in bijection with $\A_n$ and thus  is enumerated by the $n$-th Fishburn number.

Let us recall some set-valued statistics on permutations and inversion sequences introduced in~\cite{bv,kl}. Let $\pi\in\S_n$ be a permutation. Define the positions of {\em {\bf\em des}cents} and  {\em {\bf\em i}nverse {\bf\em des}cents} of $\pi$ by
$$
\DES(\pi):=\{i\in[n-1]: \pi_i>\pi_{i+1}\}\quad\text{and}
$$
$$
\IDES(\pi):=\{2\leq i\leq n: \pi_i+1\text{ appears to the left of $\pi_i$}\}.
$$
The positions of {\em {\bf\em l}eft-to-right {\bf\em max}ima} of $\pi$ is $\LMAX(\pi):=\{i\in[n]:\pi_i>\pi_j\,\, \text{for all $1\leq j<i$}\}$. Similarly, we can define the  positions of {\em{\bf\em l}eft-to-right {\bf\em min}ima} $\LMIN(\pi)$ and  the positions of {\em {\bf\em r}ight-to-left {\bf\em max}ima} $\RMAX(\pi)$  of $\pi$.
Let $s\in\I_n$ be an inversion sequence. The positions of {\em {\bf\em asc}ents} and the {\em last occurrence of {\bf\em  dist}inct positive entries} of $s$ are
$$
\ASC(s):=\{i\in[n-1]: s_i<s_{i+1}\}\quad\text{and}
$$
$$
\DIST(s):=\{2\leq i\leq n: s_i\neq0\text{ and $s_i\neq s_j$ for all $j>i$}\}.
$$
The positions of {\em {\bf\em zero}s} in $s$ is $\ZERO(s):=\{i\in[n]: s_i=0\}$. The positions of the {\em entries of $s$ that achieve {\bf\em max}imum} is $\MAX(s):=\{i\in[n]: s_i=i-1\}$ and the positions of  {\em {\bf\em r}ight-to-left {\bf\em min}ima}  of $s$ is $\RMIN(s):=\{i\in[n]: s_i<s_j\text{ for all $j>i$}\}$. For convenience, we will use the convention that if the upper case ``$\ST$'' is a set-valued statistic, then the lower case ``$\st$'' is the corresponding numerical statistic.
Our first restricted double Eulerian equidistribution, involving the above set-valued statistics, is the following.

 \begin{theorem}\label{mainth}
 There is  a bijection $\Psi: \S_n(\sepattern\,)\rightarrow\mathcal{A}_n$  such  that  for any $\pi\in\S_n(\sepattern\,)$,
 $$
 (\DES, \IDES, \LMIN,  \LMAX, \RMAX)\pi=(\ASC, \DIST, \MAX,  \ZERO, \RMIN)\Psi(\pi).
 $$
 \end{theorem}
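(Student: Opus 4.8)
The plan is to realize $\Psi$ as the restriction of a coding of all of $\S_n$. Let $\Gamma:\S_n\to\I_n$ be the recent coding of Baril and Vajnovszki~\cite{bv}; it refines Foata's double Eulerian equidistribution to the level of set-valued statistics, and in particular $(\DES,\IDES)\,\pi=(\ASC,\DIST)\,\Gamma(\pi)$ for every $\pi\in\S_n$. I intend to show that $\Gamma$ carries the pattern class $\S_n(\sepattern\,)$ exactly onto $\A_n$ while transporting all five statistics, and then to set $\Psi:=\Gamma|_{\S_n(\sepattern\,)}$. The argument splits into three parts: (i) record how $\Gamma$ transports $\DES,\IDES,\LMIN,\LMAX,\RMAX$; (ii) prove $\Gamma(\S_n(\sepattern\,))=\A_n$; (iii) assemble.

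For (i), the identity on $(\DES,\IDES)$ is the content of~\cite{bv}, so only $\LMIN\mapsto\MAX$, $\LMAX\mapsto\ZERO$ and $\RMAX\mapsto\RMIN$ remain. Writing out the letter-by-letter rule that produces $\Gamma(\pi)$ (or, equivalently, the rule reconstructing $\pi$ from $s$), each of these should fall to a short induction on $n$: a fresh left-to-right maximum of $\pi$ at position $i$ should force the $i$-th letter of $\Gamma(\pi)$ to equal $0$, a fresh left-to-right minimum should force it to equal $i-1$, and the right-to-left maxima of $\pi$ should match the right-to-left minima of $\Gamma(\pi)$. I expect this part to be routine, and some of it may already be implicit in~\cite{bv}.

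Step (ii) is the heart of the matter. Since $\Gamma$ is injective and $|\S_n(\sepattern\,)|=|\A_n|$ (both equal the $n$-th Fishburn number, by~\cite{bcdk}), it is enough to prove one inclusion: if $\pi$ avoids $\sepattern$, then $s:=\Gamma(\pi)$ is an ascent sequence. Suppose not, and let $b$ be the least index with $s_b\ge\asc(s_1,\dots,s_{b-1})+2$. By (i) this says $s_b\ge d+2$, where $d=|\DES(\pi)\cap[b-2]|$ is the number of descents of $\pi$ strictly before position $b$; thus $s_b$ overshoots by at least one the sharp bound $d+1$ imposed by the ascent-sequence condition. The task is then to unwind the definition of $\Gamma$ and show that $s_b\ge d+2$ is possible only if there are positions $a$, $a+1$, $b$ of $\pi$ with $a+1<b$, $\pi_a<\pi_{a+1}$ and $\pi_b=\pi_a-1$ --- that is, an occurrence of $\sepattern$ --- contradicting avoidance. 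Running the same computation backwards shows the converse, so $\Gamma$ matches $\S_n(\sepattern\,)$ with $\A_n$ cell for cell; but the single inclusion together with the cardinality count already completes (ii).

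Granting (i) and (ii), part (iii) is immediate: $\Psi:=\Gamma|_{\S_n(\sepattern\,)}$ is a bijection onto $\A_n$, and the five set-valued identities are inherited from $\Gamma$ on all of $\S_n$. I expect (ii) to be the genuine obstacle. The pattern $\sepattern$ is bivincular --- it demands that $a+1$ immediately follow $a$ among the positions and that $\pi_b$ be exactly one less than $\pi_a$ among the values --- so the translation between ``$s_b$ too large'' and ``$\pi$ contains $\sepattern$'' must be made on the nose, and this rigidity only becomes visible once the action of $\Gamma$ on the as-yet-unplaced values and their positions is fully understood. If that calculation turns out to be too delicate, a self-contained fallback is to build $\Psi$ directly by induction on $n$, inserting and removing the largest letter and verifying in one induction both the pattern-class correspondence and the preservation of all five statistics; but the route through $\Gamma$ is shorter and keeps the result anchored to~\cite{bv}.
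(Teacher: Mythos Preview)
Your plan founders at step~(ii): the Baril--Vajnovszki code $b$ does \emph{not} send $\S_n(\sepattern\,)$ onto $\A_n$. A concrete counterexample is $\pi=61832547\in\S_8(\sepattern\,)$ (the three consecutive ascents are $1\,8$, $2\,5$, $4\,7$, and none of $0,1,3$ appears to their right), for which $b(\pi)=(0,1,0,2,3,2,5,1)$. At position $7$ one has $s_7=5$ while $\asc(0,1,0,2,3,2)+1=4$, so $b(\pi)\notin\A_8$. Thus the inclusion $b(\S_n(\sepattern\,))\subseteq\A_n$ you hope to establish is simply false, and the heuristic ``$s_b\ge d+2$ forces an occurrence of $\sepattern$'' cannot be made to work: here $s_7=5\ge d+2=5$ with $d=3$ descents before position~$7$, yet $\pi$ avoids the pattern.

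What actually happens is that $b$ carries $\S_n(\sepattern\,)$ onto a \emph{different} subset $\B_n\subset\I_n$, characterized by two local conditions relating non-ascent positions to the occurrence of certain values later on (for instance $(0,0,0,2)\in\B_4\setminus\A_4$ while $(0,0,0,1)\in\A_4\setminus\B_4$). One then needs a second bijection $\beta:\B_n\to\A_n$, built by scanning the non-ascent positions from right to left and decrementing each later entry that exceeds a certain threshold; this $\beta$ preserves the full quintuple $(\ASC,\DIST,\MAX,\ZERO,\RMIN)$ because it preserves the relative order of all entries. The correct $\Psi$ is the composite $\beta\circ b$, not the restriction of $b$ alone. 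Your part~(i) is fine---indeed all five set-valued identities are already in~\cite{bv}---but without the intermediate class $\B_n$ and the straightening map $\beta$, the argument cannot be completed.
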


The bijection $\Psi$ will be a combination of a recent coding $b: \S_n\rightarrow\I_n$ due to Baril and Vajnovszki~\cite{bv} and a  bijection between $\A_n$ and a new subset of $\I_n$ that we introduce below.

For a sequence  $s\in\I_n$, define the positions of {\em {\bf\em n}on-{\bf\em asc}ents} of $s$ by
 $$
 \NASC(s):=\{i\in[n-1]: s_i\geq s_{i+1}\}=[n-1]\setminus\ASC(s).
 $$
Consider  the set $\B_n$  of  inversion sequences $b=(b_1,b_2,\ldots,b_n)\in\I_n$  satisfying the  following properties:
\begin{itemize}
\item[(a)]   if $b_i\geq b_{i+1}$ and $b_i=i-1$, then $b_j< j-1$ for all $j>i$;
\item[(b)]  if $b_i\geq b_{i+1}$ and $b_i<i-1$, then $i-1$ does not occur in  $(b_{i+1},b_{i+2},\ldots, b_{n})$.
\end{itemize}
For example, $(0,0,0,2)\in\B_4$ but $(0,0,0,1)\notin\B_4$.
We  aim to describe a map $\beta$ from $\mathcal{B}_n$ to $\mathcal{A}_n$ in   terms of an algorithm, which is similar in nature to the way modified ascent sequences in~\cite{bcdk} were created.   Let $b\in\B_n$ be an input sequence and suppose that $\NASC(b)=\{i_1, i_2, \ldots , i_k\}$ with $i_1>i_2>\cdots > i_k$. Do \\
\hspace*{0.5cm}  for $i=i_1,i_2,\ldots, i_k$:\\
\hspace*{1cm}  for $j=i+1, i+2, \ldots, n$:\\
 \hspace*{1.5cm}  if $b_i<i-1$ and $b_j>i-1$, then $b_j:=b_j-1$\\
and denote the resulting sequence by $\beta(b)$.
\begin{example}
Consider the sequence $b=(0,1,0,2,3,2,5,1,7)\in\B_9$. We have $\NASC(b)=\{7,5,2\}$  and the algorithm computes $\beta(b)$ in the following steps:
$$
\begin{array}{ccccccccccc}
b=&0&1&0&2&3&2&5&1&\bf{7}&\\
&0&1&0&2&3&2&\bf{5}&1&\bf{6}&\\
&0&1&0&2&3&2&4&1&5&\\
&0&1&0&2&3&2&4&1&5&=\beta(b).\\
\end{array}
$$
In each step every   boldfaced letter is decreased by
  one.
\end{example}

The property (b) ensures the following fact of the algorithm $\beta$.
\begin{fact}\label{beta:fact}
The relative order of entries in $b\in\B_n$ and $\beta(b)$ are the same.
\end{fact}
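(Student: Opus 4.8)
\textbf{Proof proposal for Fact~\ref{beta:fact}.} The plan is to trace the algorithm $\beta$ carefully and argue that the decrements it performs never disturb the relative order of the entries. Write $b=(b_1,\ldots,b_n)$ and let $b^{(0)}=b$, $b^{(1)},\ldots,b^{(k)}=\beta(b)$ be the successive sequences obtained after processing the non-ascent positions $i_1>i_2>\cdots>i_k$ (so $b^{(\ell)}$ is obtained from $b^{(\ell-1)}$ by processing $i=i_\ell$). The key observation is that, at the stage where we process a position $i$ with $b_i<i-1$, the inner loop subtracts $1$ from \emph{every} entry $b_j$ with $j>i$ that exceeds $i-1$; in other words it applies the ``flattening'' map $x\mapsto x-\chi(x>i-1)$ to the suffix $(b_{i+1},\ldots,b_n)$, while leaving the prefix $(b_1,\ldots,b_i)$ untouched. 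Such a map is weakly order-preserving on the suffix, and since every suffix entry that was already $\le i-1$ is fixed while every entry that was $\ge i$ is decreased by exactly one, two suffix entries $b_j,b_{j'}$ satisfy $b_j<b_{j'}$, $b_j=b_{j'}$, or $b_j>b_{j'}$ after the step precisely when they did before the step — the only values that could ``collide'' are a fixed value $i-1$ and a decreased value $i$, but by property~(b) the value $i-1$ does \emph{not} occur anywhere in $(b_{i+1},\ldots,b_n)$ when $b_i<i-1$ and $b_i\ge b_{i+1}$ (which is exactly the situation $i\in\NASC(b)$ with $b_i<i-1$), so no collision arises.

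Next I would handle the interaction between the prefix and the suffix. When processing $i$, prefix entries are unchanged and suffix entries are either unchanged or lowered by one; a prefix entry $b_\ell$ ($\ell\le i$) has $b_\ell\le \ell-1\le i-1$, so a suffix entry $b_j$ and a prefix entry $b_\ell$ could only change their relative order if $b_j$ drops from $i$ to $i-1$ and $b_\ell=i-1$; but $b_\ell=i-1$ together with $\ell\le i$ forces $\ell=i$ and $b_i=i-1$, contradicting $b_i<i-1$. Hence each individual step of the algorithm preserves the relative order of all pairs of entries. Because the positions are processed in decreasing order $i_1>i_2>\cdots>i_k$, one should also check that the condition triggering a decrement (``$b_i<i-1$'') is correctly evaluated against the \emph{current} sequence $b^{(\ell-1)}$: here note that processing $i_1,\ldots,i_{\ell-1}$ only alters entries in positions $>i_1>i_\ell$, hence positions $\le i_\ell$ are still their original values, so the test $b_{i_\ell}<i_\ell-1$ reads the original entry $b_{i_\ell}$ and property~(b) applies to the \emph{original} sequence $b$ exactly as needed. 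A trivial induction on $\ell$ now gives that $\beta(b)$ has the same relative order of entries as $b$, which is the assertion.

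\textbf{Main obstacle.} The only delicate point — and the step I expect to require the most care — is the ``no collision'' argument: one must be certain that when the value $i$ is pushed down to $i-1$ in the suffix, there is no pre-existing $i-1$ in that suffix to merge with, and this is precisely what the somewhat technical property~(b) of $\B_n$ is designed to guarantee. Everything else is bookkeeping about which positions and values are affected at each stage. One should also take a moment to confirm that $\beta(b)$ is a genuine inversion sequence (entries stay $\ge 0$ and the $j$-th entry stays $<j$), but this follows because decrements only lower entries and are applied to entries that were $\ge i\ge 1$, and because the relative-order preservation combined with $b\in\I_n$ keeps each entry below its index.
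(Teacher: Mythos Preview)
Your argument is correct and is precisely a fleshed-out version of what the paper leaves as a one-line remark (``property~(b) ensures the following fact''); the paper provides no further proof. Two cosmetic points to tighten: (i) the phrase ``positions $>i_1>i_\ell$'' should read ``positions $>i_m>i_\ell$ for each processed $m<\ell$'' (step $m$ touches positions $>i_m$, not just $>i_1$), though your conclusion that position $i_\ell$ is untouched is unaffected; (ii) when you invoke property~(b) to rule out the value $i_\ell-1$ in the suffix you are implicitly using it for the \emph{current} suffix, so add the observation that each earlier stage $m<\ell$ only decrements entries of current value $\ge i_m$, leaving them $\ge i_m-1\ge i_\ell$, hence no new occurrence of $i_\ell-1$ is created and property~(b) for the original $b$ indeed suffices.
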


This fact implies the positions of non-ascents in $b$ and $\beta(b)$ coincide.   The above procedure is easy to invert by the following {\em addition} algorithm. Let $\beta(b)=(s_1,\ldots,s_n)$ be the input sequence. Do\\
\hspace*{0.5cm}  for $i=i_k, i_{k-1}\ldots, i_1$:\\
\hspace*{1cm}  for $j=i+1, i+2, \ldots, n$:\\
 \hspace*{1.5cm}  if $s_i<i-1$ and $s_j\geq i-1$, then $s_j:=s_j+1$.\\
Thus the map $\beta$ is injective.
\begin{proposition}\label{th2.1}
The map $\beta:\mathcal{B}_n\rightarrow\mathcal{A}_n$ is a bijection that preserves the quintuple of set-valued statistics
$(\ASC,\DIST,\MAX,\ZERO,\RMIN)$.
\end{proposition}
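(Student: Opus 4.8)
The plan is to verify that $\beta$ is a well-defined bijection $\B_n \to \A_n$ and then to track each of the five set-valued statistics through the algorithm. First I would check that $\beta(b)$ actually lies in $\A_n$ for every $b \in \B_n$: since the algorithm only decreases entries and never below their predecessor in the way that would violate the ascent-sequence condition, one argues by downward induction on the non-ascent positions $i_1 > i_2 > \cdots > i_k$ that after processing position $i_j$, the prefix up to a suitable point already satisfies $s_i \le \asc(s_1,\dots,s_{i-1})+1$. Conditions (a) and (b) on $\B_n$ are exactly what is needed: (a) handles the case where a non-ascent occurs at a current maximal position, and (b) guarantees that the value $i-1$ being "skipped" does not reappear later, so that decrementing larger entries by one produces no collision that would break the ascent bound. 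Combined with the explicit addition algorithm already displayed in the excerpt (which is a left inverse), and with Fact~\ref{beta:fact} and the injectivity remark, this gives that $\beta$ is a bijection once we know the target is correctly $\A_n$; alternatively one can count and use that $|\B_n| = |\A_n|$ via the statistic-preservation below together with known Fishburn enumeration, but I prefer the direct well-definedness argument.

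Next I would establish the preservation of the five set-valued statistics one at a time. For $\ASC$ (equivalently $\NASC$): this is immediate from Fact~\ref{beta:fact}, since $\beta$ preserves the relative order of all entries, so $b_i \ge b_{i+1}$ iff $\beta(b)_i \ge \beta(b)_{i+1}$. For $\ZERO$: the algorithm never changes an entry equal to $0$ (it only decrements entries $b_j > i-1 \ge 0$, and when $i-1 = 0$ the guard $b_i < i-1$ fails), and it never creates a new $0$ for the same reason, so $\ZERO(b) = \ZERO(\beta(b))$. For $\DIST$: one checks that the last occurrence of a given value and the property of being a non-repeated positive entry are invariant, again because relative order is preserved (Fact~\ref{beta:fact}) and zeros are fixed; if position $i$ holds the last occurrence of a distinct positive value in $b$, the same position holds the last occurrence of a distinct positive value in $\beta(b)$, because two entries are equal in $b$ iff they are equal in $\beta(b)$. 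For $\RMIN$: right-to-left minima depend only on relative order, hence are preserved by Fact~\ref{beta:fact}.

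The statistic that requires the real work is $\MAX$, i.e. the set $\{i : s_i = i-1\}$, because this is not order-invariant — it compares an entry to its \emph{position}. Here the plan is to show that $b_i = i-1$ in $b$ if and only if $\beta(b)_i = i-1$ in $\beta(b)$. The key observation is that the total amount subtracted from the entry in position $i$ during the algorithm equals the number of non-ascent positions $i_j < i$ at which $b_{i_j} < i_j - 1$ and whose "skipped value" $i_j - 1$ lies strictly below $b_i$ — and one must show this quantity equals $(i-1) - (\text{the largest value that a position-}i\text{ maximal entry could have after reduction})$, or more precisely that it is $0$ exactly when $b_i$ was already maximal. Condition (a) is crucial: it says that once a maximal entry is followed by a non-ascent, no later position is maximal, so maximal entries all sit in the initial increasing run $0,1,2,\dots$, on which the algorithm does nothing (there are no non-ascents among them with $b_i < i-1$); conversely if $\beta(b)_i = i-1$ then tracing the additions in the inverse algorithm shows $b_i = i-1$. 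I expect this $\MAX$-preservation to be the main obstacle, and I would prove it by a careful induction on the non-ascent positions processed, maintaining the invariant that the set of maximal positions of the partially-transformed sequence is unchanged at each step. The remaining verifications are then routine bookkeeping.
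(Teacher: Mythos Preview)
Your approach aligns with the paper's, but there is a real gap in the bijection step. Having the addition algorithm as a left inverse of $\beta$ yields only injectivity; together with well-definedness $\beta:\B_n\to\A_n$ this does \emph{not} give a bijection. Your alternative of counting does not help either: nothing in hand establishes $|\B_n|=|\A_n|$, and invoking Fishburn enumeration for $\B_n$ would be circular (the later identification of $\B_n$ with $\S_n(\sepattern\,)$ in the paper actually uses this proposition). What is missing is surjectivity, which the paper proves directly: given $s\in\A_n$, apply the addition algorithm to obtain $b$, and verify $b\in\B_n$ by checking $b_i\le s_i+\nasc(s_1,\ldots,s_i)\le\asc(s_1,\ldots,s_i)+\nasc(s_1,\ldots,s_i)=i-1$, with properties~(a) and~(b) coming straight from the construction. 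The paper's well-definedness argument is likewise a one-line bound, $s_i\le i-1-\nasc(s_1,\ldots,s_i)=\asc(s_1,\ldots,s_i)$, rather than the induction you sketch.

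Your treatment of $\MAX$ is also more elaborate than needed. You already isolate the key fact: condition~(a) forces all maximal positions of $b\in\B_n$ to lie in the initial strictly increasing run $0,1,\ldots,p-1$, which the algorithm leaves untouched. For the converse direction no induction or inverse-tracing is required: any position $j$ beyond that run satisfies $b_j<j-1$ by~(a), and since the algorithm only decreases entries, $\beta(b)_j\le b_j<j-1$. So $\MAX$ is preserved in two lines, and the remaining four statistics follow from Fact~\ref{beta:fact} exactly as you say.
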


\begin{proof}
 First we shall prove that the map $\beta$ is well defined, that is, for any $b\in\B_n$ we have $\beta(b)=(s_1,s_2,\ldots, s_n)\in \A_n$. From    properties (a) and $(b)$ and the definition of $\beta$, we see that $s_i\leq i-1-\nasc(s_1,s_2,\ldots,s_{i})=\asc(s_1,s_2,\ldots, s_i)\leq \asc(s_1,s_2,\ldots, s_{i-1})+1$. This implies that the resulting sequence $\beta(b)$ is an ascent sequence.

 Recall that we have shown that the map $\beta$  is injective. In order to show that the map $\beta$ is a bijection, it remains to show that $\beta$ is surjective.  For any $s\in \A_n$, we can  get a sequence $b=(b_1,b_2,\ldots, b_n)$ by applying the addition algorithm.
 It is not difficult to check that the resulting sequence verifies properties $(a)$ and $(b)$. Moreover, we have $b_i\leq s_i+\nasc(s_1,s_2\ldots, s_{i})\leq \asc(s_1,s_2,\ldots,s_i)+\nasc(s_1,s_2,\ldots, s_{i})=i-1 $. This yields that   $b\in \B_n$, and thus $\beta$ is a bijection.

 Note that for any $b\in\B_n$, property (a) ensures that all  maximal entries of $b$ must appear exactly in the initial strictly increasing subsequence. Thus, $\beta$ preserves the statistic $\MAX$. It follows from Fact~\ref{beta:fact} that $\beta$ also preserves the other four statistics, which completes the proof.
 \end{proof}

Next we give an overview of the permutation code $b: \S_n\rightarrow\I_n$ from~\cite{bv}.
An interval $I=[p,q]$, $p\leq q$, is the set of integers $\{x: p\leq x\leq q\}$; and a labeled interval is  a pair $(I,\ell)$ where $I$ is an interval and $\ell$ is an integer.
For a permutation $\pi=\pi_1\pi_2\cdots \pi_n\in \S_n $ and an integer $i$,  $0\leq i<n$, the $i$th slice of $\pi$ is a sequence of labeled intervals
$U_i(\pi)=(I_1, \ell_1),(I_2, \ell_2),\ldots, (I_k, \ell_k)$ constructed recursively by the following procedure.

Set $U_0(\pi)=([0,n], 0)$. Assume that $U_{i-1}(\pi)=(I_1, \ell_1),(I_2, \ell_2),\ldots, (I_k, \ell_k)$ has been defined for some
$1\leq i<n$ and $\pi_i\in I_v$.  Below is a procedure to determine $U_{i}(\pi)$ from $U_{i-1}(\pi)$ according to four possible cases:
 \begin{itemize}
 \item If  $min(I_v)<\pi_i<max(I_v)$, then $U_i(\pi)$ equals
 $$
 (I_1, \ell_1), \ldots, (I_{v-1}, \ell_{v-1}),(H, \ell_v), (J, \ell_{v+1}), (I_{v+1}, \ell_{v+2}),\ldots, (I_{k-1}, \ell_k), (I_{k}, \ell_k+1),
 $$
 where $H=[\pi_{i}+1, max(I_v)]$  and $J=[min(I_v), \pi_i-1 ]$.
 \item If  $min(I_v)<\pi_i=max(I_v)$, then $U_i(\pi)$ equals
 $$
(I_1, \ell_1), \ldots, (I_{v-1}, \ell_{v-1}),  (J, \ell_{v+1}), (I_{v+1}, \ell_{v+2}), \ldots, (I_{k-1}, \ell_{k}), (I_{k }, \ell_k+1),
 $$
 where $J=[min(I_v), \pi_i-1 ]$.
 \item If  $min(I_v)=\pi_i<max(I_v)$, then $U_i(\pi)$ equals
 $$
 (I_1, \ell_1), \ldots, (I_{v-1}, \ell_{v-1}),  (H, \ell_{v})(I_{v+1}, \ell_{v+1}), \ldots, (I_{k-1}, \ell_{k-1}), (I_{k }, \ell_k+1),
 $$
 where $H=[\pi_{i}+1, max(I_v)]$.

 \item If  $min(I_v)=\pi_i=max(I_v)$, then  $U_i(\pi)$ equals
 $$
 (I_1, \ell_1), \ldots, (I_{v-1}, \ell_{v-1}),  (I_{v+1}, \ell_{v+1}),\ldots,  (I_{k-1}, \ell_{k-1}), (I_{k }, \ell_k+1).
 $$
 \end{itemize}
 Now the permutation code $b(\pi)=(b_1,b_2,\ldots, b_n)$ is defined by letting $b_i=\ell_v$ such that $(I_v, \ell_v)$ is a labeled interval in the $(i-1)$-th slice of $\pi$ with $\pi_i\in I_v$.

\begin{example}
Let $\pi=61832547$. The process described above gives the slices below.
$$
\begin{array}{lll}
U_0(\pi)&=& ([0,8], 0)\\
U_1(\pi)&=& ([7,8], 0)([0,5], 1)\\
U_2(\pi)&=& ([7,8], 0)([2,5], 1)([0,0], 2) \\
U_3(\pi)&=&([7,7], 1)([2,5], 2)([0,0], 3)\\
U_4(\pi)&=&([7,7], 1)([4,5], 2)([2,2], 3)([0,0], 4)\\
U_5(\pi)&=& ([7,7], 1)([4,5], 2)([0,0], 5)\\
U_6(\pi)&=& ([7,7], 1)([4,4], 5)([0,0], 6)\\
U_7(\pi)&=&([7,7], 1)([0,0], 7)  \\
\end{array}
$$
Thus, we have $b(\pi)=(0,1,0,2,3,2,5,1)$.
\end{example}

The permutation code $b$ constructed above proves a different set-valued extension of Foata's double Eulerian equidistribution.
\begin{theorem}{ \upshape   (Baril and Vajnovszki~\cite{bv})}\label{th2.3}
The code $b: \S_n\rightarrow\I_n$ is a bijection  such  that  for any $\pi\in\S_n$,
 $$
 (\DES, \IDES, \LMIN,  \LMAX, \RMAX)\pi=(\ASC, \DIST, \MAX,  \ZERO, \RMIN)b(\pi).
 $$
\end{theorem}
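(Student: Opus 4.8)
Since this is a theorem attributed to Baril and Vajnovszki, the plan is to establish that the code $b:\S_n\rightarrow\I_n$ defined via the slices $U_i(\pi)$ is a bijection carrying the quintuple $(\DES,\IDES,\LMIN,\LMAX,\RMAX)$ on $\S_n$ to $(\ASC,\DIST,\MAX,\ZERO,\RMIN)$ on $\I_n$. First I would verify that $b$ is well-defined, i.e.\ that $b_i=\ell_v\in[0,i-1]$ whenever $\pi_i\in I_v$ in the $(i-1)$-th slice. This follows by an induction on $i$ tracking the invariant that in $U_{i-1}(\pi)$ the labels $\ell_1<\ell_2<\cdots<\ell_k$ form an increasing sequence with $\ell_1\ge 0$ and $\ell_k\le i-1$, and that the intervals $I_1,\ldots,I_k$ partition $[0,n]$ from right to left (so $\max(I_1)=n$, $\min(I_k)=0$, and consecutive intervals abut). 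Each of the four update rules is checked to preserve this invariant: removing $\pi_i$ and possibly splitting its interval, then incrementing the label of the rightmost interval by one, keeps the labels increasing and raises $\ell_k$ by exactly $1$, so after processing $i-1$ steps the maximal label is at most $i-1$.

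Next I would prove injectivity by exhibiting the inverse. Given $b=(b_1,\ldots,b_n)\in\I_n$, one reconstructs $\pi$ left to right: the invariant above shows that at step $i$ the value $b_i$ occurs as the label of a \emph{unique} interval $I_v$ in $U_{i-1}(\pi)$ (labels are strictly increasing), and knowing which interval $\pi_i$ lies in, together with the fact that the intervals partition the still-unused values, pins down $\pi_i$ — indeed the rules are arranged so that $\pi_i$ is always recoverable from $I_v$ and the label increments. Running the update rules forward then regenerates $U_i(\pi)$, so the map $b$ is invertible and hence bijective; alternatively one checks $|\S_n|=n!=|\I_n|$ and concludes bijectivity from injectivity.

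The substantive part is the statistic-matching, which I would handle coordinate by coordinate. For $\DES\leftrightarrow\ASC$: a descent $\pi_i>\pi_{i+1}$ corresponds to $\pi_{i+1}$ landing in an interval strictly to the right of (hence with strictly smaller label than) the one created at step $i$, giving $b_i\ge b_{i+1}$, and conversely; one reads off from the rules that the label assigned at step $i+1$ exceeds $b_i$ exactly when $\pi_{i+1}>\pi_i$. For $\LMAX\leftrightarrow\ZERO$: $\pi_i$ is a left-to-right maximum iff $\pi_i$ lies in the leftmost interval $I_1$ (the one containing $n$), whose label stays $0$ throughout — so $b_i=0$ iff $i\in\LMAX(\pi)$. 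For $\LMIN\leftrightarrow\MAX$: $\pi_i$ is a left-to-right minimum iff $\pi_i$ lies in the rightmost interval $I_k$, whose label at step $i-1$ is exactly $i-1$ (it has been incremented once per previous step). For $\RMAX\leftrightarrow\RMIN$ and $\IDES\leftrightarrow\DIST$: here I would argue that $i\in\RMAX(\pi)$ iff the interval containing $\pi_i$ is never split again after step $i$, which translates through the rules into $b_i<b_j$ for all $j>i$; and that $\pi_i+1$ lies to the left of $\pi_i$ iff, at step $i$, the value $\pi_i$ sits at the top of its interval but is not the global maximum — a condition one checks is equivalent to $b_i\ne 0$ together with $b_i$ not reappearing among $b_{i+1},\ldots,b_n$. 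The main obstacle is precisely this last bundle: the $\IDES\leftrightarrow\DIST$ and $\RMAX\leftrightarrow\RMIN$ correspondences require carefully tracing how an interval's label and endpoints evolve over \emph{all} remaining steps, not just one, so the bookkeeping is the crux; the other three equalities are essentially immediate from the single-step invariant.
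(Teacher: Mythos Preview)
The paper does not actually prove Theorem~\ref{th2.3}: it is quoted verbatim as a result of Baril and Vajnovszki~\cite{bv} and then used as a black box in the construction of $\Psi$ and $\Phi$. There is therefore no in-paper argument to compare your proposal against; any self-contained proof would have to be imported from~\cite{bv}.

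That said, your sketch is a reasonable outline of how such a proof goes, with two caveats worth flagging. First, the intervals in $U_{i-1}(\pi)$ do \emph{not} partition $[0,n]$ and consecutive intervals do \emph{not} abut: their union is $[0,n]\setminus\{\pi_1,\ldots,\pi_{i-1}\}$, so gaps appear from step~$2$ on (see the example, where $U_2$ already misses $1$ and $6$). The correct invariant is that the intervals are the maximal runs of this set, listed in decreasing order. Second, your inverse is underspecified: knowing $b_i=\ell_v$ identifies the interval $I_v$ but not the value $\pi_i$ inside it when $|I_v|>1$; one cannot recover $\pi_i$ from $b_i$ and $U_{i-1}$ alone, and the actual inverse in~\cite{bv} requires reading the full tail $(b_{i+1},\ldots,b_n)$. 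This is the same global bookkeeping that you correctly identify as the crux for $\IDES\leftrightarrow\DIST$ and $\RMAX\leftrightarrow\RMIN$, so the gap in your injectivity argument and the gap in those two statistic matchings are really the same missing lemma.
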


\begin{lemma}\label{th2.4}
 The permutation code $b$ induces a bijection between  $\S_n(\sepattern\,)$ and $\B_n$.
\end{lemma}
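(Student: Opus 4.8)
Since Theorem~\ref{th2.3} already shows that $b\colon\S_n\to\I_n$ is a bijection, it suffices to prove that a permutation $\pi\in\S_n$ avoids \sepattern\ if and only if $b(\pi)\in\B_n$; the lemma then follows by restricting $b$. The first step is to unwind the bivincular constraints: $\pi$ contains \sepattern\ exactly when there is a position $i\in[n-1]$ with $\pi_i<\pi_{i+1}$ and $\pi_i\ge 2$ for which the entry $\pi_i-1$ does \emph{not} occur among $\pi_1,\dots,\pi_{i-1}$; indeed $\pi_i-1$ can equal neither $\pi_i$ nor $\pi_{i+1}$, so in that case it sits at some position $\ge i+2$ and completes an occurrence of the pattern. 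Thus \sepattern-avoidance says: for every ascent top $\pi_i\ge 2$, the entry $\pi_i-1$ lies to the left of position~$i$.

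The heart of the proof is a structural description of the slices, which I would establish by a routine induction over the four cases in the construction of $U_i$. In $U_{i-1}(\pi)$ the intervals are pairwise disjoint and listed in decreasing order of their elements; their union equals $[0,n]\setminus\{\pi_1,\dots,\pi_{i-1}\}$; the labels are \emph{strictly} increasing along the slice; and the last interval (the one containing $0$) carries the label $i-1$. Two consequences are immediate. First, the entry $\pi_i-1$ has already occurred among $\pi_1,\dots,\pi_{i-1}$ if and only if $\pi_i$ is the minimum element of the interval of $U_{i-1}(\pi)$ containing it: if $\pi_i-1$ has not occurred it lies in that same interval, forcing its minimum strictly below $\pi_i$, and reading an interval minimum is precisely the third or fourth case of the construction. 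Second, from Theorem~\ref{th2.3} we have $\MAX(b(\pi))=\LMIN(\pi)$ and $\ASC(b(\pi))=\DES(\pi)$, so that $b_i\ge b_{i+1}\iff\pi_i<\pi_{i+1}$ and $b_i=i-1\iff\pi_i$ is a left-to-right minimum of $\pi$.

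With these tools I would verify that $\pi$ contains \sepattern\ precisely when $b(\pi)$ violates condition (a) or (b). Suppose $\pi$ has an occurrence at~$i$. If $\pi_i$ is a left-to-right minimum, then $\pi_i-1<\pi_i$ occurs later and hence forces a further left-to-right minimum after~$i$, so (a) fails. If $\pi_i$ is not a left-to-right minimum, then $b_i<i-1$ while $\pi_i$ strictly exceeds the minimum of its interval, so $\pi_i$ is read strictly inside or at the top of that interval; I would check that this moves the label $i-1$ off the last interval onto a non-last interval at $U_i$, whence it can only migrate toward intervals of larger elements and must eventually be read, producing some $j>i$ with $b_j=i-1$, so (b) fails. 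Conversely, a violation of (a) at~$i$ says $\pi_i<\pi_{i+1}$, that $\pi_i$ is a left-to-right minimum, and that some later entry is also a left-to-right minimum; the latter entry is $<\pi_i$, hence $\pi_i\ge 2$ and $\pi_i-1$ lies to the right of~$i$, an occurrence of \sepattern. A violation of (b) at~$i$ says $\pi_i<\pi_{i+1}$, $b_i<i-1$ (so $\pi_i\ge 2$), and $b_j=i-1$ for some $j>i$; here I would invoke the contrapositive of the migration analysis: were $\pi_i$ the minimum of its interval, then reading it (the minimum of an interval, or an entire singleton) together with the strict monotonicity of the labels would erase the value $i-1$ from slice $U_i$ onward, since every label occurring from $U_i$ on is either an old non-last label (hence $\le i-2$) or equals the last-interval label (hence $\ge i$); this contradicts $b_j=i-1$. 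So $\pi_i$ is not that minimum, the entry $\pi_i-1$ occurs to the right of~$i$, and \sepattern\ occurs.

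The bookkeeping in this last step is the main obstacle: one must follow a single label value through the branching split/shrink/delete rules of the slices and certify that it is neither lost nor spuriously recreated before the interval carrying it is consumed. The two invariants of the structural lemma — strictly increasing labels within each slice, and last interval's label equal to the slice index — are exactly what make this tracking go through, so proving them cleanly is the real crux.
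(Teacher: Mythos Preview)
Your outline is sound, and the label-tracking argument for the equivalence $\pi\notin\S_n(\sepattern\,)\iff b(\pi)\notin\B_n$ goes through once one checks the two invariants you isolate (strictly increasing labels, last label equal to the slice index) and observes that a non-last label can only disappear from a slice by being read. The paper, however, takes a shorter route: it first notes that $|\S_n(\sepattern\,)|=|\A_n|=|\B_n|$ (the latter via Proposition~\ref{th2.1}), so it suffices to prove the single implication that $\pi$ avoiding \sepattern\ forces $b(\pi)\in\B_n$. That direction is then handled exactly as in your sketch: from $b_i\ge b_{i+1}$ one gets $\pi_i<\pi_{i+1}$, avoidance forces $\pi_i-1$ to lie to the left (so $\pi_i=\min(I_v)$), and a one-line inspection of cases~3 and~4 of the slice update shows that the label $i-1$ vanishes when $b_i<i-1$, while $b_i=i-1$ forces $\pi_i=1$ and hence no later left-to-right minimum. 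What your approach buys is an explicit inverse characterization (you know \emph{exactly} which inversion sequences in $\I_n$ come from \sepattern-avoiders) independent of any enumeration result; what the paper's approach buys is brevity, at the cost of relying on Proposition~\ref{th2.1} and the known bijection $\S_n(\sepattern\,)\leftrightarrow\A_n$ from~\cite{bcdk}.
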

\begin{proof}
 By Proposition~\ref{th2.1}, the sets $\S_n(\sepattern\,)$ and $\mathcal{B}_n$ have the same cardinality. In order to prove that  $b$ induces a bijection between  $\S_n(\sepattern\,)$ and $\mathcal{B}_n$,  it remains to show that  for any $\pi\in\S_n(\sepattern\,)$, we have $b(\pi)=(b_1,b_2,\ldots, b_n)\in \mathcal{B}_n$.
To this end, it suffices to verify that $b(\pi)$ has properties $(a)$ and $(b)$.
Suppose that  $(I_v, \ell_v )$ is a labeled interval in the $(i-1)$th slice of $\pi$ with $\pi_i\in I_v $.  Clearly, we have $b_i=\ell_v$. Suppose that $b_i\geq b_{i+1}$.  By Theorem~\ref{th2.3}, $\DES(\pi)=\ASC(b(\pi))$ and so $\pi_i< \pi_{i+1}$. Since $\pi$ avoids the pattern $\sepattern\,$, if $\pi_i>1$, then the letter $\pi_i-1$ must appear before $\pi_i$ in $\pi$ and consequently  $min(I_v)=\pi_i$. From the definition of the $i$-th slice of $\pi$, we see that  if $b_i<i-1$, then  the element $i-1$ will never occur in the suffix $b_{i+1}b_{i+2}\ldots b_n$;  and if $b_i=i-1$, we have $b_j<j-1$ for all $j>i$. Thus, the resulting sequence $b(\pi)$ has properties $(a)$ and $(b)$, completing the proof.
\end{proof}

\begin{proof}[{\bf Proof of Theorem \ref{mainth}}] Let $\Psi=\beta \circ b$.  It follows from Proposition~\ref{th2.1}, Theorem~\ref{th2.3} and Lemma~\ref{th2.4} that $\Psi$ is a bijection with the desired properties.
\end{proof}

In order to construct the bijection $\Upsilon$ in Theorem~\ref{bij:sym}, we consider the set $\S_n(\pattern\,)$ of  all $(\pattern\,)$-avoiding permutations in $\S_n$, where $\pi\in\S_n$ avoids the generalized pattern $(\pattern\,)$ if there is no subsequence  $\pi_i\pi_{i+1}\pi_j$ of $\pi$ satisfying $\pi_i<\pi_{i+1}=\pi_j+1$. For $\pi=\pi_1\pi_2\cdots\pi_n\in\S_n$, let $\pi^c:=(n+1-\pi_1)(n+1-\pi_2)\cdots(n+1-\pi_n)$ be the {\em complement} of $\pi$. The following relation is obvious.

 \begin{lemma}\label{lem1}
 The transformation $\pi\mapsto(\pi^{-1})^{c}$ is a bijection between $\S_n(\sepattern\,)$ and $\S_n(\pattern\,)$ such that
 $$
 (\des, \iasc, \lmin, \lmax) \pi= (\iasc, \des, \lmax, \rmax)(\pi^{-1})^{c},
 $$
 where $\iasc(\pi)=n-1-\ides(\pi)$.
 \end{lemma}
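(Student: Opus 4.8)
The plan is to treat this as a routine verification: both the assertion about the two generalized patterns and the assertion about the four statistics are instances of the standard interaction of \emph{inverse} and \emph{complement} with, respectively, bivincular pattern containment and the Eulerian/record statistics. Throughout, write $\rho:=(\pi^{-1})^{c}$, so that $\rho_k=n+1-\pi^{-1}_k$; note also that $(\tau^{c})^{-1}=(\tau^{-1})^{r}$ (the reverse) for every $\tau$, whence $\rho^{-1}=\pi^{r}$.

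For the statistics, I would record the following one-line facts and then chain them. Taking the complement reverses all values, hence turns ascents into descents and left-to-right (resp. right-to-left) maxima into left-to-right (resp. right-to-left) minima while leaving the left/right distinction intact; thus $\des(\tau^{c})=\asc(\tau)=n-1-\des(\tau)$, $\lmax(\tau^{c})=\lmin(\tau)$ and $\rmax(\tau^{c})=\rmin(\tau)$ for every $\tau\in\S_n$. Taking the inverse swaps positions with values, so $\ides(\tau)=\des(\tau^{-1})$ straight from the definition of $\IDES$, and reading the definitions of $\LMIN,\RMIN,\LMAX$ one sees that $j\in\LMIN(\sigma)\iff\sigma_j\in\LMIN(\sigma^{-1})$ and $j\in\RMIN(\sigma)\iff\sigma_j\in\LMAX(\sigma^{-1})$, hence $\lmin(\sigma)=\lmin(\sigma^{-1})$ and $\rmin(\sigma)=\lmax(\sigma^{-1})$. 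Finally $\des(\pi^{r})=\asc(\pi)$ since reversal exchanges ascents and descents. Putting these together, and using $\iasc(\pi)=n-1-\ides(\pi)$,
\begin{align*}
\des(\rho)&=n-1-\des(\pi^{-1})=n-1-\ides(\pi)=\iasc(\pi),\\
\iasc(\rho)&=n-1-\ides(\rho)=n-1-\des(\rho^{-1})=n-1-\des(\pi^{r})=\des(\pi),\\
\lmax(\rho)&=\lmin(\pi^{-1})=\lmin(\pi),\qquad \rmax(\rho)=\rmin(\pi^{-1})=\lmax(\pi),
\end{align*}
which is exactly $(\des,\iasc,\lmin,\lmax)\pi=(\iasc,\des,\lmax,\rmax)(\pi^{-1})^{c}$.

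For the patterns, since $\pi\mapsto(\pi^{-1})^{c}$ is already a bijection of $\S_n$, it suffices to check that $\pi$ contains $\sepattern$ iff $(\pi^{-1})^{c}$ contains $\pattern$. The cleanest route is the standard compatibility of bivincular pattern containment with these symmetries: $\pi$ contains the bivincular pattern with underlying permutation $\sigma$, glued adjacent positions $X$ and glued adjacent values $Y$ if and only if $\pi^{-1}$ contains the pattern with data $(\sigma^{-1},Y,X)$, if and only if $\pi^{c}$ contains the pattern with data $(\sigma^{c},X,\overline{Y})$, where $\overline{Y}$ is the reflection of the value-adjacency set. Now $\sepattern$ has underlying permutation $231$ with the first two positions glued and the two \emph{smallest} values glued; applying the inverse gives $312$ with the first two positions glued and the two smallest values glued, and then the complement gives $132$ with the first two positions glued and the two \emph{largest} values glued, which is precisely $\pattern$. (Alternatively one may argue by hand, rewriting an occurrence $\pi_i\pi_{i+1}\pi_j$ of $\sepattern$ -- i.e. $\pi_i=\pi_j+1$, $\pi_i<\pi_{i+1}$, $i+1<j$ -- as a statement about the positions of three prescribed values and then complementing to reach an occurrence $\rho_a<\rho_{a+1}=\rho_c+1$ of $\pattern$, and conversely.) I expect the only step requiring genuine care -- and hence the likeliest source of a slip -- is this last bookkeeping: keeping straight which adjacency lives on positions and which on values, and the off-by-one incurred when the complement reflects the value-adjacency set from the bottom two values to the top two.
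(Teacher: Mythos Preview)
Your verification is correct and carefully executed; the statistics identities and the bivincular-pattern transfer under inverse and complement are all handled properly. The paper itself offers no proof, declaring the lemma ``obvious,'' so your write-up is simply a detailed spelling-out of the same direct verification the authors had in mind.
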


The following is our second restricted double Eulerian equidistribution.
\begin{theorem}\label{mainth3}
There  is a bijection $\Phi: \S_n(\pattern\,)\rightarrow\mathcal{A}_n$  such  that  for any $\pi\in  \S_n(\pattern\,)$,
 $$
 (\DES, \IDES,   \LMAX, \RMAX)\pi=(\ASC, \DIST, \ZERO, \RMIN)\Phi(\pi).
 $$
 \end{theorem}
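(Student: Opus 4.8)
The plan is to derive Theorem~\ref{mainth3} from Theorem~\ref{mainth} by transporting the bijection $\Psi:\S_n(\sepattern\,)\rightarrow\A_n$ through the elementary symmetry of Lemma~\ref{lem1}, rather than building a new coding from scratch. First I would observe that Lemma~\ref{lem1} gives a bijection $\pi\mapsto(\pi^{-1})^c$ from $\S_n(\sepattern\,)$ to $\S_n(\pattern\,)$, and it is stated there only for the \emph{numerical} quadruple $(\des,\iasc,\lmin,\lmax)\mapsto(\iasc,\des,\lmax,\rmax)$; to feed into the set-valued framework I would first upgrade it to the obvious set-valued statement, namely that complementation of the inverse sends $\DES$ to $\IASC:=[n-1]\setminus\IDES$, sends $\IDES$-complement back to $\DES$, and permutes the left/right extremal-position sets accordingly. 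Concretely, inversion turns descent positions into inverse-descent positions and vice versa, while complementation swaps maxima with minima; chasing positions through both operations yields a set-valued refinement of Lemma~\ref{lem1}. The one subtlety is that $\Psi$ in Theorem~\ref{mainth} reads off $\IDES(\pi)$, whereas the composite map naturally produces $\IASC$ of the image; since $\IDES$ and $\IASC$ are complementary subsets of $\{2,\dots,n\}$ (equivalently, of $[n-1]$ after the standard shift), and likewise $\ASC$ and $\NASC=\DIST^c$-type sets are complementary, this bookkeeping only relabels which statistic is called ``\textsf{des}'' versus ``\textsf{asc}'', and matches exactly the asymmetry already present in the target quadruple of Theorem~\ref{mainth3}.

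The key steps, in order, are as follows. \textbf{Step 1.} Prove the set-valued version of Lemma~\ref{lem1}: for $\pi\in\S_n$, writing $\sigma=(\pi^{-1})^c$, we have $\DES(\sigma)=\IDES(\pi)$ shifted appropriately, $\IDES(\sigma)$ corresponds to $\DES(\pi)$, $\LMAX(\sigma)=\LMAX(\pi)$-type correspondence fails and is instead $\LMIN$/$\LMAX$/$\RMAX$ as dictated by complement-of-inverse. This is a direct unwinding of definitions, so I would state it cleanly and verify it on the level of individual positions. \textbf{Step 2.} Restrict to the pattern classes: check that $\pi\mapsto(\pi^{-1})^c$ carries $\S_n(\sepattern\,)$ onto $\S_n(\pattern\,)$, which is exactly the content of the first half of Lemma~\ref{lem1}, already granted. \textbf{Step 3.} Define $\Phi$ on $\S_n(\pattern\,)$ by $\Phi:=\Psi\circ\Theta^{-1}$, where $\Theta:\pi\mapsto(\pi^{-1})^c$; that is, given $\tau\in\S_n(\pattern\,)$, recover the unique $\pi\in\S_n(\sepattern\,)$ with $(\pi^{-1})^c=\tau$ and set $\Phi(\tau)=\Psi(\pi)$. \textbf{Step 4.} Combine: Theorem~\ref{mainth} gives $(\DES,\IDES,\LMIN,\LMAX,\RMAX)\pi=(\ASC,\DIST,\MAX,\ZERO,\RMIN)\Psi(\pi)$; Step~1 rewrites the left side in terms of $\tau$, and after discarding the $\LMIN$/$\MAX$ coordinate (which is the one that does not survive the symmetry cleanly, since complementation sends $\LMIN$ to $\RMAX$ and there is no fifth coordinate left to match it against in the desired quadruple) one reads off $(\DES,\IDES,\LMAX,\RMAX)\tau=(\ASC,\DIST,\ZERO,\RMIN)\Phi(\tau)$, which is precisely Theorem~\ref{mainth3}.

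The main obstacle I anticipate is \emph{not} any deep combinatorics but the careful index bookkeeping in Step~1 and Step~4: the set-valued statistics $\IDES$ and $\DIST$ are indexed by $\{2,\dots,n\}$ while $\DES$ and $\ASC$ live in $[n-1]$, and complementation of a permutation reverses the natural order of values but not of positions, so one must be scrupulous about whether a given index set should be reflected, shifted, or complemented. In particular I would need to confirm that the passage from the quintuple of Theorem~\ref{mainth} to the quadruple of Theorem~\ref{mainth3} is consistent — i.e.\ that exactly one coordinate ($\LMIN\leftrightarrow\MAX$) is dropped and that dropping it does not secretly use information coupling it to the retained coordinates. Since $\Psi$ is an honest bijection and all five set-valued statistics are simultaneously matched, any four of them are matched as well; the only real check is that after applying $\Theta$ the retained four are the ones named in Theorem~\ref{mainth3} and carry the names ($\DES$ versus $\ASC$, via the $\IDES$/$\IASC$ complementation of Lemma~\ref{lem1}) stated there. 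Once this relabeling is pinned down, the proof is a two-line composition.
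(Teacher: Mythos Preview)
There is a genuine gap: Lemma~\ref{lem1} cannot be upgraded to a set-valued statement, and this is not a matter of bookkeeping. The map $\pi\mapsto(\pi^{-1})^c$ interchanges positions and values, so a \emph{position} set such as $\DES(\pi)\subseteq[n-1]$ is carried to a set indexed by \emph{values} of $\pi$; it has no reason to coincide with any of $\DES(\tau)$, $\IDES(\tau)$, or their shifts/complements, all of which are again \emph{position} sets for $\tau$. Concretely, take $\pi=1324\in\S_4(\sepattern\,)$; then $\tau=(\pi^{-1})^c=4231$, and $\DES(\pi)=\{2\}$ while $\DES(\tau)=\{1,3\}$ and $\IDES(\tau)=\{2,4\}$, none of which matches $\DES(\pi)$ after any shift or complement. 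Likewise, for $\pi=312$ one gets $\tau=213$ with $\LMIN(\pi)=\{1,2\}$ but $\LMAX(\tau)=\{1,3\}$. So your Step~1 fails outright, and with it Step~4: the composite $\Psi\circ\Theta^{-1}$ simply does not satisfy the set-valued identity claimed in Theorem~\ref{mainth3}.

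There is also a sanity check that kills the proposal instantly. The paper defines $\Upsilon(s)=\Phi((\pi^{-1})^c)$ with $\pi=\Psi^{-1}(s)$; if your $\Phi=\Psi\circ\Theta^{-1}$ were correct, then $\Upsilon$ would be the identity on $\A_n$, forcing $(\asc,\rep,\zero,\max)=(\rep,\asc,\rmin,\zero)$ pointwise, which is absurd. The paper instead builds $\Phi$ from scratch as $\gamma\circ b$: the Baril--Vajnovszki code $b$ (Theorem~\ref{th2.3}) already delivers the five set-valued equalities on all of $\S_n$, one shows separately (Lemma~\ref{thchi3}) that $b$ sends $\S_n(\pattern\,)$ onto a new intermediate class $\mathcal{C}_n\subseteq\I_n$, and then a new algorithmic bijection $\gamma:\mathcal{C}_n\to\A_n$ (Proposition~\ref{gam:th1}) is constructed that preserves $(\ASC,\DIST,\ZERO,\RMIN)$ but, crucially, \emph{not} $\MAX$. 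That last point is exactly why only four of the five statistics survive in Theorem~\ref{mainth3}, and why the argument cannot be a ``two-line composition'' through Theorem~\ref{mainth}.
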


Before we give the proof of Theorem~\ref{mainth3}, we show how $\Upsilon$ can be constructed by combining bijections $\Psi$ and $\Phi$.
\begin{proof}[{\bf Proof of Theorem~\ref{bij:sym}}]
For each $s\in\A_n$, define $\Upsilon(s)=\Phi((\pi^{-1})^c)$, where $\pi=\Psi^{-1}(s)$. It then follows from Theorem~\ref{mainth}, Lemma~\ref{lem1} and Theorem~\ref{mainth3} that
\begin{align*}
(\asc,\rep,\max,\zero)s&= (\des,\iasc,\lmin,\lmax)\pi\\
&=(\iasc, \des, \lmax, \rmax)(\pi^{-1})^{c}\\
&=(\rep,\asc,\zero,\rmin)\Upsilon(s),
\end{align*}
as desired.
\end{proof}

We continue to prove Theorem~\ref{mainth3}.  Denote by $\mathcal{C}_n$ the set of sequences $(c_1,c_2,\ldots, c_n)\in\I_n$ with the  property that
\begin{itemize}
\item[(c)]  if $c_{i}\geq c_{i+1}$, then $i$ does not occur in  $(c_{i+1},c_{i+2},\ldots, c_{n})$.
\end{itemize}
For example, we have $\mathcal{C}_3=\{(0,0,0),(0,0,2),(0,1,0),(0,1,1),(0,1,2)\}$.
Next we aim to describe a map $\gamma$ from $\mathcal{C}_n$ to $\mathcal{A}_n$ in   terms of an algorithm.   Let $c=(c_1,c_2,\ldots, c_n)\in\mathcal{C}_n$ be the input sequence and suppose that $\NASC(c)=\{i_1, i_2, \ldots , i_k\}$ with $i_1>i_2>\cdots > i_k$. Do \\
\hspace*{0.5cm}  for $i=i_1,\ldots, i_k$:\\
\hspace*{1cm}  for $j=i+1, i+2, \ldots, n$:\\
 \hspace*{1.5cm}  if  $c_j>i$, then $c_j:=c_j-1$\\
and denote the resulting sequence by $\gamma(c)$.

The property (c) ensures the following fact of the algorithm $\gamma$.
\begin{fact}\label{gam:fact}
The relative order of  entries in $c\in\mathcal{C}_n$ and $\gamma(c)$ are the same.
\end{fact}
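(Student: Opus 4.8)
The plan is to establish the stronger statement that each single iteration of the outer loop of $\gamma$ — the pass that processes one non-ascent position $i=i_m$ — already preserves the relative order of any two entries, and then to compose these passes. Fix $c\in\mathcal{C}_n$ with $\NASC(c)=\{i_1>i_2>\cdots>i_k\}$, and for $0\le m\le k$ let $c^{(m)}$ denote the sequence obtained after the passes for $i_1,\ldots,i_m$ have been executed, so that $c^{(0)}=c$ and $c^{(k)}=\gamma(c)$. Since a pass only ever replaces some entry $c^{(m-1)}_j$ by $c^{(m-1)}_j-1$, the bound $c^{(m)}_j\le j-1$ is maintained for every $j$; in particular each $c^{(m)}$ is again an inversion sequence.

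The combinatorial core is the following invariant: for every $m$, the value $i_m$ does not occur among the entries $c^{(m-1)}_{i_m+1},\ldots,c^{(m-1)}_n$. This holds for $c^{(0)}=c$ by property (c), since $i_m\in\NASC(c)$. For the passage from $c^{(t-1)}$ to $c^{(t)}$ with $t<m$, note that $i_t>i_m$ forces $i_t\ge i_m+1$; hence an entry equal to $i_m+1$ is never strictly larger than $i_t$ and is therefore never decremented by the $t$-th pass. Since entries only ever decrease in steps of $1$, no entry can pass from a value exceeding $i_m$ down to the value $i_m$ during passes $1,\ldots,m-1$, and no entry equal to $i_m$ is created; this proves the invariant by induction on $m$.

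Granting the invariant, I would check that the $m$-th pass preserves the relative order of the entries in two positions $a<b$ by a short case analysis. If $b\le i_m$ nothing changes. If $a\le i_m<b$, then $c^{(m-1)}_a\le a-1\le i_m-1$, so in the only case where the $b$-entry is altered, namely $c^{(m-1)}_b>i_m$, we still have $c^{(m-1)}_a<c^{(m-1)}_b-1$, and the comparison is unchanged. If $i_m<a<b$, then by the invariant neither $c^{(m-1)}_a$ nor $c^{(m-1)}_b$ equals $i_m$; hence these two entries are decremented together (when both exceed $i_m$), left alone together (when both are below $i_m$), or one of them straddles $i_m$ while the other is at most $i_m-1$ — and in every subcase the strict or non-strict comparison between the two entries is preserved. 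Composing over $m=1,\ldots,k$ yields the claim. (Incidentally, this also re-proves the remark following the statement: $\NASC(\gamma(c))=\NASC(c)$.)

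The step I expect to be the only real obstacle is formulating the invariant precisely and verifying that it survives all earlier passes; once that is in place, the order-preservation of a single pass is routine bookkeeping. Property (c) is used exactly once, to seed the invariant at step $m$; everything afterwards depends only on the fact that the decrements are monotone and proceed in unit steps, together with the inversion-sequence bound $c^{(m-1)}_j\le j-1$. This argument runs parallel to the one needed for Fact~\ref{beta:fact}, with property (c) playing the role that property (b) plays there.
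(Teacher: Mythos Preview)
Your argument is correct. The paper itself gives no proof of this fact beyond the single sentence ``The property~(c) ensures the following fact of the algorithm~$\gamma$,'' so you have supplied what the authors leave implicit. Your approach --- maintaining the invariant that the value~$i_m$ is absent from positions~$>i_m$ just before pass~$m$, and then doing a case analysis on a single pass --- is exactly the natural way to make the paper's assertion rigorous, and it parallels the role property~(b) plays for Fact~\ref{beta:fact}.

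Two minor remarks on presentation. First, your phrase ``one of them straddles~$i_m$ while the other is at most~$i_m-1$'' is slightly garbled; you mean the mixed subcase where one entry exceeds~$i_m$ and the other is at most~$i_m-1$. Second, when you assert that each~$c^{(m)}$ remains an inversion sequence, it may be worth noting explicitly that entries cannot go negative (a decrement only occurs when the entry exceeds~$i_m\ge 1$), so that the bound~$0\le c^{(m)}_j\le j-1$ is genuinely two-sided. Neither point affects the validity of the argument.
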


   This fact implies that  the positions of non-ascents in $c$ and $\gamma(c)$ coincide. So  the above procedure is easy to invert by the following {\em addition} algorithm. Let $\gamma(c)=(s_1,\ldots,s_n)$ be the input sequence. Do\\
\hspace*{0.5cm}  for $i=i_k, i_{k-1}\ldots, i_1$:\\
\hspace*{1cm}  for $j=i+1, i+2, \ldots, n$:\\
 \hspace*{1.5cm}  if  $s_j\geq i$, then $s_j:=s_j+1$.\\
Thus, the map $\gamma$ is injective.
By similar arguments as in the proof of Proposition~\ref{th2.1}, we can verify the following result.
\begin{proposition}\label{gam:th1}
The map $\gamma: \mathcal{C}_n\rightarrow\mathcal{A}_n$ is a bijection which preserves the quadruple  of set-valued statistics
$(\ASC,\DIST,\ZERO,\RMIN)$.
\end{proposition}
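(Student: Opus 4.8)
The plan is to mirror the proof of Proposition~\ref{th2.1} almost verbatim, substituting the single property (c) for the two properties (a) and (b). Concretely, I would proceed in four steps. \emph{Step 1: $\gamma$ is well defined.} Given $c=(c_1,\ldots,c_n)\in\mathcal{C}_n$, I must show $\gamma(c)=(s_1,\ldots,s_n)\in\mathcal{A}_n$. Fix $i$ and note that at the moment we process $i\in\NASC(c)$, the current value sitting in position $i$ is still $c_i$ (processing is done for indices $j>i$ only, and later outer iterations use strictly smaller indices), and property (c) guarantees that for every $j>i$ in a non-ascent the value $i$ does not appear among $(c_{i+1},\ldots,c_n)$; hence every such $c_j$ with $c_j>i$ gets decremented and every $c_j\le i-1$ is untouched, so after the full run each entry in a non-ascent position $i+1$ satisfies $s_{i+1}\le i-1-\#\{\text{non-ascents among }1,\ldots,i\}$. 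As in the proof of Proposition~\ref{th2.1}, this rearranges to $s_i\le i-1-\nasc(s_1,\ldots,s_i)=\asc(s_1,\ldots,s_i)\le\asc(s_1,\ldots,s_{i-1})+1$, which is exactly the ascent-sequence condition.

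\emph{Step 2: injectivity.} This is already granted by the stated addition algorithm: Fact~\ref{gam:fact} says $\gamma$ preserves the relative order of entries, hence $\NASC(c)=\NASC(\gamma(c))$, so from the output $\gamma(c)$ we recover the set of non-ascent positions and can run the reverse ``addition'' loop (outer index increasing through $i_k,\ldots,i_1$, adding $1$ to each $s_j\ge i$ for $j>i$) to recover $c$. \emph{Step 3: surjectivity.} Given $s\in\mathcal{A}_n$, apply the addition algorithm to produce $c=(c_1,\ldots,c_n)$; one checks that property (c) holds (if $c_i\ge c_{i+1}$ then $i$ is a non-ascent and by construction every later entry equal to $i$ would have been pushed up, so $i$ cannot occur in $(c_{i+1},\ldots,c_n)$) and that $c_i\le s_i+\nasc(s_1,\ldots,s_i)\le\asc(s_1,\ldots,s_i)+\nasc(s_1,\ldots,s_i)=i-1$, so $c\in\mathcal{C}_n$ and $\gamma(c)=s$. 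Together Steps 1--3 show $\gamma$ is a bijection.

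\emph{Step 4: statistic preservation.} Since $\NASC(c)=\NASC(\gamma(c))$ we get $\ASC(c)=\ASC(\gamma(c))$ immediately. Fact~\ref{gam:fact} (preservation of relative order) then forces the remaining three set-valued statistics to agree: $\DIST$ depends only on which positions carry an entry that is nonzero and not repeated later, $\ZERO$ on which entries are minimal among all entries, and $\RMIN$ on the right-to-left minima pattern, all of which are determined by the relative order of the entries together with which entries equal $0$ --- and $0$ is preserved since $\gamma$ only decrements entries strictly larger than some $i\ge1$. Hence $(\ASC,\DIST,\ZERO,\RMIN)c=(\ASC,\DIST,\ZERO,\RMIN)\gamma(c)$.

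The only genuine subtlety --- and the place where one must be slightly careful rather than merely cite Proposition~\ref{th2.1} --- is verifying in Step~1 and Step~3 that the $\mathcal{C}_n$-condition (c) alone, without an analogue of property (a), still yields the bound $c_i\le i-1$; here one uses that in $\mathcal{C}_n$ there is no separate ``maximal entry'' clause because $\MAX$ is not among the tracked statistics, so non-ascent positions are handled uniformly and the decrement ``$c_j>i$'' (rather than the two-pronged rule in $\beta$) is exactly what is needed. I expect this bookkeeping to be the main, though routine, obstacle; everything else transfers directly from the earlier argument, which is precisely why the paper says ``by similar arguments as in the proof of Proposition~\ref{th2.1}.''
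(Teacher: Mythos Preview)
Your proposal is correct and follows essentially the same approach as the paper's proof: both establish well-definedness via the bound $s_i\le i-1-\nasc(s_1,\ldots,s_i)$, surjectivity via the addition algorithm together with the check $c_i\le s_i+\nasc(s_1,\ldots,s_i)=i-1$ and property~(c), and statistic preservation via Fact~\ref{gam:fact}. Your additional commentary on why property~(c) alone suffices (without an analogue of~(a)) and on why zeros are preserved is sound extra detail, but the underlying argument is identical to the paper's.
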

\begin{proof}
 First we shall prove that the map $\gamma$ is well defined, that is, for any $c\in\mathcal{C}_n$ we have $\gamma(c)=(s_1,s_2,\ldots, s_n)\in \A_n$. From  the definition of $\gamma$, we see that $s_i\leq i-1-\nasc(s_1s_2\ldots s_{i})=\asc(s_1s_2\ldots s_i)\leq \asc(s_1s_2\ldots s_{i-1})+1$. This implies that the resulting sequence $\gamma(c)$ is an ascent sequence.

 Recall that we have shown that the map $\gamma$  is injective. In order to show that the map $\gamma$ is a bijection, it remains to show that $\gamma$ is surjective.  For any $s\in \A_n$, we can  get a sequence $c=(c_1,c_2,\ldots, c_n)$ by applying the addition algorithm.
 We have to check that the resulting sequence has property (c), that is, if $s_i\geq s_{i+1}$ (equivalently $c_i\geq c_{i+1}$), then $i$ does not appear in $c_{i+1}c_{i+2}\cdots c_n$. This is true by the addition algorithm.
 Moreover, we have $c_i\leq s_i+\nasc(s_1,s_2\ldots, s_{i})\leq \asc(s_1,s_2,\ldots, s_{i})+\nasc(s_1,s_2,\ldots, s_{i})=i-1$.  This yields that $c\in \mathcal{C}_n$, and thus $\gamma$ is a bijection.

If follows from Fact~\ref{gam:fact} that the quadruple $(\ASC,\DIST,\ZERO,\RMIN)$ is preserved under $\gamma$, which completes the proof.
\end{proof}
It should be noted that $\gamma$ does not preserve the statistic $\MAX$.

\begin{lemma}\label{thchi3}
The permutation code $b$ induces a bijection between  $\S_n(\pattern\,)$ and $\mathcal{C}_n$. 
\end{lemma}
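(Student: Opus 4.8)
The plan is to mirror the argument of Lemma~\ref{th2.4}, now pairing the code $b$ with the characterization of $\mathcal{C}_n$ via property (c). By Proposition~\ref{gam:th1} the sets $\S_n(\pattern\,)$ and $\mathcal{C}_n$ have the same cardinality, so it suffices to show that $b$ maps $\S_n(\pattern\,)$ into $\mathcal{C}_n$; injectivity of $b$ (Theorem~\ref{th2.3}) then upgrades this to a bijection by a counting argument. Thus the real content is: for every $\pi\in\S_n(\pattern\,)$, the code $b(\pi)=(b_1,\ldots,b_n)$ satisfies property (c), namely $b_i\geq b_{i+1}$ forces $i\notin\{b_{i+1},\ldots,b_n\}$.

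First I would fix $\pi\in\S_n(\pattern\,)$ and suppose $b_i\geq b_{i+1}$. Since $\DES(\pi)=\ASC(b(\pi))$ by Theorem~\ref{th2.3}, the inequality $b_i\geq b_{i+1}$ means $i\notin\DES(\pi)$, i.e.\ $\pi_i<\pi_{i+1}$. Now I invoke the avoidance hypothesis: because $\pi$ avoids $(\pattern\,)$, there is no $j>i+1$ with $\pi_j=\pi_{i+1}-1$, i.e.\ the value $\pi_{i+1}-1$ (if it is positive and not equal to $\pi_i$, it would have to occur somewhere) cannot occur \emph{after} position $i+1$. Combined with $\pi_i<\pi_{i+1}$, this pins down the structure of the $(i-1)$th slice $U_{i-1}(\pi)$ at the interval $I_v$ containing $\pi_i$: in the notation of the slice construction, the fact that $\pi_{i+1}$ lies in (or is inserted into) the interval just to the left of the one containing $\pi_i$ forces $\pi_{i+1}=\max(I_v)+1$, so that $\pi_i<\max(I_v)$ is impossible unless no value strictly between $\pi_i$ and $\pi_{i+1}$ survives — i.e.\ $\max(I_v)=\pi_{i+1}-1$. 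Reading off the label-update rules (the four cases in the definition of $U_i$), the label attached to the interval $H=[\pi_i+1,\max(I_v)]$ (when $\pi_i<\max(I_v)$) or the absence of that interval (when $\pi_i=\max(I_v)$) is exactly what governs whether the value $i-1$ — equivalently, in the shifted indexing, the value $i$ — can be produced as some later code entry $b_j$. Tracking the label of the relevant interval through slices $U_i,U_{i+1},\ldots$ shows it never attains the value $i$ after step $i$, because every subsequent insertion either leaves that label fixed or removes the interval.

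The step I expect to be the main obstacle is the bookkeeping in the previous paragraph: translating ``$\pi$ avoids $(\pattern\,)$ and $\pi_i<\pi_{i+1}$'' into a precise statement about which labeled interval in $U_{i-1}(\pi)$ contains $\pi_{i+1}$, and then verifying — case by case against the four update rules — that the offending label $i$ is never created in the suffix. This is the analogue of the ``$\min(I_v)=\pi_i$'' deduction in Lemma~\ref{th2.4}, but here the pattern forbids a descent-type configuration on the \emph{right}, so the role of $\min(I_v)$ there is played by $\max(I_v)$, and one must be careful that the slice construction splits intervals at $\pi_i-1$ and $\pi_i+1$ in a way that is compatible with this. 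Once that structural fact is nailed down, property (c) follows immediately, and the proof concludes exactly as in Lemma~\ref{th2.4}: $b(\pi)\in\mathcal{C}_n$, $b$ is injective, the two sets are equinumerous, hence $b$ restricts to a bijection $\S_n(\pattern\,)\to\mathcal{C}_n$.
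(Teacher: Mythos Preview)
Your overall strategy is exactly the paper's: use equinumeration plus injectivity of $b$, and reduce everything to showing that $b(\pi)$ satisfies property~(c) whenever $\pi$ avoids $(\pattern\,)$. Two points, one minor and one substantive.

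\textbf{Minor.} Proposition~\ref{gam:th1} alone gives $|\mathcal{C}_n|=|\A_n|$; to get $|\S_n(\pattern\,)|=|\mathcal{C}_n|$ you also need $|\S_n(\pattern\,)|=|\A_n|$, which follows from Lemma~\ref{lem1} together with $|\S_n(\sepattern\,)|=|\A_n|$. The paper invokes both.

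\textbf{Substantive.} Your analysis of the slices is aimed at the wrong interval and draws the wrong conclusion. You focus on $I_v\ni\pi_i$ in $U_{i-1}(\pi)$ and conjecture that ``the role of $\min(I_v)$ is played by $\max(I_v)$''. It is not. The correct move---and this is what the paper does---is to look at the interval $(I_v,\ell_v)$ in the \emph{$i$-th} slice $U_i(\pi)$ with $\pi_{i+1}\in I_v$, and to observe that $\pi_{i+1}=\min(I_v)$. The reason is immediate from the pattern: since $\pi_i<\pi_{i+1}$ and $\pi$ avoids $(\pattern\,)$, the value $\pi_{i+1}-1$ must appear \emph{before} position $i+1$, hence it has already been processed and lies in no interval of $U_i(\pi)$; as $I_v$ is an interval containing $\pi_{i+1}$ but not $\pi_{i+1}-1$, its minimum is $\pi_{i+1}$.

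From $\pi_{i+1}=\min(I_v)$ the processing of $\pi_{i+1}$ falls into case~3 or case~4 of the slice construction. In both of these cases the label $\ell_k$ of the last interval $I_k$ (the one containing $0$) is replaced by $\ell_k+1$, and no interval in $U_{i+1}(\pi)$ inherits the old label $\ell_k$. But $\ell_k=i$ in $U_i(\pi)$ (the last label increments by one at every step). Since in every update the new label set is contained in the old label set together with one fresh value larger than all previous labels, the label $i$ never reappears. Hence $c_j\neq i$ for all $j>i$, which is property~(c).

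So the obstacle you flagged is real, but your proposed resolution (via $\max(I_v)$ in the $(i-1)$-th slice) would not go through; the argument hinges on $\min(I_v)=\pi_{i+1}$ in the $i$-th slice, just as Lemma~\ref{th2.4} hinged on $\min(I_v)=\pi_i$ in the $(i-1)$-th slice. The shift is in which entry and which slice you examine, not in swapping $\min$ for $\max$.
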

\begin{proof}
In view of Proposition~\ref{gam:th1} and Lemma~\ref{lem1}, the sets  $\S_n(\pattern\,)$ and $\mathcal{C}_n$ have the same cardinality. So it remains to show that  for any $\pi\in \S_n(\pattern\,)$, we have $b(\pi)=(c_1,c_2,\ldots, c_n)\in \mathcal{C}_n$. If $c_i\geq c_{i+1}$, then by Theorem~\ref{th2.3} we have $\pi_i<\pi_{i+1}$. Since $\pi$ avoids the pattern $(\pattern\,)$, the letter $\pi_{i+1}-1$ must appear in $\pi$ before $\pi_{i+1}$. This means that if $(I_v,\ell_v)$ is a labeled interval in the $i$-th slice of $\pi$ with $\pi_{i+1}\in I_v $, then  $\pi_{i+1}=min(I_v)$. By the construction of $b(\pi)$, the label of the interval containing  $0$ in the $i$-th slice, which is $i$, will disappear in  all slices after the $i$-th slice. This shows that $i$ will not appear in $(c_{i+1},c_{i+2},\ldots, c_{n})$, which completes the proof.
\end{proof}

 \begin{proof}[{\bf Proof of Theorem \ref{mainth3}}] Let $\Phi=\gamma \circ b$.  It follows from Proposition~\ref{gam:th1}, Theorem~\ref{th2.3} and Lemma~\ref{thchi3} that $\Phi$ is a bijection with the desired properties.
\end{proof}

\section{A generalization of Theorem~\ref{main:3}}\label{sec:4}

The purpose of this section is to prove a generalization of Theorem~\ref{main:3} with two new marginal statistics,  that we introduce below, corresponding to $\ealm$ on ascent sequences. For the sake of convenience, we index the maximals/zeros from left to right starting from $0$ (rather than from $1$). 

\begin{definition}
Let $s\in \T_n\setminus\{(0,1,\ldots,n-1)\}$.  Suppose that for $0\le i\le \max(s)-1$, the $i$-th maximal of $s$ is located at the $k_{i}$-th position, that is, $s_{k_{i}}=k_{i}-1$. Define
$$
\mpair(s)=\maxx\{0\le i\le \max(s)-1: s_{k_{i}+1}=s_{k_{i}}\}.
$$
For example, if $s=({\bf0},0,{\bf2},2,0,{\bf5},5,3)$, then  $\max(s)=3$, $k_0=1$, $k_1=3$, $k_2=6$ and consequently $\mpair(s)=2$. It is clear that $\mpair(s)$ exists for all $({\bf2-1})$-avoiding inversion sequences  $s\ne (0,1,2,\ldots,|s|-1)$.
\end{definition}

\begin{definition}
Let $s\in\A_n\setminus\{(0,0,\ldots,0)\}$. Suppose that for $0\le i\le \zero(s)-1$, the $i$-th zero of $s$ is located at the $k_{i}$-th position, that is, $s_{k_{i}}=0$. Define
\begin{align*}
\zpair(s)=\maxx\{0\le i\le \zero(s)-1:s_{k_{i}+1}=1\}.
\end{align*}
For example, if $s=(0,0,1,1,2,0,1,0)$, then $\zero(s)=4$ and $\zpair(s)=2$. It is clear that $\zpair(s)$ exists for all ascent sequences $s\ne (0,0,\ldots,0)$.
\end{definition}

The following is our generalization of Theorem~\ref{main:3}, involving  the marginal statistics $\zpair$, $\ealm$ and $\mpair$.
\begin{theorem}\label{T:main3}
For $n\geq1$, we have
\begin{equation}\label{E:rema2}
\sum_{s\in\T_n}
u^{\rep(s)}z^{\max(s)}w^{\mpair(s)}=\sum_{s\in\A_n}
u^{\rep(s)}z^{\max(s)}w^{\ealm(s)}=
\sum_{s\in\A_n}
u^{\asc(s)}z^{\zero(s)}w^{\zpair(s)},
\end{equation}
where we set $\mpair(0,1,\ldots,n-1)=\ealm(0,1,\ldots,n-1)=\zpair(0,0,\ldots,0)=0$.
\end{theorem}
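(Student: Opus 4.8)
The three generating functions in~\eqref{E:rema2} should be proved equal by establishing two separate equidistributions, mirroring the structure of the paper's earlier arguments. For the right-hand equality, between $(\asc,\zero,\zpair)$ on $\A_n$ and $(\rep,\max,\ealm)$ on $\A_n$, the natural route is to refine the machinery of Section~\ref{sec:2}: the decomposition there already tracks $\ealm$ via the variable $w$, so $F(t;x,q,w,u,z)$ is precisely the multivariate object needed, and one wants to show that the specialization recording $(\rep,\max,\ealm)$ (set $x\to u$, $q\to z$, keep $w$, and $u=z=1$ elsewhere) equals the generating function for $(\asc,\zero,\zpair)$. Rather than extracting this from the closed form of Theorem~\ref{T:gen}, which only handles $w=1$ after the kernel step, I would instead build a parallel decomposition of ascent sequences \emph{from the zero side}: split $\A'':=\{s\in\A: |s|>\zero(s)\}$ according to the position of the entry following the last zero, exactly dual to the $\ealm$-decomposition, and check that the four resulting cases satisfy the \emph{same} functional equation in $(u,z,w)$ as the four cases $\CS_1,\ldots,\CS_4$ satisfy in $(x,q,w)$ (with $u\leftrightarrow x$, $z\leftrightarrow q$). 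Lemmas~\ref{L:case3} and~\ref{L:c3} are the reusable core here; the bijections $\phi$ and $\xi$ were explicitly flagged as "also useful in proving Theorem~\ref{main:3}", so the intended proof surely recycles them, applied to the zero-analogue of the class $\CMcal{P}$ (sequences in which the entry $1$ occurs exactly once). If both decompositions yield the identical functional equation with the identical initial term, the two generating functions coincide term by term.

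For the left-hand equality, between $(\rep,\max,\mpair)$ on $\T_n$ and $(\rep,\max,\ealm)$ on $\A_n$, I would produce a bijection $\T_n\to\A_n$ — equivalently, decompose $({\bf2-1})$-avoiding inversion sequences exactly parallel to the decomposition of ascent sequences. This is the strategy announced in the introduction ("a decomposition of $({\bf2-1})$-avoiding inversion sequences parallel to that of ascent sequences"). Concretely: for $s\in\T_n$ with $\max(s)=p<n$, the entry after the last maximal plays the role of $\ealm$; one splits $\T_n$ into the $\mpair$-analogues of $\CS_1,\ldots,\CS_4$, removes or shifts one entry in each case to land in a shorter $({\bf2-1})$-avoiding sequence or one with a longer initial increasing run, and verifies that $(\rep,\max)$ transform under these moves in the same way $(\rep,\max)$ (resp.\ $(\asc,\zero)$ was not the right pair — here it is literally $(\rep,\max)$) do under $\CS_1,\ldots,\CS_4$. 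The $({\bf2-1})$-avoidance condition is exactly what makes the "shift down by one" operations well defined and reversible, so the case analysis should be cleaner than for ascent sequences. Summing the four cases gives a functional equation for the $(\rep,\max,\ealm)$-style generating function over $\T_n$ that matches equation~\eqref{E:G2} specialized appropriately, forcing equality with the middle sum in~\eqref{E:rema2}.

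**Where the difficulty lies.** The routine part is the bookkeeping: checking in each of the (up to four) cases, for each of the two decompositions, how $\rep$, $\max$, $\ealm$/$\mpair$/$\zpair$ change, and confirming the substitutions on the $w$-variable match. The genuinely delicate step is the treatment of the $\CS_4$-analogue in both settings — the subset where the value $\max(s)$ (or, on the zero side, the value $1$) reappears after the segment following the last maximal/zero. For ascent sequences this required the two-step detour through $\CMcal{P}$ (Lemmas~\ref{L:case3}, \ref{L:c3}); the zero-side version needs the analogous claim that "sequences where $1$ occurs exactly once" are in length-and-statistic-controlled bijection with all ascent sequences of one-smaller length, and one must check the $\ealm$/$\zpair$ marginal is carried correctly, including the boundary conventions $\ealm(0,1,\dots,n-1)=\zpair(0,\dots,0)=0$. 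For $\T_n$, the subtlety is verifying that the inserted or deleted entry keeps the sequence $({\bf2-1})$-avoiding — in particular that reinstating the value $p$ in the $\CS_4$-step does not create a forbidden $({\bf2-1})$ occurrence with a later entry — which is where the $({\bf2-1})$-avoidance hypothesis must be used most carefully. Once these three case-four arguments are in hand, assembling the three functional equations and observing they coincide is mechanical.
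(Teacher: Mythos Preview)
Your high-level outline --- two parallel decompositions matched against the $\ealm$-decomposition of Section~\ref{sec:2}, with Lemmas~\ref{L:case3} and~\ref{L:c3} recycled --- is the right picture, but there is a genuine gap at the level of the marginal statistic. The statistics $\zpair$ and $\mpair$ are not value-based analogues of $\ealm$: $\ealm(s)$ is the \emph{value} of the entry right after the last maximal (which in an ascent sequence sits at position $\max(s)+1$), whereas $\zpair(s)$ is the \emph{index} of the last zero followed by a $1$, and $\mpair(s)$ is the index of the last maximal followed by a copy of itself. So ``split $\tilde\A$ according to the entry following the last zero'' does not track $\zpair$; and on the $\T_n$ side the situation is worse, since maximals of a $(\mathbf{2{-}1})$-avoiding inversion sequence need not form an initial increasing run (e.g.\ $(0,0,2,2,0,5)\in\T_6$), so ``the entry after the last maximal'' does not sit at a fixed position and bears no direct relation to $\mpair$. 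Consequently there is no $\CS_1,\ldots,\CS_4$-style split on either side that records $\zpair$ or $\mpair$ through a simple $w$-substitution, and the functional-equation matching you propose cannot be carried out as stated (you already note the kernel step destroys the $w$-dependence).

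The paper's proof does proceed by parallel decompositions and induction on $|s|-\max(s)$, but of a different shape. It introduces auxiliary statistics $\mpos$ and $\zpos$ locating the first ``critical'' position relative to the maximals/zeros, and splits $\T'$ into $\CMcal{J}_1\cup\CMcal{J}_2$ and $\tilde\A$ into $\CMcal{R}_1\cup\CMcal{R}_2$ according to whether this auxiliary statistic vanishes; these two pieces are matched against $\CS_1\cup\CS_2\cup\CS_3$ and $\CS_4$ respectively. The $\CMcal{J}_1,\CMcal{R}_1$ pieces are handled by showing the fibre over each value of $\mpair$ (resp.\ $\zpair$) has constant size via explicit bijections $C_i\leftrightarrow C_{i+1}$, $B_i\leftrightarrow B_{i+1}$ (Lemmas~\ref{L:d1}, \ref{L:c2}), reducing to length $n-1$. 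The $\CMcal{J}_2,\CMcal{R}_2$ pieces need multi-step bijections $\vartheta,\theta$ built from iterated local moves $\CMcal{M}_0,\CMcal{M}_1,\CMcal{M}_2$ and $\CMcal{Z}_0,\CMcal{Z}_1,\CMcal{Z}_2$ (Lemmas~\ref{L:d2}, \ref{L:T3}); these, together with the reductions $\psi,\varphi$ through the classes $\CMcal{F},\CMcal{G}$ (Lemmas~\ref{L:d4}, \ref{L:c4}), are the missing ingredients that make the induction close. None of it is a functional-equation argument.
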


\begin{remark}
We cannot establish the second equality of~\eqref{E:rema2}  via the approach in Section~\ref{sec:3}.
\end{remark}

The rest of this section is devoted to a bijective proof,  that makes inductive sense, of Theorem~\ref{T:main3}. 

\subsection{The  structure of $({\bf2-1})$-avoiding inversion sequences}

Before we prove the first equality of~\eqref{E:rema2}, we need to show some auxiliary lemmas.
\begin{lemma}\label{L:c1}
For any $0\le j\le p-1$, it holds that
\begin{align*}
&\,\quad \vert\{s\in \A_n\cap(\CMcal{S}_1\cup\CMcal{S}_2\cup\CMcal{S}_3): \ealm(s)=j,\rep(s)=k, \max(s)=p\}\vert\\
&=\vert\{s\in \A_{n-1}: \rep(s)=k-1, \max(s)=p\}\vert.
\end{align*}
\end{lemma}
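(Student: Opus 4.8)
The plan is to exhibit an explicit bijection between the left-hand and right-hand sets by combining the three case-maps already constructed in Section~\ref{sec:2}. Observe that $\CMcal{S}_1$, $\CMcal{S}_2$, $\CMcal{S}_3$ partition the set of ascent sequences $s$ with $|s|>\max(s)$, $|s|\ge 2$, according to the three conditions: $|s|=\max(s)+1$; or $|s|\ge\max(s)+2$ with $s_{\max(s)+1}\ge s_{\max(s)+2}$; or $|s|\ge\max(s)+2$ with $s_{\max(s)+1}<s_{\max(s)+2}$ and $\max(s)$ not reappearing after position $\max(s)+1$. In Lemmas~\ref{L:case1}--\ref{L:case3a} each of these subsets was handled by a deletion operation that strips off the entry $i=\ealm(s)$ sitting right after the last maximal; in each case this operation lands inside $\A_{n-1}$, keeps $\max$ fixed, and drops $\rep$ by exactly one. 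So the first step is to extract from those three lemmas a single, uniform deletion map
$$
\psi\colon \{s\in \A_n\cap(\CMcal{S}_1\cup\CMcal{S}_2\cup\CMcal{S}_3):\max(s)=p\}\longrightarrow \{t\in \A_{n-1}:\max(t)=p\},
$$
defined by: write $s=(0,1,\ldots,p-1,i,s_{p+2},\ldots,s_n)$, delete the entry $i$, and decrement by one each later entry that exceeds $p$ (the decrement is vacuous for $\CMcal{S}_1$ and $\CMcal{S}_2$, where $p$ never reappears, and is exactly the map $\hat s'$ from Lemma~\ref{L:case3a} for $\CMcal{S}_3$).

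The second step is to record the statistic behaviour, which is already spelled out in the three lemmas: in all three cases $\rep(s)=\rep(\psi(s))+1$ and $\max(s)=\max(\psi(s))=p$. This immediately gives the inclusion ``$\le$'' in the claimed cardinality identity, once we also argue that every $t\in\A_{n-1}$ with $\max(t)=p$ arises: given such a $t=(0,1,\ldots,p-1,t_{p+1},\ldots,t_{n-1})$, we reinsert the value $j$ directly after position $p$, incrementing by one each subsequent entry that is $\ge p$ iff $p$ already occurred in $t$ among positions $>p$ — this is precisely the inverse described in the ``addition'' discussions of Lemmas~\ref{L:case2} and~\ref{L:case3a}. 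One checks the reinserted sequence is again an ascent sequence (the bound $s_{p+1}=j\le p-1\le\asc(s_1,\dots,s_p)$ is automatic since the prefix is $0,1,\ldots,p-1$) and that it lands in $\CMcal{S}_1$ if $n-1=p$, in $\CMcal{S}_2$ if $t_{p+1}\le j$, and in $\CMcal{S}_3$ if $t_{p+1}>j$ and $p\notin(t_{p+1},\ldots,t_{n-1})$ — noting that the case $p\in(t_{p+1},\dots,t_{n-1})$ together with $t_{p+1}>j$ is exactly what $\CMcal{S}_4$ would require and is excluded here, so the three target conditions are exhaustive and mutually exclusive. Hence $\psi$ is a bijection, and restricting to $\ealm(s)=j$ and $\rep(s)=k$ on the source corresponds exactly to $\rep(t)=k-1$ on the target with no constraint on $\ealm(t)$, which is the statement.

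\textbf{Anticipated obstacle.} The only delicate point is bookkeeping the membership in $\CMcal{S}_1\cup\CMcal{S}_2\cup\CMcal{S}_3$ versus $\CMcal{S}_4$: one must verify that the reinsertion map never produces an element of $\CMcal{S}_4$ and that, conversely, deleting $i$ from an $\CMcal{S}_4$-sequence does \emph{not} decrease $\max$ (it does, in fact, which is why $\CMcal{S}_4$ is treated separately via $\xi$ in Lemma~\ref{L:c3}). Making this dichotomy airtight — in particular confirming that ``$p$ reappears after position $p+1$'' is the precise obstruction, and that in the three admissible cases the value $p$ either never reappears ($\CMcal{S}_1,\CMcal{S}_2,\CMcal{S}_3$) so that $\max$ is genuinely preserved under deletion — is where I expect to spend the bulk of the (still short) argument; everything else is a direct appeal to the constructions already in place.
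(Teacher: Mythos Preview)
Your overall strategy---piecing together the deletion maps from Lemmas~\ref{L:case1}, \ref{L:case2}, \ref{L:case3a} into a single bijection from $A_j$ onto $\{t\in\A_{n-1}:\rep(t)=k-1,\max(t)=p\}$---is sound in principle, and it is a genuinely different route from the paper's. The paper instead first builds a bijection $A_i\to A_{i+1}$ (replace the entry $i$ at position $p+1$ by $i+1$, decrementing all entries $d>p$ precisely when $s_{p+2}=i+1$) to show that $|A_j|$ is independent of $j$, and then computes $|A_{p-1}|$ directly by deleting the entry $p-1$ at position $p+1$.

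However, your write-up contains a real error. You assert that the decrement of entries exceeding $p$ is vacuous for $s\in\CS_2$ because ``$p$ never reappears''. This is false on two counts: entries equal to $p$ \emph{can} reappear in a $\CS_2$-sequence, and, more to the point, entries strictly larger than $p$ can occur there as well. For instance $s=(0,1,2,0,0,1,2,3,4)\in\CS_2$ has $p=3$, $j=\ealm(s)=0$, and $s_9=4>p$. Applying your uniform ``delete then decrement'' map yields $(0,1,2,0,1,2,3,3)$, which has $\rep=4=\rep(s)$; so $\rep$ fails to drop by one. The map in Lemma~\ref{L:case2} does \emph{not} decrement, and that is essential for the $\rep$ bookkeeping.

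The repair is straightforward: keep the map genuinely piecewise (no decrement for $\CS_1$ and $\CS_2$; decrement for $\CS_3$, exactly as in the three lemmas). Then the images are disjoint and exhaust the target: the $\CS_2$-map lands precisely on those $t\in\A_{n-1}$ with $\max(t)=p$ and $\ealm(t)\le j$, while the $\CS_3$-map lands on those with $\ealm(t)>j$ (and $\CS_1$ covers the degenerate case $n-1=p$). Correspondingly, your inverse must branch on whether $\ealm(t)\le j$ or $\ealm(t)>j$, \emph{not} on whether $p$ occurs among $t_{p+1},\ldots,t_{n-1}$; those two conditions are not equivalent.
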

\begin{proof}
For each $0\le j\le p-1$, let
\begin{align*}
A_{j}&=\{s\in \A_n\cap(\CS_1\cup\CS_2\cup\CS_3): \ealm(s)=j, \,\rep(s)=k,\max(s)=p\}.
\end{align*}
We aim to show that $\vert A_{i}\vert=\vert A_{i+1}\vert$ for any $0\le i<p-1$. To this end, we construct a one-to-one correspondence between the sets $A_i$ and $A_{i+1}$.

For any $s\in A_i$, we replace $i$ by $i+1$ on the $(p+1)$-th position. If $s_{p+2}=i+1$, then we further replace entries $d$ by $d-1$ whenever $d>p$. This yields an ascent sequence $s^*$ from $A_{i+1}$. It is evident that this transformation is reversible, that is, for any $s^*\in A_{i+1}$, we replace $i+1$ by $i$ on the $(p+1)$-th position. If $s^*_{p+2}=i+1$, then we further replace $d$ by $d+1$ whenever $d\ge p$.
In consequence, $\vert A_i\vert=\vert A_{i+1}\vert$ for all $0\le i<p-1$, hence $\vert A_i\vert=\vert A_{p-1}\vert$ for all $0\le i<p$. It remains to count $|A_{p-1}|$. Removing the $(p+1)$-th entries of all sequences from $A_{p-1}$ yields all ascent sequences $s\in\A_{n-1}$ such that $\rep(s)=k-1$ and $\max(s)=p$. Hence the proof is complete.
\end{proof}

 Let $\CMcal{F}$ be the set of $({\bf2-1})$-avoiding inversion sequences $s$  such that $\mpair(s)<\max(s)-1$ and the $(\mathsf{mpair}(s)+1)$-th maximal is either the last entry or followed immediately by a maximal. 
\begin{lemma}\label{L:d4}
There is a bijection $\psi: \CMcal{T}_{n}\cap \CMcal{F}\rightarrow\CMcal{T}_{n-1}$ such that $\max(\psi(s))=\mathsf{max}(s)-1$, $\rep(\psi(s))=\rep(s)$ and $\mpair(\psi(s))=\mpair(s)$.
\end{lemma}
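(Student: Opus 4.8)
The plan is to mimic, on the $({\bf2-1})$-avoiding inversion sequence side, the decomposition that was carried out for ascent sequences in Section~\ref{sec:2}, and in particular the bijections $\phi$ and $\xi$ of Lemmas~\ref{L:case3} and~\ref{L:c3}. First I would unwind the definition of $\CMcal{F}$: a sequence $s\in\CMcal{T}_n\cap\CMcal{F}$ has $p:=\max(s)$ maximals, occurring (by the $({\bf2-1})$-avoidance) exactly as the initial strictly increasing run, say at positions $k_0<\cdots<k_{p-1}$; writing $m:=\mpair(s)$, the $(m+1)$-st maximal (position $k_{m+1}$) is either the last entry of $s$ or is immediately followed by another maximal, while the $m$-th maximal at position $k_m$ is immediately followed by an entry equal to $k_m-1$ (i.e. a repeat of that maximal's value). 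The point of the hypothesis $m<p-1$ is that $k_{m+1}$ genuinely exists, so there is a ``redundant'' maximal that can be deleted.

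The construction of $\psi$ then proceeds by locating this distinguished maximal and removing it, exactly as $\phi$ in Lemma~\ref{L:case3} removes the last maximal of a sequence in $\CMcal{P}$: delete the entry at position $k_{m+1}$ (whose value is $k_{m+1}-1$), and decrement by one every subsequent entry $s_\ell$ with $s_\ell>k_{m+1}-1$ so that the result again lies in $\I_{n-1}$. I would check in turn that (i) the output is still $({\bf2-1})$-avoiding — the only pairs whose gap could be corrupted are those straddling the deleted position, and the decrementing rule is tailored to preserve the relative order of all surviving entries, so no new $({\bf2-1})$ pattern is created; (ii) $\max$ drops by exactly $1$, since precisely one maximal has been removed and the remaining maximals, being the entries $0,1,\ldots$ of the initial run, are unaffected up to position $k_m$ and correctly relabelled afterwards; (iii) $\rep$ is unchanged, because the deleted entry equalled the value $k_{m+1}-1$ which still occurs (it was the $(m+1)$-st maximal's value, present now as the $m$-th position's repeat or as the next maximal), so no distinct value is lost and none is created; and (iv) $\mpair$ is preserved: the $m$-th maximal and its following repeat survive intact, so $\mpair(\psi(s))\ge m$, and the maximals with index $>m$ in $\psi(s)$ correspond to the maximals with index $>m+1$ in $s$, none of which was followed by a repeat of its own value (this is where the ``followed by a maximal or last'' clause of $\CMcal{F}$ is used), giving $\mpair(\psi(s))=m$.

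For surjectivity and injectivity I would exhibit the inverse directly: given $t\in\CMcal{T}_{n-1}$ with $\max(t)=p-1$ and $m:=\mpair(t)$ (taking $t=(0,1,\ldots,n-2)$ to $(0,1,\ldots,n-1)$ as the degenerate case), insert a new maximal immediately after the $m$-th maximal's repeat block — more precisely, after position $k_m+1$ of $t$ — with value $k_m+1$, and increment every later entry $\ge k_m+1$ by one; one verifies this lands in $\CMcal{T}_n\cap\CMcal{F}$ and is two-sided inverse to $\psi$. The main obstacle I expect is bookkeeping rather than ideas: pinning down exactly which positions get shifted and confirming that the $({\bf2-1})$-avoidance is neither broken by the deletion nor by the reinsertion, and that the rather delicate statistic $\mpair$ (which is a ``last index with a local property'' and so sensitive to what happens at \emph{every} maximal beyond the $m$-th) comes out exactly equal rather than merely bounded. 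Carefully isolating the role of the membership condition $s\in\CMcal{F}$ — it is precisely what guarantees the maximal being deleted does not itself carry a following repeat, which would otherwise make $\mpair$ jump — is the crux of the argument.
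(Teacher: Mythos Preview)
Your forward construction --- delete the $(m+1)$-th maximal and decrement later entries above its value --- does in fact coincide with the paper's $\psi$ (which deletes the $(m+1)$-th maximal when it is the last entry, and the $(m+2)$-th maximal otherwise; the two descriptions agree because in the second case those two maximals sit at consecutive positions). But two genuine errors undermine the proposal. First, your structural claim that the maximals of a $({\bf2-1})$-avoiding inversion sequence ``occur exactly as the initial strictly increasing run'' is false: that is a property of ascent sequences, not of $\CMcal{T}_n$. For instance $(0,0,2,2,0,5,5,3)\in\CMcal{T}_8$ has its maximals at positions $1,3,6$, scattered throughout. Several of your verifications (of $\max$ and $\rep$) lean on this incorrect picture and would need to be redone without it.

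Second, and more concretely, your proposed inverse is wrong. Take $t=(0,0,0,2)\in\CMcal{T}_4$, so $m=\mpair(t)=0$ and $k_0=1$; your recipe inserts a new maximal after position $k_0+1=2$ and increments, producing $(0,0,2,0,3)$. This sequence is \emph{not} in $\CMcal{F}$: its first maximal (position~$3$) is neither the last entry nor immediately followed by a maximal. The genuine $\psi$-preimage of $t$ is $(0,0,0,2,4)$. The correct inverse must branch on whether $\mpair(t)=\max(t)-1$ (in which case one appends a new maximal at the very end) or $\mpair(t)<\max(t)-1$ (in which case one inserts the new maximal immediately after the $(m+1)$-th maximal of $t$, not after the $m$-th maximal's repeat).
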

\begin{proof}
For any $s\in \CMcal{T}_n\cap \CMcal{F}$ with $\mathsf{mpair}(s)=j$, if the $(j+1)$-th maximal is the last entry, then we simply remove it from $s$; otherwise, let $x$ be the $(j+2)$-th maximal of $s$, then we remove the $(j+2)$-th maximal $x$ from $s$ and for all $i\ge x$, we replace $i$ by $i-1$. In both cases, we are led to a  sequence $s^*=\psi(s)\in\T_{n-1}$ such that $\mathsf{rep}(s^*)=\mathsf{rep}(s)$, $\mathsf{max}(s^*)=\mathsf{max}(s)-1$ and $\mathsf{mpair}(s)=\mathsf{mpair}(s^*)$. It is easily seen that $\psi$ is a bijection.
\end{proof}
Next we introduce $\mathsf{mpos}(s)$ in order to divide the set of all $({\bf2-1})$-avoiding inversion sequences into two disjoint subsets.
\begin{definition}
Let $s$ be a $({\bf2-1})$-avoiding inversion sequence such that $\mathsf{max}(s)=p$ and $\mathsf{mpair}(s)=j$ ($0\le j<p$).  Suppose that for $0\le i\le p-1$, the $i$-th maximal of $s$ is located at the $k_{i}$-th position.  We call each position $\ell$, $\ell\ge k_j+2$ and $s_{\ell}=\ell-2$, a {\em critical maximal position} of $s$ and define
\begin{itemize}
\item $\mathsf{mpos}(s)=m+1$ if $m$ is the maximal integer such that the leftmost critical maximal position is greater than $k_{m}$;
\item $\mathsf{mpos}(s)=0$ if $s$ has no critical maximal positions.
\end{itemize}
For example, $\mathsf{mpos}(0,0,2,2,0,2,5)=2$ (as the sequence contains only one critical maximal   position at $7$) and $\mathsf{mpos}(0,0,2,2,0,2,4)=0$.
\end{definition}

Let $\T$ be the set of all $({\bf2-1})$-avoiding inversion sequences. We divide the set $\T':=\{s\in\T:|s|>\max(s)\}$ into the following disjoint subsets:
$$
\CMcal{J}_1:=\{s\in\CMcal{T}':  \mpos(s)=0\}\quad\text{and}\quad
\CMcal{J}_2:=\{s\in\CMcal{T}': \mpos(s)\ne0\}.
$$

\begin{lemma}\label{L:d1}
For any $0\le j\le p-1$, it holds that
\begin{align*}
&\,\quad \vert\{s\in \CMcal{T}_n\cap\CMcal{J}_{1}: \mpair(s)=j,\,\rep(s)=k,  \max(s)=p\}\vert\\
&=\vert\{s\in \CMcal{T}_{n-1}: \rep(s)=k-1,\max(s)=p\}\vert.
\end{align*}
\end{lemma}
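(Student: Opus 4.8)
The plan is to follow the template of the proof of Lemma~\ref{L:c1}, with $\CMcal{J}_1$ playing in the $\CMcal{T}$-world the role that $\CS_1\cup\CS_2\cup\CS_3$ plays in the $\A$-world. Fix $n$, $k$, $p$, and for $0\le j\le p-1$ set
$$
B_j:=\{s\in\CMcal{T}_n\cap\CMcal{J}_1:\mpair(s)=j,\ \rep(s)=k,\ \max(s)=p\}.
$$
First I would isolate the rigidity of a member $s\in\CMcal{T}_n\cap\CMcal{J}_1$. Let $k_0<k_1<\cdots<k_{p-1}$ be the positions of its maximals and $j=\mpair(s)$. The definition of $\mpair$ forces $s_{k_j}=s_{k_j+1}=k_j-1$ (the ``pair''), while $\mpos(s)=0$ says that no position $\ell\ge k_j+2$ satisfies $s_\ell=\ell-2$. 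Combined with the elementary $({\bf2-1})$-avoidance fact that the value $k-2$ cannot reappear after a maximal at position $k$, this shows that past position $k_j+1$ the only entries touching the top two values are the remaining maximals $k_{j+1},\dots,k_{p-1}$; every other entry $s_\ell$ with $\ell\ge k_j+2$ satisfies $s_\ell\le \ell-3$.

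\textbf{Step 1 ($|B_i|=|B_{i+1}|$ for $0\le i<p-1$).} Imitating the move used in Lemma~\ref{L:c1}, I would shift the ``pair'' from immediately after the $i$-th maximal to immediately after the $(i+1)$-th one: delete the copy $s_{k_i+1}=k_i-1$; move the block lying strictly between the $i$-th and $(i+1)$-th maximals one place to the left, relabelling it (in particular decreasing the $(i+1)$-th maximal by $1$ so it is maximal at its new site) so as to restore a valid $({\bf2-1})$-avoiding inversion sequence; and insert one copy of the new value of the $(i+1)$-th maximal directly after it. The rigidity from the first paragraph (the block carries no maximal and no almost-maximal entry) makes this operation well defined; one checks that it lands in $\CMcal{J}_1$ with $\mpair=i+1$, $\rep=k$, $\max=p$, and that it is reversible. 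Hence all the $B_j$ have a common cardinality, equal to $|B_{p-1}|$.

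\textbf{Step 2 (evaluating $|B_{p-1}|$).} If $s\in B_{p-1}$ the last maximal, at $k_{p-1}$, is immediately followed by a copy of $k_{p-1}-1$. Deleting this copy yields $s'\in\CMcal{T}_{n-1}$ with $\rep(s')=k-1$, and since $\mpos(s)=0$ forbids any almost-maximal entry beyond position $k_{p-1}+1$, no new maximal is created by the left shift, so $\max(s')=p$. Conversely, given $t\in\CMcal{T}_{n-1}$ with $\rep(t)=k-1$ and $\max(t)=p$, inserting one copy of $m-1$ just after the last maximal (at position $m$) of $t$ produces an element of $B_{p-1}$ — the inserted value equals an existing one, the $({\bf2-1})$-fact above prevents a new bad pattern, no new maximal appears, and the last maximal acquires a pair, with $\mpos$ still $0$. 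Thus $|B_{p-1}|=|\{s\in\CMcal{T}_{n-1}:\rep(s)=k-1,\ \max(s)=p\}|$, which is the assertion.

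The delicate point is Step~1: one must verify that the relabelling keeps the sequence $({\bf2-1})$-avoiding, does not create a critical maximal position (so that $\mpos$ stays $0$), and makes $\mpair$ exactly $i+1$ rather than something larger. All three should fall out of the rigidity recorded in the first paragraph, but this is the only place where genuine care is needed; should a uniform description of the relabelling prove awkward, one can instead biject each $B_j$ with the target set directly by a ``delete the pair and repair the later maximals'' recipe, at the cost of a slightly longer $({\bf2-1})$-avoidance check.
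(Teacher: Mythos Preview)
Your global strategy — prove $|B_i|=|B_{i+1}|$ for all $0\le i<p-1$ and then evaluate $|B_{p-1}|$ directly — is exactly the paper's. Your Step~2 is correct and coincides with the paper's (the paper phrases it as ``remove the $(p-1)$-th maximal'', but since that maximal and its copy are adjacent and equal, this is the same operation).

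Step~1, however, has a genuine gap: your ``decrease the $(i+1)$-th maximal by $1$ and duplicate it'' move can destroy $({\bf2-1})$-avoidance, and the rigidity you record does not prevent this. Take $s=(0,0,2,0,4)\in B_0$ (with $n=5$, $p=3$, $k=2$, $\mpos(s)=0$). Your recipe deletes $s_2=0$, lowers the maximal $2$ to $1$, and inserts a second $1$, yielding $(0,1,1,0,4)$; but $(s_2,s_4)=(1,0)$ is a $({\bf2-1})$ pattern. The point is that your rigidity only bounds later entries from \emph{above} (by $\ell-3$); it says nothing about whether the specific value $x_{i+1}-2$ occurs later, which is exactly what you would need.

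The paper avoids this by never relabelling the $(i+1)$-th maximal. Its bijection $C_i\to C_{i+1}$ splits into two cases according to whether $s\in\CMcal{F}$. In the generic case $s\notin\CMcal{F}$ (the $(i+1)$-th maximal is followed by some $y\le x_{i+1}-2$), one removes the copy of $x_i$, moves $y$ to just \emph{before} $x_{i+1}$, and duplicates $x_{i+1}$ in place — all values are unchanged, so $({\bf2-1})$-avoidance is immediate. In the boundary case $s\in\CMcal{F}$ (the $(i+1)$-th maximal is last or is immediately followed by the $(i+2)$-th maximal), a different move is needed: remove $x_{i+1}$, increase every entry in $[x_i,x_{i+1}-1]$ by one, and insert $x_i$ just before the (new) leftmost $x_i+1$. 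Your single recipe collapses these two cases and fails on the first; the fallback you sketch (``delete the pair and repair the later maximals'') would need essentially the same case split to be made precise.
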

\begin{proof}
For every $0\le j\le p-1$, let
\begin{align*}
C_j=\{s\in \CMcal{T}_n\cap\CMcal{J}_{1}:  \mpair(s)=j, \rep(s)=k, \max(s)=p\}.
\end{align*}
We aim  to show that $\vert C_i\vert=\vert C_{i+1}\vert$ for each $0\le i<p-1$. To this end, we construct a one-to-one correspondence between the set $C_i$ and the set $C_{i+1}$. For any $s\in C_i$, let $x_{\ell}$ be the $\ell$-th maximal of $s$ and let $y$ be the entry (if any) that is right after the $(i+1)$-th maximal $x_{i+1}$ of $s$. We distinguish two cases.  

If $s\in C_i\cap(\T\setminus\CMcal{F})$, then  by definition $y$ must exist in $s$ and $y\leq x_{i+1}-2$. The sequence $s^*$ is constructed from $s$ as follows: remove the integer that is right after the $i$-th maximal $x_i$ of $s$, replace $y$ by $x_{i+1}$ and insert $y$ right before the $(i+1)$-th maximal $x_{i+1}$; see Fig.~\ref{F:b1}.

\begin{figure}[ht]
\centering
\includegraphics[scale=1.0]{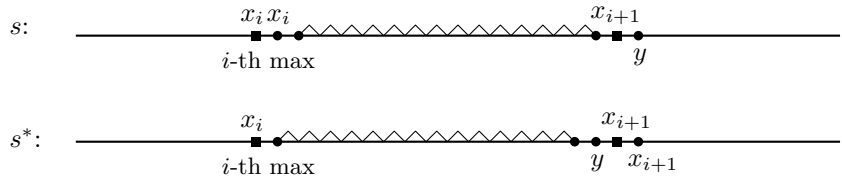}
\caption{The bijection between the sets $C_i$ and $C_{i+1}$ if $y\le x_{i+1}-2$, where all maximal elements are marked with squares. \label{F:b1}}
\end{figure}

If $s\in C_i\cap\CMcal{F}$, then by definition, either $y=x_{i+2}=x_{i+1}+1$ or $x_{i+1}$ is the last entry of $s$. The construction of $s^*$ from $s$ is given as follows: remove the $(i+1)$-th maximal $x_{i+1}$, replace (entries) $d$ by $d'=d+1$ if $x_{i}\le d\le x_{i+1}-1$, insert $x_i$ right before the leftmost $x_i'=x_{i}+1$; see Fig.~\ref{F:b2}.

\begin{figure}[ht]
\centering
\includegraphics[scale=1.0]{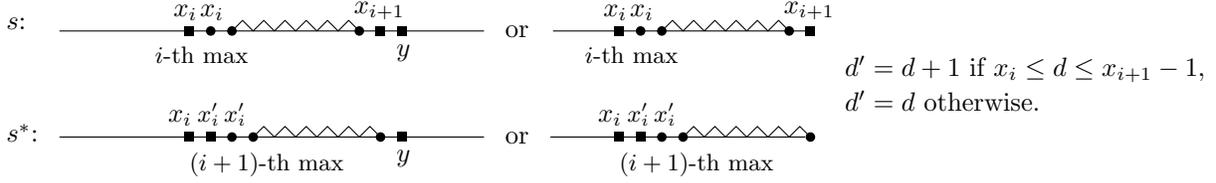}
\caption{The bijection between the sets $C_i$ and $C_{i+1}$ when $y=x_{i+2}$ or $x_{i+1}$ is the last entry of $s$, where maximal elements are marked with squares. \label{F:b2}}
\end{figure}

In both cases the construction $s\mapsto s^*$ leads to (notice that $s\in\CMcal{J}_1$)
\[
\begin{array}{rllll}
&\mathsf{rep}(s)=\mathsf{rep}(s^*), &\mathsf{mpos}(s)=\mathsf{mpos}(s^*)=0,\\
&\mathsf{max}(s)=\mathsf{max}(s^*), & \mathsf{mpair}(s)=\mathsf{mpair}(s^*)-1.
\end{array}
\]
In consequence, $s^*\in C_{i+1}$. The map $s\mapsto s^*$ is reversible, because for any $s^*\in C_{i+1}$, if the $(i+1)$-th maximal follows immediately after the $i$-th maximal, then $s^*$ must come from the construction shown in Fig.~\ref{F:b2}; otherwise $s^*$ must come from the construction shown in Fig.~\ref{F:b1}. It follows that $s\mapsto s^*$ is a bijection and $|C_i|=|C_{i+1}|$ for each $0\le i<p-1$. Hence $\vert C_i\vert=\vert C_{p-1}\vert$ for all $0\le i<p$. It remains to count the number $|C_{p-1}|$. Removing the $(p-1)$-th maximals of all sequences in $C_{p-1}$ yields all sequences $s\in\CMcal{T}_{n-1}$ such that $\mathsf{rep}(s)=k-1$ and $\mathsf{max}(s)=p$, which completes the proof.
\end{proof}
The following is our main bijection for $\CMcal{T}_n$, which consists  of a series of fundamental transformations. The parameter $\mpos$ plays an important role in our construction. 
\begin{lemma}\label{L:d2}
 Let $\CMcal{F}^c=\T\setminus\CMcal{F}$. There is a bijection
\begin{align*}
\vartheta: \{(s,i):s\in \CMcal{T}_n\cap\CMcal{F}^c, i<\mpair(s)\}\rightarrow \{s\in \CMcal{T}_n\cap\CMcal{J}_{2}: \mpair(s)=i\}
\end{align*}
such that if $\vartheta(s,i)=\hat{s}$, then $\rep(\hat{s})=\rep(s)$ and $\max(\hat{s})=\max(s)-1$.
\end{lemma}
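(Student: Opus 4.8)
The plan is to construct $\vartheta$ together with an explicit two‑sided inverse, realising it as the $\T$‑side counterpart of the map $\xi$ of Lemma~\ref{L:c3} read in the reverse direction: just as $\xi$ exhibits $\CS_4$ as pairs $(s^*,i)$ with $s^*\in\CMcal{P}^c$ and with $\max$ dropping by one under $\xi^{-1}$, so $\vartheta$ should exhibit $\T_n\cap\CMcal{J}_2$ as pairs $(s,i)$ with $s\in\CMcal{F}^c$ and $\max$ dropping by one, the statistic $\mpair$ on $\CMcal{J}_2$‑sequences taking over the role that $\ealm$ plays on $\CS_4$‑sequences. The genuinely new feature, compared with the ascent‑sequence setting, is that the maximals of a $({\bf2-1})$‑avoiding inversion sequence are scattered through the word rather than forming an initial run; this forces $\vartheta$ to be assembled from a chain of elementary surgeries, and the parameter $\mpos$ is precisely the bookkeeping device that keeps track of the intermediate configurations, both for building $\vartheta$ and for reversing it.

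In more detail, write $x_0<x_1<\cdots<x_{p-1}$ (with $p=\max(s)$) for the values of the successive maximals of $s\in\T_n\cap\CMcal{F}^c$, at positions $k_0<k_1<\cdots<k_{p-1}$, and put $j=\mpair(s)$, so that $s_{k_j+1}=x_j=k_j-1$, and — since $s\in\CMcal{F}^c$, i.e. $s\notin\CMcal{F}$ — either $j=p-1$, or $j<p-1$ and the entry immediately after the $(j+1)$-th maximal is not itself a maximal. The core elementary surgery, applicable when $j=p-1$, demotes the last (paired) maximal $x_j$ by resetting its value to $x_{j-1}$, so that it becomes a repeated copy of the preceding maximal (this keeps $\rep$ unchanged), and relabels the entries lying above $x_j$ accordingly; the companion entry $s_{k_j+1}=x_j$ thereby lands at a critical maximal position of the new word, whose number of maximals has decreased by one. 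When $i=j-1$ this single surgery produces $\hat s$; for smaller $i$ one follows it by a sequence of further elementary moves, each of which converts one more paired maximal into a repeated entry and, compensatingly, promotes a critical maximal position back to a genuine maximal, keeping $\max$ fixed and stepping $\mpos$ downward; the ``$j<p-1$'' alternative of $\CMcal{F}^c$ is dealt with by a suitable modification of the first surgery. One then checks that the output $\hat s$ lies in $\T_n$, is $({\bf2-1})$‑avoiding, and satisfies $\rep(\hat s)=\rep(s)$, $\max(\hat s)=\max(s)-1$, $\mpair(\hat s)=i$, and $\mpos(\hat s)\ne 0$, so that $\hat s\in\T_n\cap\CMcal{J}_2$.

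For bijectivity one writes the inverse explicitly: given $\hat s\in\T_n\cap\CMcal{J}_2$ with $\mpair(\hat s)=i$, read off $\mpos(\hat s)$, find the leftmost critical maximal position, run the elementary moves backwards the number of times dictated by $\mpos(\hat s)$, and finally undo the core surgery by re‑inflating the appropriate repeated entry to a maximal. The hypothesis $s\in\CMcal{F}^c$ on the source — either the last paired maximal is the final maximal, or the maximal right after the last paired one is itself followed by a non‑maximal — is exactly the condition that guarantees there is ``room'' to carry out this final re‑inflation, while $\hat s\in\CMcal{J}_2$ (the presence of a critical maximal position) is exactly what makes it possible to start the backward process; verifying that the two composites are the identity is then a routine, if lengthy, case analysis, organised to run parallel to the one already used in the proof of Lemma~\ref{L:d1}. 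The generating‑function formula for $\CMcal{J}_2$ that the lemma is designed to feed is then obtained exactly as the formula for $\CS_4$ was obtained from Lemma~\ref{L:c3}, after invoking $\CMcal{F}^c=\T\setminus\CMcal{F}$ together with Lemma~\ref{L:d4}.

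The main obstacle will be the step‑by‑step verification that $({\bf2-1})$‑avoidance is preserved: the delicate point is that demoting a maximal $x_m$ to the value $x_{m-1}$, and relabelling the entries above it, could a priori create a forbidden pattern — an occurrence of some value $c$ to the right of an occurrence of $c+1$ — and one has to use the structural constraints that $({\bf2-1})$‑avoidance itself imposes on the maximals to exclude this. A second, bookkeeping‑heavy obstacle is to determine precisely the value $\mpos(\hat s)$ produced by $\vartheta$, and thereby to confirm surjectivity, i.e. that every member of $\{\hat s\in\T_n\cap\CMcal{J}_2:\mpair(\hat s)=i\}$ is hit once and only once; here it is the use of the \emph{leftmost} critical maximal position in the definition of $\mpos$ that renders the inverse unambiguous, and the boundary cases $i=j-1$ versus $i<j-1$ and $j=p-1$ versus $j<p-1$ will need separate but parallel treatment.
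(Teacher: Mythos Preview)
Your high-level architecture is right and matches the paper: build $\vartheta(s,i)$ as a chain of $j-i$ elementary surgeries (where $j=\mpair(s)$), each lowering $\mpair$ by one while keeping $\rep$ and the length fixed, and use $\mpos$ together with membership in $\CMcal{F}$ versus $\CMcal{F}^c$ to run the chain backwards. But the concrete surgeries you describe are not the ones that work.

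Your ``core surgery'' --- resetting the value at position $k_j$ from $x_j$ to $x_{j-1}$ and relabelling entries above $x_j$ --- does \emph{not} force $\mpair(\hat s)=j-1$. Take $s=(0,0,2,0,4,4)\in\CMcal{T}_6\cap\CMcal{F}^c$ with $j=\mpair(s)=2$, $x_1=2$, $x_2=4$, $k_2=5$. Your surgery produces $\hat s=(0,0,2,0,2,4)$, whose maximals sit at positions $1,3$; but the entry after the maximal at position $3$ is $0\neq 2$, so $\mpair(\hat s)=0$, not $1$. The paper's first move $\CMcal{M}_0$ is different: it \emph{deletes} the $j$-th maximal $x_j$ and \emph{inserts} a fresh copy of $x_{j-1}$ immediately after the $(j-1)$-th maximal. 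On the same example this yields $(0,0,2,2,0,4)$, which indeed has $\mpair=1$. The point is that the new ``paired'' copy must be adjacent to the $(j-1)$-th maximal, and your in-place value change does not achieve that unless $k_{j-1}+1$ already happened to carry the value $x_{j-1}$.

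The subsequent moves are also mis-specified. They do not ``promote a critical maximal position back to a genuine maximal'', nor do they uniformly step $\mpos$ downward. In the paper there are \emph{two} distinct continuations depending on whether $k_{j-2}+1=k_{j-1}$ or $k_{j-2}+1<k_{j-1}$: the first ($\CMcal{M}_1$) removes the $(j-1)$-th maximal, shifts values in the interval $[x_{j-1},x)$ down by one, and reinserts a suitable entry, decreasing $\mpos$ by one; the second ($\CMcal{M}_2$) swaps two entries straddling $k_{j-1}$, deletes one copy of $x_{j-1}$, and inserts $x_{j-2}$ after the $(j-2)$-th maximal, \emph{leaving $\mpos$ unchanged}. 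Consequently the inverse cannot be read off from $\mpos(\hat s)$ alone as you suggest; at each backward step one must check whether $\hat s\in\CMcal{F}$ (undo $\CMcal{M}_1$), or $\hat s\in\CMcal{F}^c$ with $\mpos(\hat s)=\mpair(\hat s)+1$ (undo $\CMcal{M}_0$, terminating), or $\hat s\in\CMcal{F}^c$ with $\mpos(\hat s)\ne\mpair(\hat s)+1$ (undo $\CMcal{M}_2$). So while your strategic outline is sound, the actual bijection has to be built from these three specific moves, and the verification of $({\bf2-1})$-avoidance and of the claimed values of $\mpair$, $\mpos$, $\max$, $\rep$ must be carried out separately for each.
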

\begin{proof}
Let $s\in\CMcal{T}_n\cap\CMcal{F}^c$ be a sequence with $\mathsf{mpair}(s)=j$ and $\mathsf{max}(s)=p+1$. Let $x_{\ell}$ be the $\ell$-th maximal of $s$, then either the $(j+1)$-th maximal $x_{j+1}$ is followed by an integer $y$ such that $y\le x_{j+1}-2$, or $x_{j}$ is the last maximal, i.e., $j=p$. For every pair $(s,i)$ where $0\le i<j\le p$, we start with constructing a new sequence $\hat{s}^{(1)}$ from the pair $(s,j-1)$. The sequence $\hat{s}^{(1)}$ is constructed from $s$ by the following step $\CMcal{M}_0$ (see Fig.~\ref{F:21}):
\begin{itemize}
\item remove the $j$-th maximal $x_j$;
\item insert $x_{j-1}$ right after the $(j-1)$-th maximal $x_{j-1}$.
\end{itemize}
To be precise, let $k_{\ell}$ be the position of the $\ell$-th ($0\leq \ell\leq p$) maximal of $s$. Then by the above construction, the leftmost critical maximal position of $\hat{s}^{(1)}$ is $k_j+1$. Hence $\hat{s}^{(1)}\in \CMcal{T}_n\cap\CMcal{J}_2$ with
\[
\begin{array}{rlllll}
&\mathsf{mpair}(\hat{s}^{(1)})=j-1=\mathsf{mpair}(s)-1, & \mathsf{max}(\hat{s}^{(1)})=p=\mathsf{max}(s)-1,\\
&\mathsf{mpos}(\hat{s}^{(1)})=j=\mathsf{mpair}(s), &\mathsf{rep}(\hat{s}^{(1)})=\mathsf{rep}(s).
\end{array}
\]
\begin{figure}[htbp]
\centering
\includegraphics[scale=1.0]{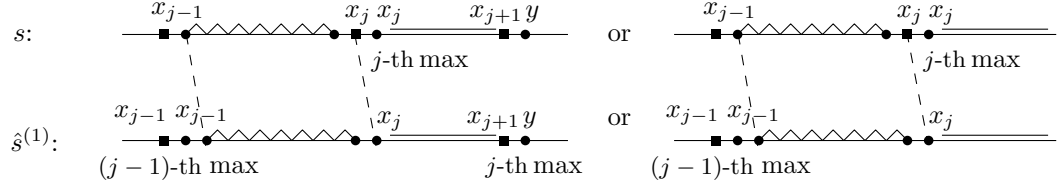}
\caption{The bijection from the pair $(s,j-1)$ to $\hat{s}^{(1)}$ where $y\le x_{j+1}-2$ or $x_{j}$ is the last maximal of $s$. The maximal elements are marked with  squares.\label{F:21}}
\end{figure}

If $i=j-1$, we stop and set $\hat{s}=\vartheta(s,j-1)=\hat{s}^{(1)}$; otherwise $i<j-1$, and we continue with the pair $(\hat{s}^{(1)},j-2)$. We need to distinguish two cases.

If $k_{j-2}+1=k_{j-1}$, then the sequence $\hat{s}^{(2)}$ is constructed from $\hat{s}^{(1)}$ by the following step $\CMcal{M}_1$ (see Fig.~\ref{F:22}). If $x_{j-1}$ is not the last maximal of $\hat{s}^{(1)}$, let $x$ be the $j$-th maximal of $\hat{s}^{(1)}$, that is, $x=x_{j+1}$ in Fig.~\ref{F:21}, then
\begin{itemize}
\item remove the $(j-1)$-th maximal $x_{j-1}$;
\item replace $d$ by $d'=d-1$ if $x_{j-1}\le d<x$;
\item insert $(x-1)$ right before the leftmost $x$;
\end{itemize}
otherwise $x_{j-1}$ is the last maximal, then
\begin{itemize}
\item remove the $(j-1)$-th maximal $x_{j-1}$;
\item replace $d$ by $d'=d-1$ if $d\ge x_{j-1}$;
\item insert a new maximal as the last entry.
\end{itemize}
\begin{figure}[ht]
\centering
\includegraphics[scale=1.0]{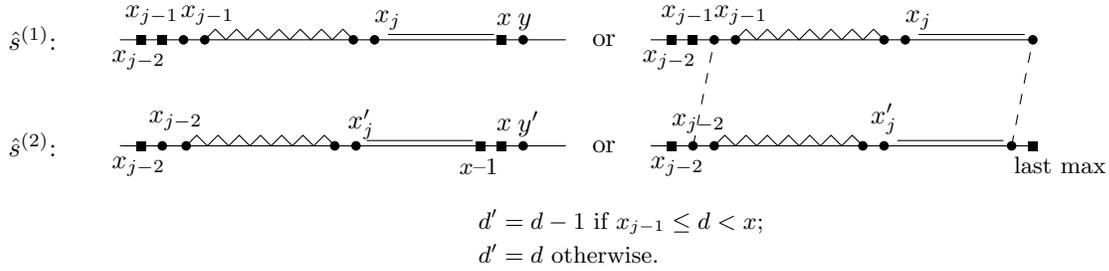}
\caption{The map $\hat{s}^{(1)}\mapsto\hat{s}^{(2)}$ when $k_{j-2}+1=k_{j-1}$ and the maximal elements are marked with  squares.\label{F:22}}
\end{figure}
In both cases $x_{j}\ge x_{j-1}+1$, so $x_j'=x_j-1$ and it can be easily seen that $k_j$ is the leftmost critical maximal position of $\hat{s}^{(2)}$. Furthermore, one can check that $\hat{s}^{(2)}\in\CMcal{T}_n\cap\CMcal{J}_{2}$ with
\[
\begin{array}{rlllll}
&\mathsf{mpair}(\hat{s}^{(2)})=j-2=\mathsf{mpair}(s)-2, &\mathsf{max}(\hat{s}^{(2)})=\mathsf{max}(s)-1,\\
&\mathsf{mpos}(\hat{s}^{(2)})=j-1=\mathsf{mpair}(s)-1, &\mathsf{rep}(\hat{s}^{(2)})=\mathsf{rep}(s).
\end{array}
\]


If $k_{j-2}+1<k_{j-1}$, then the sequence $\hat{s}^{(2)}$ is constructed from $\hat{s}^{(1)}$ by the following step $\CMcal{M}_2$ (see Fig.~\ref{F:23}):
\begin{itemize}
\item switch the integers on the $(k_{j-1}-1)$-th and the $(k_{j-1}+1)$-th positions;
\item remove the leftmost $x_{j-1}$;
\item insert $x_{j-2}$ right after the leftmost $x_{j-2}$.
\end{itemize}
\begin{figure}[ht]
\centering
\includegraphics[scale=1.0]{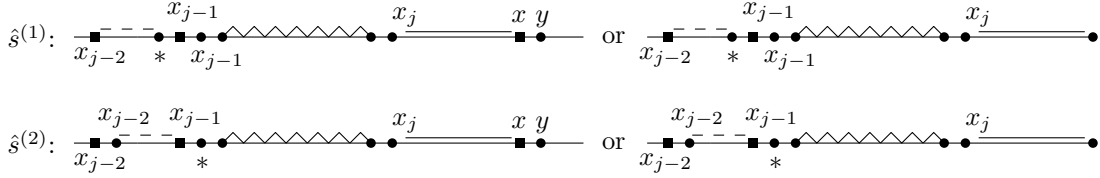}
\caption{The map $\hat{s}^{(1)}\mapsto\hat{s}^{(2)}$ when $k_{j-2}+1<k_{j-1}$, where $\ast$ denotes an integer less than $(x_{j-1}-1)$ and the maximal elements are marked with squares.\label{F:23}}
\end{figure}
By construction the leftmost critical maximal position of $\hat{s}^{(2)}$ is still $(k_j+1)$. One can easily check that $\hat{s}^{(2)}\in \CMcal{T}_n\cap\CMcal{J}_2$ with
\[
\begin{array}{rlllll}
&\mathsf{mpair}(\hat{s}^{(2)})=j-2=\mathsf{mpair}(s)-2, & \mathsf{max}(\hat{s}^{(2)})=\mathsf{max}(s)-1,\\
&\mathsf{mpos}(\hat{s}^{(2)})=j=\mathsf{mpair}(s), &\mathsf{rep}(\hat{s}^{(2)})=\mathsf{rep}(s).
\end{array}
\]

If $i=j-2$, we stop and set $\hat{s}=\vartheta(s,j-2)=\hat{s}^{(2)}$; otherwise $i<j-2$ and we continue with pair $(\hat{s}^{(2)},j-3)$. If $k_{j-3}+1=k_{j-2}$, we repeat step $\CMcal{M}_1$; otherwise we repeat step $\CMcal{M}_2$. We continue this process until we reach $i$ and we set $\hat{s}=\vartheta(s,i)=\hat{s}^{(j-i)}$ where $|\hat{s}|=|s|$, $\mpair(\hat{s})=i$, $\max(\hat{s})=\max(s)-1$ and $\rep(\hat{s})=\rep(s)$.

The construction $\vartheta$ is reversible. For any  sequence $\hat{s}\in\CMcal{T}_n\cap\CMcal{J}_2$ with $\mpair(\hat{s})=i$, if $\hat{s}\in \CMcal{F}$, then $\hat{s}$ is produced from step $\CMcal{M}_1$; otherwise if $\hat{s}\in \CMcal{F}^c$ and $\mathrm{mpos}(\hat{s})=i+1$, then $\hat{s}$ is produced from step $\CMcal{M}_0$. If $\hat{s}\in \CMcal{F}^c$ and $\mathrm{mpos}(\hat{s})\ne i+1$, then $\hat{s}$ is produced from step $\CMcal{M}_2$. This implies that $\vartheta$ is recursively reversible, and therefore $\vartheta$ is a bijection. Hence the proof is complete.
\end{proof}
\begin{example}

For $s=(0,0,0,3,4,5,5,7,1,5)\in\CMcal{T}_{10}\cap\CMcal{F}^c$ with $\mathsf{mpair}(s)=3$ and $i=0$, under the bijection $\vartheta$ we obtain that $\hat{s}^{(1)}=\vartheta(s,2)=(0,0,0,3,4,4,5,7,1,5)$,
$\hat{s}^{(2)}=\vartheta(s,1)=(0,0,0,3,3,4,6,7,1,4)$, and $\hat{s}^{(3)}=\vartheta(s,0)=(0,0,0,3,0,4,6,7,1,4)$.
\end{example}

Now we are ready to prove the first equality of~\eqref{E:rema2}.

\begin{proof}[{\bf Proof of the first equality of~\eqref{E:rema2}}]
 We will prove the first equality of~\eqref{E:rema2} together with
\begin{align}\label{E:21}
\sum_{s\in\A_n\cap(\CMcal{S}_1\cup\CMcal{S}_2\cup\CMcal{S}_3)}
x^{\rep(s)}q^{\max(s)}w^{\ealm(s)}
=\sum_{s\in\CMcal{T}_n\cap\CMcal{J}_1}
x^{\rep(s)}q^{\max(s)}w^{\mpair(s)},
\end{align}
\begin{align}
\label{E:22}\sum_{s\in\A_n\cap\CMcal{S}_4}
x^{\rep(s)}q^{\max(s)}w^{\ealm(s)}
=\sum_{s\in\CMcal{T}_n\cap\CMcal{J}_2}
x^{\rep(s)}q^{\max(s)}w^{\mpair(s)}
\end{align}
by induction on the number $|s|-\max(s)$ for all sequences $s$. For $|s|=\max(s)$, the first equality of~\eqref{E:rema2} is trivial.

Suppose that the triple $(\rep,\max,\ealm)$ on ascent sequences $s$ with $|s|-\max(s)=N-1$ is equidistributed to the triple $(\rep,\max,\mpair)$ on $({\bf2-1})$-avoiding inversion sequences $s$ with $|s|-\max(s)=N-1$. It suffices to prove that
\begin{align*}
&\quad\vert\{s\in\A_n: \ealm(s)=j, \rep(s)=k, \max(s)=p\}\vert\\
&=\vert\{s\in\CMcal{T}_n:\mpair(s)=j, \rep(s)=k, \max(s)=p\}\vert,
\end{align*}
whenever $n-p=N$. In combination of Lemma~\ref{L:c1} and Lemma~\ref{L:d1}, it follows immediately that for all $0\le j<p$,
\begin{align*}
\,\quad &\vert\{s\in \A_n\cap(\CMcal{S}_1\cup\CMcal{S}_2\cup\CMcal{S}_3): \ealm(s)=j,\rep(s)=k, \max(s)=p\}\vert\\
=& \vert\{s\in \CMcal{T}_n\cap\CMcal{J}_1: \mpair(s)=j, \rep(s)=k, \max(s)=p\}\vert.
\end{align*}
In other words, (\ref{E:21}) holds. It remains to show that
\begin{align}\label{E:t3s41}
\,\quad &\vert\{s\in \A_n\cap\CMcal{S}_4:
 \ealm(s)=j,\rep(s)=k, \max(s)=p\}\vert\\
\nonumber=& \vert\{s\in \CMcal{T}_n\cap\CMcal{J}_2:
\mpair(s)=j,\,\rep(s)=k, \,\max(s)=p\}\vert.
\end{align}
By induction hypothesis, Lemma~\ref{L:case3} and Lemma~\ref{L:d4}, we know that
\begin{align*}
\,\quad &\vert\{s\in \A_n\cap \CMcal{P}:
\ealm(s)=j, \rep(s)=k, \max(s)=p+1\}\vert\\
\nonumber=& \vert\{s\in \A_{n-1}:
\,\ealm(s)=j,\,\rep(s)=k, \,\max(s)=p\}\vert\\
\nonumber=& \vert\{s\in \CMcal{T}_{n-1}:
\mpair(s)=j,\rep(s)=k, \max(s)=p\}\vert\\
\nonumber=&\vert\{s\in \CMcal{T}_n\cap \CMcal{F}:
\,\mpair(s)=j,\,\rep(s)=k, \,\max(s)=p+1\}\vert,
\end{align*}
which together with (again by induction hypothesis)
\begin{align*}
\,\quad &\vert\{s\in \A_n:
\,\ealm(s)=j,\,\rep(s)=k, \,\max(s)=p+1\}\vert\\
\nonumber=&\vert\{s\in \CMcal{T}_n:
\,\mpair(s)=j,\,\rep(s)=k, \,\max(s)=p+1\}\vert,
\end{align*}
leads to
\begin{align*}
\,\quad &\vert\{s\in \A_n\cap \CMcal{P}^c:
\,\ealm(s)=j,\,\rep(s)=k, \,\max(s)=p+1\}\vert\\
\nonumber=&\vert\{s\in \CMcal{T}_n\cap \CMcal{F}^c:
\,\mpair(s)=j,\,\rep(s)=k, \,\max(s)=p+1\}\vert.
\end{align*}
In view of Lemmas~\ref{L:c3} and~\ref{L:d2}, (\ref{E:t3s41}) is true. In other words, (\ref{E:22}) is true and the proof is complete.
\end{proof}

\subsection{Further structure of ascent sequences}
We will prove the second equality of~\eqref{E:rema2} via a decomposition  regarding the pair $(\asc,\zero)$ on ascent sequences.
 Let $\CMcal{G}$ be the set of sequences $s\in\A$ such that $\zpair(s)<\zero(s)-1$ and the $(\mathsf{zpair}(s)+1)$-th zero is either the last entry or followed immediately by a zero.
\begin{lemma}\label{L:c4}
There is a bijection $\varphi: \A_{n}\cap \CMcal{G}\rightarrow\A_{n-1}$ such that $\zero(\varphi(s))=\zero(s)-1$, $\asc(\varphi(s))=\asc(s)$ and $\zpair(\varphi(s))=\zpair(s)$.
\end{lemma}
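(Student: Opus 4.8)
The plan is to construct $\varphi$ explicitly, as a zero-analogue of the bijection $\psi$ of Lemma~\ref{L:d4}, by deleting a single zero from $s$. Write $j=\zpair(s)$ (with the convention $\zpair(0,0,\dots,0)=0$) and let $k_0<k_1<\cdots<k_{\zero(s)-1}$ be the positions of the zeros of $s$. Since $s\in\CMcal{G}$ we have $j<\zero(s)-1$, so the $(j+1)$-th zero, at position $k_{j+1}$, exists, and by hypothesis it is either the last letter of $s$ or is immediately followed by a zero. Observing that a zero immediately following the $(j+1)$-th zero would be the $(j+2)$-th zero, these two alternatives coincide with $\zero(s)=j+2$ and $\zero(s)\ge j+3$ respectively, and I define $\varphi(s)$ as follows: in the first case, delete the last letter of $s$ (which is then the $(j+1)$-th zero); in the second case, delete the $(j+2)$-th zero of $s$, namely the letter $s_{k_{j+1}+1}$.

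The first thing to check is that $\varphi(s)\in\A_{n-1}$ and that $\asc(\varphi(s))=\asc(s)$. Both rest on one structural remark: deleting from an ascent sequence a letter equal to $0$ that is either the last letter or immediately preceded by another $0$ leaves the ascent count of every prefix unchanged, and hence preserves every defining inequality $s_i\le\asc(s_1,\dots,s_{i-1})+1$. In our two cases the deleted letter is precisely such a zero, being preceded by the $(j+1)$-th zero in the second case. That $\zero(\varphi(s))=\zero(s)-1$ is immediate.

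The heart of the matter is the identity $\zpair(\varphi(s))=\zpair(s)$. The $j$-th zero of $s$ is untouched and keeps its successor $1$, so $\zpair(\varphi(s))\ge j$; for the reverse inequality I must show that no zero of index exceeding $j$ becomes followed by a $1$ in $\varphi(s)$. In the first case the deleted letter was itself the $(j+1)$-th zero, so $\varphi(s)$ has no zero of index larger than $j$ and there is nothing to check. In the second case the only zero whose successor changes is the $(j+1)$-th one, whose new successor is the former letter $s_{k_{j+1}+2}$ when this letter exists; and here I would invoke the crucial point that $s_{k_{j+1}+2}\neq 1$, because this letter is the successor of the $(j+2)$-th zero of $s$, so $s_{k_{j+1}+2}=1$ would force $\zpair(s)\ge j+2$, contradicting $\zpair(s)=j$. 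Pinning down exactly which zeros change successors and extracting this inequality from the maximality built into $\zpair$ is the step I expect to demand the most care.

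It remains to produce the inverse. Given $t\in\A_{n-1}$, put $i=\zpair(t)$; since $i\le\zero(t)-1$, define $\varphi^{-1}(t)$ to be $t$ with a $0$ appended when $\zero(t)=i+1$, and $t$ with a $0$ inserted immediately after its $(i+1)$-th zero when $\zero(t)\ge i+2$. The same prefix-ascent-count remark shows that this output lies in $\A_n\cap\CMcal{G}$ and carries $(\zero,\asc,\zpair)$ to $(\zero(t)+1,\asc(t),i)$, while the dichotomy on $\zero(t)$ versus $i+1$ matches the one defining $\varphi$; hence $\varphi^{-1}$ is a two-sided inverse of $\varphi$, and $\varphi$ is the claimed bijection.
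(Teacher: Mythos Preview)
Your proof is correct and takes essentially the same approach as the paper's: the paper simply says ``remove the $(j+1)$-th zero of $s$'' in both cases, whereas you remove the $(j+2)$-th zero in the second case---but since the $(j+1)$-th and $(j+2)$-th zeros are adjacent and both equal to $0$, deleting either one yields the same sequence, so the two maps coincide. Your write-up is considerably more detailed than the paper's (which leaves the verification of $\zpair$ and the inverse to the reader), and the details you supply are all sound.
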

\begin{proof}
For any $s\in \A_{n}\cap \CMcal{G}$ with $\mathsf{zpair}(s)=j$, by definition, either the $(j+1)$-th zero is the last entry of $s$ or it is followed immediately by a zero. In both cases, we remove the $(j+1)$-th zero of $s$, which leads to an ascent sequence $s^*=\varphi(s)$ such that $\asc(s^*)=\asc(s)$, $\zero(s^*)=\zero(s)-1$ and $\mathsf{zpair}(s)=\mathsf{zpair}(s^*)$. It is easily seen that $\varphi$ is a bijection.
\end{proof}
\begin{definition}
Let $s$ be an ascent sequence such that $\zero(s)=p$ and $\mathsf{zpair}(s)=j$ ($0\le j< p$). Suppose that for $0\le i\le p-1$, the $i$-th zero of $s$ is located at the $k_{i}$-th position. We call every $\ell$,  $s_{\ell}=1$ and $\ell\ge k_j+2$, a {\em critical zero position} of $s$ and define
\begin{itemize}
\item $\mathsf{zpos}(s)=m+1$ if $m$ is the maximal integer such that the leftmost critical zero position is greater than $k_{m}$;
\item $\mathsf{zpos}(s)=0$ if $s$ has no critical zero positions.
\end{itemize}
For example, $\zpos(0,1,2,0,1,3,2,1,0)=2$ (as the sequence  contains only one critical zero  position at $8$) and $\zpos(0,1,2,0,1,3,2,0)=0$.
\end{definition}


We divide the set  $\tilde{\A}:=\{s\in\A: |s|>\zero(s)\}$ into the following disjoint subsets:
$$
\CMcal{R}_1:=\{s\in\tilde{\A}:  \mathsf{zpos}(s)=0\}\quad\text{and}\quad\CMcal{R}_2:=\{s\in\tilde{\A}:  \mathsf{zpos}(s)\ne0\}.
$$
\begin{lemma}\label{L:c2}
For any $0\le j\le p-1$, it holds that
\begin{align*}
&\,\quad \vert\{s\in \A_n\cap\CMcal{R}_1: \mathsf{zpair}(s)=j, \asc(s)=k,  \zero(s)=p\}\vert\\
&=\vert\{s\in \A_{n-1}: \asc(s)=k-1, \zero(s)=p\}\vert.
\end{align*}
\end{lemma}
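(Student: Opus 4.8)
The plan is to mimic the template of Lemmas~\ref{L:c1} and~\ref{L:d1}. Fix $k$ and $p$, and for $0\le j\le p-1$ set
$$
B_j:=\{s\in\A_n\cap\CMcal{R}_1:\zpair(s)=j,\ \asc(s)=k,\ \zero(s)=p\}.
$$
It suffices to prove that $|B_i|=|B_{i+1}|$ for every $0\le i<p-1$ and that $|B_{p-1}|=|\{s\in\A_{n-1}:\asc(s)=k-1,\ \zero(s)=p\}|$: the first gives $|B_j|=|B_{p-1}|$ for all $j$, and then the second concludes the proof.

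For the equicardinality $|B_i|=|B_{i+1}|$ I would build an explicit bijection $B_i\to B_{i+1}$ that ``slides'' the copy of $1$ sitting immediately after the $i$-th zero past the $(i+1)$-th zero. Given $s\in B_i$: delete the entry $1$ immediately following the $i$-th zero (its presence is forced by $\zpair(s)=i$), decrease by $1$ every entry lying strictly between the $i$-th and the $(i+1)$-th zeros (each such entry is $\ge2$, since $\zpos(s)=0$ forbids a $1$ beyond position $k_i+1$), and insert a new entry $1$ immediately after the $(i+1)$-th zero. This is the analogue of the maps depicted in Figures~\ref{F:b1} and~\ref{F:b2}, and as there one splits into cases according to whether $s\in\CMcal{G}$ (the $(i+1)$-th zero is the last entry or is immediately followed by a zero) or $s\in\A\setminus\CMcal{G}$; the reason for the split is that the inverse must recognize which case produced a given element of $B_{i+1}$, and it does so by inspecting whether the $(i+1)$-th zero is immediately preceded by a zero. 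One then verifies: the image is again an ascent sequence (deleting the $1$ lowers the ascent budget by exactly $1$ on the affected range, so the constraint $s_\ell\le\asc(s_1,\ldots,s_{\ell-1})+1$ is precisely what is needed after relabeling); $\zero$ and $\asc$ are unchanged (the deletion kills one ascent, the insertion creates one); $\zpos$ remains $0$; and $\zpair$ rises from $i$ to $i+1$.

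For $|B_{p-1}|$ I would argue directly: if $s\in B_{p-1}$, the last zero of $s$ is immediately followed by a $1$, and $\zpos(s)=0$ forces every entry past that $1$ to be $\ge2$. Delete that $1$ and decrease each subsequent entry by $1$; the ascent-sequence inequality for $s$ at every position beyond the deleted one is exactly the inequality required of the image, so the result lies in $\A_{n-1}$, has one fewer ascent and still exactly $p$ zeros, and the transformation is reversed by reinserting a $1$ after the last zero and incrementing the tail. This yields the desired equality. I expect the main obstacle to be the $B_i\leftrightarrow B_{i+1}$ step: choosing the local relabeling so that the image is simultaneously an ascent sequence and still satisfies $\zpos=0$, and arranging the $\CMcal{G}$-based case analysis cleanly enough that invertibility is transparent — here the ascent constraints, which came for free for the maximal entries of $({\bf2-1})$-avoiding inversion sequences because those sit in the initial increasing run, must instead be checked by hand.
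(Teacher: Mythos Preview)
Your core map $B_i\to B_{i+1}$ is precisely the paper's, and reducing to $|B_{p-1}|$ (modifying only the tail past the last zero) works just as well as the paper's choice $|B_0|$ (delete the unique entry $1$ and decrease every nonzero value by one). The correction concerns the $\CMcal{G}$-based case split you anticipate as the chief obstacle: it is both unnecessary and, as stated, internally inconsistent. The rule ``delete the $1$ after the $i$-th zero, decrement each entry strictly between the $i$-th and $(i+1)$-th zeros, insert a $1$ after the $(i+1)$-th zero'' applies uniformly to all of $B_i$, and its inverse is simply the same rule read backwards; the hypothesis $\zpos=0$ on both sides guarantees that no stray $1$'s appear, so no case analysis is needed. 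Moreover, the two dichotomies you propose do not match: the forward split asks what follows the $(i+1)$-th zero of $s$, whereas your inverse test (is the $(i+1)$-th zero of $s^*$ preceded by a zero?) actually detects whether $k_{i+1}=k_i+2$ in $s$ --- for instance $s=(0,1,2,0,0,2)\in B_0\cap\CMcal{G}$ maps to $s^*=(0,1,0,1,0,2)$, whose zero at position $3$ is preceded by a $1$, not a $0$. The analogy with Lemma~\ref{L:d1} is what misleads you here: the two constructions in Figures~\ref{F:b1}--\ref{F:b2} genuinely differ because maximal values depend on position, whereas all zeros carry the same value, so a single uniform move suffices and your ``main obstacle'' dissolves.
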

\begin{proof}
For every $0\le j\le p-1$, let
\begin{align*}
B_j=\{s: s\in \A_n\cap\CMcal{R}_1,\, \mathsf{zpair}(s)=j,\,\asc(s)=k,\,\zero(s)=p\},
\end{align*}
then we want to prove that $\vert B_i\vert=\vert B_{i+1}\vert$ for any $0\le i<p-1$. To this end, we construct a one-to-one correspondence between the sets $B_i$ and $B_{i+1}$. For any $s\in B_i$, let $y$  be the entry (if any) that is right after the $(i+1)$-th zero.

The sequence $s^*$ is constructed from $s$ as follows: remove the integer $1$ right after the $i$-th zero, replace  $d$ by $d-1$ for all $d$ between $i$-th zero and $(i+1)$-th zero, and insert $1$ right after the $(i+1)$-th zero; see Fig.~\ref{F:c1}. It follows that
\[
\begin{array}{rlllll}
&\mathsf{zpos}(s)=\mathsf{zpos}(s^*)=0, &\mathsf{asc}(s)=\mathsf{asc}(s^*),\\
&\mathsf{zero}(s)=\mathsf{zero}(s^*), & \mathsf{zpair}(s)=\mathsf{zpair}(s^*)-1.
\end{array}
\]

\begin{figure}[ht]
\centering
\includegraphics[scale=1.0]{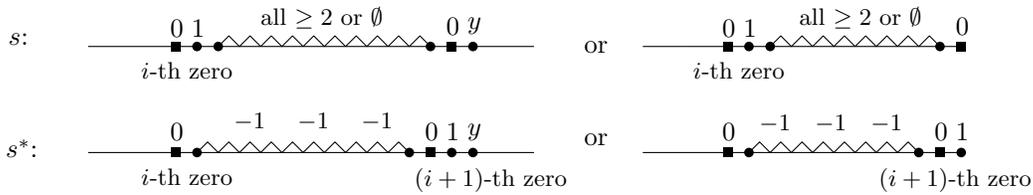}
\caption{The bijection between the sets $B_i$ and $B_{i+1}$ if $y
\neq 1$ exists in $s$ or the $(i+1)$-th zero is the last entry of $s$, where all zeros are marked with  squares. \label{F:c1}}
\end{figure}
In consequence, $s^*\in B_{i+1}$. The map $s\mapsto s^*$ is reversible, because for $s^*\in B_{i+1}$, by definition the $(i+1)$-th zero is followed immediately by an integer $1$ and if this $1$ is the last entry, then $s^*$ must come from the construction shown on the right of Fig.~\ref{F:c1}; otherwise $s^*$ comes from the construction shown on the left of Fig.~\ref{F:c1}. This implies that $s\mapsto s^*$ is a bijection and $|B_i|=|B_{i+1}|$ for all $0\le i<p-1$. Hence $|B_i|=|B_{0}|$ for all $0\le i<p$. It remains to count $|B_0|$. Removing the unique entry $1$ and replacing nonzero $d$ by $d-1$ for all sequences in $B_0$, yields all sequences $s\in\A_{n-1}$ such that $\asc(s)=k-1$ and $\zero(s)=p$, completing the proof.
\end{proof}
\begin{lemma}\label{L:T3}
Let  $\CMcal{G}^c=\A\setminus\CMcal{G}$. There is a bijection
\begin{align*}
\theta: \{(s,i):s\in \A_n\cap \CMcal{G}^c, i<\mathsf{zpair}(s)\}\rightarrow \{s\in \A_n\cap\CMcal{R}_2: \mathsf{zpair}(s)=i\}
\end{align*}
such that if $\hat{s}=\theta(s,i)$, then $\asc(\hat{s})=\asc(s)$ and $\zero(\hat{s})=\zero(s)-1$ .
\end{lemma}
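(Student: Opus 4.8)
The plan is to prove Lemma~\ref{L:T3} by realizing $\theta$ as a cascade of elementary moves on ascent sequences, designed so that the argument runs in parallel with the construction of $\vartheta$ in Lemma~\ref{L:d2}: zeros will play the role that maximal entries played there, the triple $(\asc,\zero,\zpair)$ the role of $(\rep,\max,\mpair)$, and $\CMcal{G},\CMcal{R}_2,\zpos$ the roles of $\CMcal{F},\CMcal{J}_2,\mpos$. The individual moves will not be literal duals of the $\CMcal{M}_i$'s -- as the remark after Theorem~\ref{T:main3} already signals, the second equality of~\eqref{E:rema2} is not induced by a value-reversing bijection -- but each will be of the kind already used in the proof of Lemma~\ref{L:c2}: deletion/insertion of a letter $1$ next to a zero, together with a relabelling $d\mapsto d-1$ of the intervening entries, arranged so as to preserve $\asc$.

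First I would fix $s\in\A_n\cap\CMcal{G}^c$ with $\zero(s)=p+1$, $\zpair(s)=j$, a target index $i$ with $0\le i<j\le p$, and write $k_0<k_1<\cdots<k_p$ for the positions of the zeros of $s$, so that $s_{k_j+1}=1$ by definition of $\zpair$. Because $s\notin\CMcal{G}$, either $j=p$ or the $(j+1)$-th zero is immediately followed by a letter $\ge 2$; exactly as in Lemma~\ref{L:d2}, this is the feature that will make $\theta$ recursively invertible. The initial move $\CMcal{N}_0$ acts on the block of $s$ delimited by the $(j-1)$-th and $j$-th zeros: it removes the $j$-th zero and reinserts a $1$ just after the $(j-1)$-th zero, the relabelling being chosen so that $\asc$ is unchanged and so that the $1$ that previously followed the $j$-th zero becomes the leftmost critical zero position of the output $\hat s^{(1)}$. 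Thus $\hat s^{(1)}\in\A_n\cap\CMcal{R}_2$ with $\zpos(\hat s^{(1)})=j$, $\zpair(\hat s^{(1)})=j-1$, $\asc(\hat s^{(1)})=\asc(s)$ and $\zero(\hat s^{(1)})=p$. If $i=j-1$ we set $\theta(s,j-1)=\hat s^{(1)}$ and stop; otherwise we iterate, applying to $\hat s^{(t)}$ a move $\CMcal{N}_1$ when the two zeros currently being processed are adjacent and a move $\CMcal{N}_2$ otherwise -- the counterparts of $\CMcal{M}_1,\CMcal{M}_2$ -- each of which lowers $\zpair$ by one, leaves $\asc$, $\zero$ and membership in $\CMcal{R}_2$ unchanged, and updates $\zpos$ so that the next step is forced. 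After $j-i$ moves we declare $\theta(s,i):=\hat s^{(j-i)}$, which has $\zpair=i$, $\asc(\hat s^{(j-i)})=\asc(s)$ and $\zero(\hat s^{(j-i)})=\zero(s)-1$.

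For injectivity and surjectivity I would replay the trichotomy at the end of the proof of Lemma~\ref{L:d2}. Given $\hat s\in\A_n\cap\CMcal{R}_2$ with $\zpair(\hat s)=i$, exactly one of the cases (i)~$\hat s\in\CMcal{G}$, (ii)~$\hat s\in\CMcal{G}^c$ and $\zpos(\hat s)=i+1$, (iii)~$\hat s\in\CMcal{G}^c$ and $\zpos(\hat s)\ne i+1$ holds, and these respectively mark $\hat s$ as the output of $\CMcal{N}_1$, of $\CMcal{N}_0$, or of $\CMcal{N}_2$; undoing one move and recursing, and halting precisely when case (ii) calls for $\CMcal{N}_0^{-1}$, recovers a unique pair $(s,i)$ with $s\in\A_n\cap\CMcal{G}^c$ and $i<\zpair(s)$. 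Hence $\theta$ is the asserted bijection.

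The step I expect to be the real obstacle is the same one that makes Lemma~\ref{L:d2} delicate: verifying that every intermediate sequence is an \emph{ascent} sequence rather than merely an inversion sequence. Removing a zero that is followed by a $1$ destroys an ascent; the relabellings $d\mapsto d-1$ shift the value of $\asc(s_1\cdots s_{\ell-1})$ for all later $\ell$; and reinserting a $1$ after a zero creates an ascent -- so the inequality $s_\ell\le\asc(s_1\cdots s_{\ell-1})+1$ has to be tracked through each move. This forces a short case analysis according to whether the letters flanking the two zeros being processed equal $0$, equal $1$, or are $\ge 2$; that same case split is what pins down the precise relabellings inside $\CMcal{N}_0,\CMcal{N}_1,\CMcal{N}_2$ and what makes the $\zpos$-bookkeeping (hence the inverse trichotomy) come out correctly. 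Once these verifications are carried through, the remainder of the proof is parallel to the already-established Lemmas~\ref{L:d2} and~\ref{L:c2}, and it supplies exactly the step needed to complete the second equality of~\eqref{E:rema2}.
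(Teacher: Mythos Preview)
Your proposal follows essentially the same architecture as the paper's proof: the paper constructs $\theta$ via a cascade of moves $\CMcal{Z}_0,\CMcal{Z}_1,\CMcal{Z}_2$ (your $\CMcal{N}_0,\CMcal{N}_1,\CMcal{N}_2$), splitting on whether the two zeros being processed are adjacent, and recovers the inverse via exactly the $(\CMcal{G},\zpos)$-trichotomy you write down. The only difference is that the paper spells out the moves explicitly (for instance, $\CMcal{Z}_0$ increments every entry strictly between the $(j{-}1)$-th and $j$-th zeros by~$1$, deletes the $j$-th zero, and inserts a $1$ right after the $(j{-}1)$-th zero), whereas you leave these as ``the relabelling chosen so that $\asc$ is unchanged''; your outline is otherwise on target.
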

\begin{proof}
Let $s\in\A_n\cap\CMcal{G}^c$ be a sequence with $\mathsf{zpair}(s)=j$ and $\mathsf{zero}(s)=p+1$, then either the $(j+1)$-th zero is followed immediately by an integer $y$ such that $y\ge 2$ or the $j$-th zero is the last zero. For every pair $(s,i)$ where $0\le i<j\le p$, we start with constructing a new sequence $\hat{s}^{(1)}$ from the pair $(s,j-1)$. We assume that  the $\ell$-th  ($0\leq\ell\leq p$) zero of $s$ is located at the $k_{\ell}$-th position. The sequence $\hat{s}^{(1)}$ is constructed by the following step $\CMcal{Z}_0$ (see Fig.~\ref{F:1}):
\begin{itemize}
\item replace $s_i$ by $s_i+1$ for all $k_{j-1}<i<k_j$;
\item remove the zero on the $k_j$-th position;
\item add the integer $1$ right after the $(j-1)$-th zero.
\end{itemize}
Clearly the leftmost critical zero position of $\hat{s}^{(1)}$ is $(k_j+1)$. Hence $\hat{s}^{(1)}\in \A_n\cap\CMcal{R}_2$ with
\[
\begin{array}{rlllll}
&\mathsf{zpair}(\hat{s}^{(1)})=j-1=\mathsf{zpair}(s)-1, \,\,& \mathsf{zero}(\hat{s}^{(1)})=\mathsf{zero}(s)-1,\\
&\mathsf{zpos}(\hat{s}^{(1)})=j=\mathsf{zpair}(s),&
\mathsf{asc}(\hat{s}^{(1)})=\mathsf{asc}(s).
\end{array}
\]
\begin{figure}[htbp]
\centering
\includegraphics[scale=1.0]{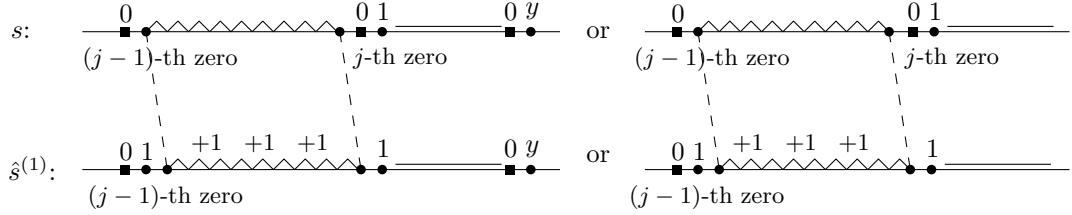}
\caption{The bijection from the pair $(s,j-1)$ to $\hat{s}^{(1)}$ if $y\ge 2$ or the $j$-th zero is the last zero, where all zeros are marked with  squares.\label{F:1}}
\end{figure}

If $i=j-1$, we stop and set $\hat{s}=\theta(s,j-1)=\hat{s}^{(1)}$; otherwise $i<j-1$ and we continue with the pair $(\hat{s}^{(1)},j-2)$. We distinguish two cases.

If $k_{j-2}+1=k_{j-1}$, then $\hat{s}^{(2)}$ is constructed by the following step $\CMcal{Z}_1$ (see Fig.~\ref{F:2}):
\begin{itemize}
\item remove the $(j-1)$-th zero;
\item insert a zero right before $y$ if $(j-1)$-th zero is not the last zero; insert a zero as the last entry, otherwise.
\end{itemize}
It is clear that for both cases $\hat{s}^{(2)}\in\A_n\cap\CMcal{R}_2$ with
\[
\begin{array}{rlllll}
&\mathsf{zpair}(\hat{s}^{(2)})=j-2=\mathsf{zpair}(s)-2, & \mathsf{zero}(\hat{s}^{(2)})=\mathsf{zero}(s)-1,\\
&\mathsf{zpos}(\hat{s}^{(2)})=j-1=\mathsf{zpair}(s)-1,&
\mathsf{asc}(\hat{s}^{(2)})=\mathsf{asc}(s).
\end{array}
\]
\begin{figure}[ht]
\centering
\includegraphics[scale=1.0]{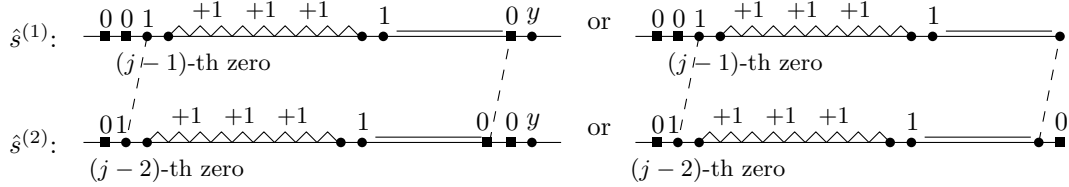}
\caption{The map $\hat{s}^{(1)}\mapsto\hat{s}^{(2)}$ if $k_{j-2}+1=k_{j-1}$, where $y\ge 2$ or the $(j-1)$-th zero is the last zero, where all zeros are marked with squares.\label{F:2}}
\end{figure}

If $k_{j-2}+1<k_{j-1}$, then $\hat{s}^{(2)}$ is constructed by the following step $\CMcal{Z}_2$ (see Fig.~\ref{F:3}):
\begin{itemize}
\item replace $s_i$ by $s_i+1$ for all $k_{j-2}<i<k_{j-1}$;
\item move the integer $1$ that is on the $(k_{j-1}+1)$-th position to the $(k_{j-2}+1)$-th position if an integer more than $1$ is located at the $(k_{j-1}+2)$-th position; move two integers $0,1$ that are on the $k_{j-1}$-th and the $(k_{j-1}+1)$-th positions respectively to the $k_{j-2}$-th and the $(k_{j-2}+1)$-th positions, otherwise.
\end{itemize}
It is clear that $\hat{s}^{(2)}$ is an ascent sequence in $\A_n\cap\CMcal{R}_2$ with
\[
\begin{array}{rlllll}
&\mathsf{zpair}(\hat{s}^{(2)})=j-2=\mathsf{zpair}(s)-2, \,\,& \mathsf{zero}(\hat{s}^{(2)})=\mathsf{zero}(s)-1,\\
&\mathsf{zpos}(\hat{s}^{(2)})=j=\mathsf{zpair}(s),&
\mathsf{asc}(\hat{s}^{(2)})=\mathsf{asc}(s).
\end{array}
\]
\begin{figure}[ht]
\centering
\includegraphics[scale=1.0]{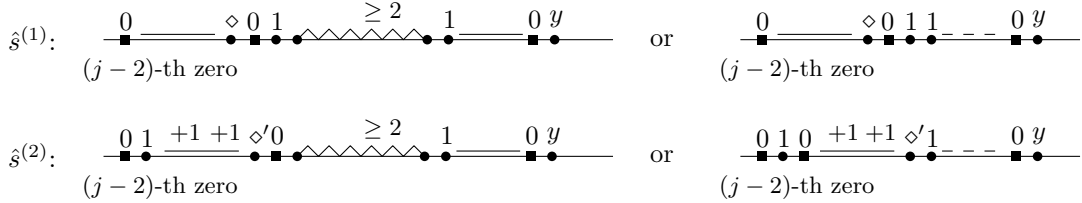}
\caption{The map $\hat{s}^{(1)}\mapsto\hat{s}^{(2)}$ if $k_{j-2}+1<k_{j-1}$, where $y\ge 2$, $\diamond'=\diamond+1\ge 2$ and all zeros are marked with squares. We omit the picture if the $(j-1)$-th zero is the last zero.\label{F:3}}
\end{figure}

If $i=j-2$, we stop and set $\hat{s}=\theta(s,j-2)=\hat{s}^{(2)}$; if $i<j-2$, we continue with pair $(\hat{s}^{(2)},j-3)$. If $k_{j-3}+1=k_{j-2}$, we repeat step $\CMcal{Z}_1$; otherwise we repeat step $\CMcal{Z}_2$. We continue this process until we reach $i$ and we set $\hat{s}=\theta(s,i)=\hat{s}^{(j-i)}$ where $|\hat{s}|=|s|$, $\mathsf{zpair}(\hat{s})=i$, $\zero(\hat{s})=\zero(s)-1$ and $\asc(\hat{s})=\asc(s)$.

The construction $\theta$ is reversible. For any sequence $\hat{s}\in\A_n\cap\CMcal{R}_2$ with $\mathsf{zpair}(\hat{s})=i$, if $\hat{s}\in\CMcal{G}$, then $\hat{s}$ is produced from step $\CMcal{Z}_1$; otherwise if $\hat{s}\in \CMcal{G}^c$ and $\mathsf{zpos}(\hat{s})=i+1$, then $\hat{s}$ is produced from step $\CMcal{Z}_0$. If $\hat{s}\in \CMcal{G}^c$ and $\mathsf{zpos}(\hat{s})\ne i+1$, then $\hat{s}$ is produced from step $\CMcal{Z}_2$.  More precisely, if the $i$-th zero of $\hat{s}$ is followed by $1\,0$, then we use the construction shown on the right of Fig.~\ref{F:3} to reverse $\theta$; if the $i$-th zero of $\hat{s}$ is followed by $1\,x$, with $x\ge 2$, then we use the construction shown on the left of Fig.~\ref{F:3} to reverse $\theta$. This implies that $\theta$ is recursively reversible, therefore $\theta$ is a bijection. Hence the proof is complete.
\end{proof}
\begin{example}
For $s=(0,1,2,0,0,1,2,4,1,2,0,2)$ with $\mathsf{zpair}(s)=2$ and $i=0$, under the bijection $\theta$ we obtain $\hat{s}^{(1)}=\theta(s,1)=(0,1,2,0,1,1,2,4,1,2,0,2)$ and
$\hat{s}^{(2)}=\theta(s,0)=(0,1,0,2,3,1,2,4,1,2,0,2)$.
\end{example}


\begin{proof}[{\bf Proof of the second equality of~\eqref{E:rema2}}]
Using Lemmas~\ref{L:c4},~\ref{L:c2} and~\ref{L:T3}, we
 can prove 
\begin{align*}
\sum_{s\in\A_n\cap(\CMcal{S}_1\cup\CMcal{S}_2\cup\CMcal{S}_3)}
x^{\rep(s)}q^{\max(s)}w^{\ealm(s)}
&=\sum_{s\in\A_n\cap\CMcal{R}_1}
x^{\asc(s)}q^{\zero(s)}w^{\mathsf{zpair}(s)},\\
\sum_{s\in\A_n\cap\CMcal{S}_4}
x^{\rep(s)}q^{\max(s)}w^{\ealm(s)}
&=\sum_{s\in\A_n\cap\CMcal{R}_2}
x^{\asc(s)}q^{\zero(s)}w^{\mathsf{zpair}(s)}
\end{align*}
by induction on the number $|s|-\max(s)$ for all ascent sequences $s$.  We omit the details of the discussions, since they are similar to those in the proof of the first equality of~\eqref{E:rema2}.
\end{proof}

\section*{Recent developments} 

 A proof of Conjecture~\ref{conj1ref}  using Theorem~\ref{T:gen}  and the machinery
of basic hypergeometric series was recently found and will be featured in a separate paper~\cite{js}. It follows from Corollary~\ref{zero:max} and Theorem~\ref{bij:sym} that the pair $(\rmin,\zero)$ is symmetric on $\A_n$, which has an alternative proof provided by Chen, Yan and Zhou~\cite{cyz}.

\section*{Acknowledgements}

We thank the referees for carefully reading the paper and providing insightful comments and suggestions.

Fu was supported by the National Science Foundation of China  grant~11501061 and the Fundamental Research Funds for the Central Universities No.~2018CDXYST0024.

Jin gratefully acknowledges supports from the German Research Foundation DFG, JI 207/1-1, the Austrian Research Fund FWF, project SFB F50-02/03 and grant P 32305, and FWF-MOST (Austria-Taiwan) project I~2309-N35.

Lin was supported by the National Science Foundation of China grants 11871247 and 11501244,   the project of Qilu Young Scholars of Shandong University and the Austrian Research Fund FWF, START grant Y463 and SFB grant F50-10. 

Yan was supported by the National Science Foundation of China  grant 11671366. 

Zhou was supported by the National Science Foundation of China  grants 11801378 and 11626158, and the Zhejiang Provincial Natural Science Foundation of China No.~LQ17A010004.

\end{document}